\newcommand{\id}{\mathds{1}}
\newcommand{\comment}[1]{}
\def\dist{{\rm dist}\,}
\def\e{\mathrm{e}}
\def\gs{\psi_{E^\eps_{s},s}}
\def\gsv{\vec{\psi}_{E^\eps_{s},s}}
\def\gsvs{\vec{\psi}_{E^\eps_{s'},s'}}
\newcommand\japx {\langle x \rangle}
\def\jx4{\langle x \rangle^{-4}}
\def\Links{\tagsleft@true}\def\Rechts{\tagsleft@false}
\def\dist{\vec{\phi}_s}
\def\distt{\tilde{\vec{\phi}}_s}
\def\dists{\tilde{\vec{\phi}}_{s'}}
\def\Ml{M^{(\mathrm{l})}_{s}}
\def\M{M^{(\mathrm{g})}_{s}}
\def\Mlt{M^{(\frac{3}{2},\mathrm{l})}_{s}}
\def\Mltn{M^{(\frac{3}{2},\mathrm{l})}_{s_{0}}}
\def \jte{\langle \frac s \eps \rangle}
\def \jse{\langle \frac {s'}{ \eps} \rangle}
\def \jtse{\langle \frac{s-s'}{\eps} \rangle}
\def \H{\|_{H^2}}
\def \Hl{\|_{H^{2,l}}}
\def \Hls{\|_{H^{2,-\sigma}}}
\def \Bsl{$(\mathrm{B}_{\mathrm{l}})$}
\def \Bs{$(\mathrm{B}_{\mathrm{g}})$}
\numberwithin{equation}{section}
\def\R{\mathbb R}
\def\C{\mathbb C}
\def\i{{\mathrm i}}
\def\e{{\mathrm e}}
\def\epsilon{\varepsilon}
\def\eps{\varepsilon}
\def\V{V_{s}}
\numberwithin{equation}{section}
\numberwithin{figure}{section}
\newtheorem{thm}{\bfseries Theorem}[section]
\newtheorem{lem}[thm]{\bfseries Lemma}
\newtheorem{cor}[thm]{\bfseries Corollary}
\newtheorem{prop}[thm]{\bfseries Proposition}
\newtheorem*{cor*}{\bfseries Corollary}
\newcommand{\pagenumberstyle}{}
\newcommand{\markstyle}{\small \scshape \MakeLowercase}
\title{An Adiabatic Theorem for the Gross-Pitaevskii Equation}
\author{Zhou Gang\footnote{gzhou@caltech.edu, partly supported by NSF grant DMS-1308985 and DMS-1443225.}\ \ and Philip Grech\footnote{pgrech@ethz.ch, the main part of this work has been carried out at the Institute for Theoretical Physics, ETH Zurich, 8093 Zurich, Switzerland}}
\begin{document}

\maketitle
\centerline{$^\star$ Department of Mathematics, California Institute of Technology, Mail Code 253-37, Pasadena, U.S.A. 91106}
\centerline{$^\dagger$Department of Management, Technology and Economics, ETH Zurich, 8092, Zurich, Switzerland
}
\begin{abstract}
We prove an adiabatic theorem for the non-autonomous Gross-Pitaevskii equation in the case of a weak trap. More precisely, we assume that the external potential decays suitably at infinity and admits exactly one bound state. \color{black}
\end{abstract}

\fancypagestyle{plain}{%
\fancyhf{} 
\fancyfoot[C]{\pagenumberstyle \thepage} 
\renewcommand{\headrulewidth}{0pt}
}

\pagestyle{plain}

{
\tableofcontents
}

\pagestyle{fancy}
\fancyhead{} 
\fancyfoot{}
\fancyhead[CE]{\markstyle \leftmark} 
\fancyhead[CO]{\markstyle \rightmark} 
\fancyhead[LE,RO]{\pagenumberstyle \thepage}
\renewcommand{\headrulewidth}{0pt}

\renewcommand{\sectionmark}[1]{
\markright{\MakeUppercase \thesection.\ #1}
}

\fancypagestyle{plain}{%
\fancyhf{} 
\fancyfoot[C]{\pagenumberstyle \thepage} 
\renewcommand{\headrulewidth}{0pt}
}

\newpage
 \label{chap4}
 
\section{Introduction}

Quantum adiabatic theory has been initiated with the study of the non-autonomous Schr\"odinger equation 
\begin{align}
\i\eps\partial_{s}\psi_{s}=H_{s}\psi_{s} \label{linschr}
\end{align}  
in the limit $\eps \searrow 0$. Here, $H_{s}$ is a time-dependent self-adjoint Hamiltonian and the macroscopic time variable $s$ is assumed to take values in $[0,1]$. The first adiabatic theorem was discovered by Born and Fock \cite{BoFo} in 1928 who treated the case where $H_{s}$ has a simple eigenvalue which remains isolated from the rest of the spectrum at all times. 

Since that time a wide range of generalizations have been found. Some authors considered the case of isolated yet degenerate eigenvalues or isolated energy bands \cite{K2, ASY87, ASY93}. Others were concerned with the development of superadiabatic expansions which approximate the solution of (\ref{linschr}) with exponential accuracy in $\eps$ \cite{Nen92, JP1}; much like in the well-known Landau-Zener Formula \cite{Landau, Zener}. It has also been found that an adiabatic theory can even be given for non-isolated eigenvalues at the cost of having no information on the rate of convergence as $\eps \searrow 0$ \cite{AE99, Bo}. Many of these theorems have later been further generalized to non-self-adjoint Hamiltonians which arise for instance in non-equilibrium statistical mechanics \cite{joye, nenras, WKAstat, afgg2}. 

The present article takes a slightly different approach and studies a {\itshape nonlinear} example of a quantum adiabatic theorem. More precisely, we consider the non-autonomous Gross-Pitaevskii equation with a time-dependent potential $V_{s}=V_{s}(x)$,
\begin{align} \label{eq_GP}
 \i \varepsilon\partial_s \Psi_s = -\Delta\Psi_s+V_s\Psi_s+
b|\Psi_s|^2\Psi_s \, ,
\end{align}
where $b=\pm1$ (focusing resp. defocusing nonlinearity). Equation~(\ref{eq_GP}) constitutes an effective description for the dynamics of a Bose-Einstein condensate with one-particle wave function $\Psi_{s}$ in an external trap $V_{s}$. It can be rigorously derived from the underlying many-body Schr\"odinger dynamics in the limit of large particle numbers $N \to \infty$ if the interaction potential between the particles is scaled suitably with $N$ (see e.g. \cite{Sch09, Pi10} and references therein). However, it is worthwhile to note that such results are not uniform in the macroscopic time $t:=s/\eps$ and hence we will simply take Equation~(\ref{eq_GP}) as our starting point: Issues concerning the interchangeability of adiabatic and particle number limit will not be addressed here.

To give an informal explanation of our main theorem we introduce the stationary pendant to \eqref{eq_GP} which reads
\begin{align} \label{eq_GPtindep}
-\Delta \psi_{E,s}+V_{s}\psi_{E,s}+E\psi_{E,s}+b|\psi_{E,s}|^{2}\psi_{E,s}=0\, .
\end{align}
Its solutions are referred to as \emph{ground states} since they solve the Euler-Lagrange equation for the Gross-Pitaevskii energy functional
\begin{align}
I[\psi_{E,s}]:= \int_{\R^{3}} d^{3}x \ \left(\frac{1}{2}|\nabla \psi_{E,s}|^{2} +  V_{s}|\psi_{E,s}|^{2}+\frac{b}{4} |\psi_{E,s}|^{4} \right),
\end{align}
where $\|\psi_{E,s}\|_{2}^{2}=\eta$ is fixed. We assume that the initial data $\Psi_{0}$ for (\ref{eq_GP}) is small in a suitable sense (equivalently, $\Psi_{0}$ could be rescaled at the price of choosing the parameter $b$ to be small instead). In addition, we assume that the linear Hamiltonian \mbox{$-\Delta+ V_{s}$} admits exactly one bound state for each $s$, that is, the trapping potential $V_{s}$ is supposed to be weak. After adding the nonlinearity this bound state bifurcates into a whole manifold of ground states as will be shown below.

Our result can now be described as follows: Under the assumption that $\Psi_{0}$ belongs to the ground state manifold we prove that, up to phase and uniformly in $s$,  $\Psi_{s}$ converges to an element in the ground state manifold with equal mass, i.e. $L^2-$norm of the solution, as $\varepsilon \searrow 0$. In fact, the error term will be $\mathcal{O}(\eps)$ and is thus reminiscent of linear adiabatic theorems in presence of a gap condition.

From a physical perspective the existence of an adiabatic theorem for the Gross-Pitaevskii equation is to be expected and has been observed in interference experiments \cite{Anderson, Orzel}. Mathematically however, the non-autonomous setting considered here - contrary to the autonomous case (e.g. \cite{Soffer:1990vb, WeinsteinII, yautsai, froehlich, weinsteinIII}) - has not yet been subject to intensive investigations: A \emph{space}-adiabatic theorem was found in \cite{Salem08}. The result closest to ours is \cite{Sparber}, where the equation of the form 
\begin{align*}
 \i \varepsilon\partial_s \Psi_s = -\Delta\Psi_s+V_s\Psi_s+
b\varepsilon |\Psi_s|^{2\sigma}\Psi_s \, ,
\end{align*} was considered. Here the nonlinearity is of the size $\varepsilon$, which goes to zero as $\varepsilon$ tends to zero.

Interestingly, the techniques we apply to prove our theorem differ from the ones commonly used in the linear case. The main difficulty is that by linearizing the Gross-Pitaevskii equation around a ground state one obtains a generator which does no longer give rise to a contracting evolution on $L^{2}(
\mathbb{R}^{3})$ and therefore makes it more involved to estimate error terms. This problem can be dealt with by a bootstrap argument which uses the dispersive behavior of the {linear} Schr\"odinger equation; see e.g. \cite{Schlag, Goldberg:2006wz}. For the related nonlinear problems, see the results in \cite{BusP1992, RoWe, TsaiYau02, MR1992875, BuSu, GS07, G1, Cuccagna:03, CuccKirr, Cucca08, SW-PRL:05, NakanTsai12, ZwHo07, Cuccagna:15}.

The organization of this article is as follows. We start in Section~\ref{exgsmfd} we establish the existence and regularity of a ground state manifold for Equation \eqref{eq_GPtindep}. This enables us to give a precise statement of the main theorem in Section~\ref{chap4:mainthm}. After studying, in Section \ref{subsec_linop}, various properties of the linearized operator, obtained by linearizing around the ground state, we reformulate the main Theorem into Theorem \ref{refor} in Section \ref{sec:refor}. Theorem \ref{refor} will be proved in Sections \ref{chap4:main} and \ref{sec:full}.
Various technical estimates will be in Appendices. 

Throughout the paper we use the standard notation for the weighted Sobolev spaces $$H^{2,\sigma}(\R^{3}):=\{ \phi: \R^{3}\to \C |  \|\phi\|_{H^{2,\sigma}} := \| \japx^{\sigma}\phi \H < \infty \},\ \ \japx := \sqrt{1+|x|^{2}}.$$

\section{Ground state manifold: existence and regularity} \label{exgsmfd}
\subsection{Hypotheses on the potential}

We start with the general assumptions for the potential $V_{s}$. 

{\emph{
\begin{description} 
\item [$(\mathrm{H}_{\mathrm{d}})$]  The potential $V_{s}(x),\ s\in [0,1]$ and $x\in \mathbb{R}^3$, satisfies $V_{\cdot}\in C([0,1];H^{2,\sigma}(\R^{3}))\cap C^{2}([0,1];L^{\infty}(\R^{3})) $ for a $\sigma > 2$. 
\item[$(\mathrm{H}_{\mathrm{e}})$]  {For every $s\in [0,1]$, $-\Delta+V_{s}$ admits exactly one eigenstate $v_{*,s}$, with eigenvalue $-E_{*,s}<0$ separated from the rest of the spectrum of $-\Delta+V_{s}$, by a margin uniformly in $s$: there is $G_{0}>0$ such that $E_{*,s}\geq G_{0}$ for all $s$.}
\item[$(\mathrm{H}_{\mathrm{r}})$]  For every $s \in [0,1]$, $V_{s}$ admits no zero energy resonance, that is, the equation 
\begin{align*}
\left(-\Delta + V_{s}  \right) g=0
\end{align*}
 admits no distributional solution $g\notin L^{2}(\R^{3})$ such that $\japx^{-\beta}g \in L^{2}(\R^{3})$ for every $\beta> 1/ 2$. 
\end{description}	
}}


\subsection{Ground state manifold}
We present the proposition to establish the existence of a curve of constant mass in the manifold of instantaneous stationary states for Equation \eqref{eq_GP}. More generally, our result yields a differentiable manifold of nonlinear ground states. Before stating it we introduce some notation. By
\begin{align}
P_{H_{s}}^d&:= \ket{v_{*,s}}\bra{v_{*,s}}\, , \label{Pdisc} \\ 
P_{H_{s}}^c&:=\id -P_{H_{s}}^d \label{Pcont}
\end{align}
we denote the spectral projections onto the eigenvector space of $-\Delta +V_{s}$ and its orthogonal complement. Moreover, we declare that a subindex in Landau's $\mathcal{O}$-symbol denotes the space in which the statement is to be understood.

\begin{prop}\label{groundstatefamily} {~} For any fixed $s\in [0,1]$, $\eta \ll1$, $0\leq({E_{*,s}}-E)/{b} \ll 1$  and $l \in \R$, we have
\begin{enumerate}
\item The ``time-independent'' Gross-Pitaevskii equation~(\ref{eq_GPtindep}) admits a family of nonlinear ground states $\psi_{E,s}>0$ satisfying the equation
\begin{align}
(-\Delta+V_s+E)\psi_{E,s}+b(\psi_{E,s})^3=0,
\end{align}
and they bifurcate from the zero solution: 
\begin{align*}
\psi_{E,s}= \sqrt{\frac{E_{*,s}-E}{b}}\frac{1}{\sqrt{\langle v_{*,s}^2,v_{*,s}^2\rangle}}v_{*,s}+\mathcal{O}_{H^{2,l}}(E_{*,s}-E)\, .
\end{align*}
In fact, $\psi_{E,s}$ is analytic in $\sqrt{\frac{E_{*,s}-E}{b}}$ and $P_{H,s}^c\psi_{E,s}=
\mathcal{O}_{H^{2,l}}\left(\left(\frac{E_*-E}{b}\right)^\frac{3}{2}\right)$.
\item The ground states $\psi_{E,s}$ form a two-dimensional Banach manifold $\mathcal{M}\subset H^{2,l}(\R^{3})$. For fixed $s$ the assertions in (i) hold and the map $s\mapsto \psi_{E,s} \in H^{2,l}(\R^{3})$ is $C^{2}$.
\item There exists a unique positive family of ground states $s\mapsto \psi_{E_s, s} \in C^2([0,1];H^{2,l}(\R^3))$ with constant mass, $\|\psi_{E_{s},s} \|_2^2\equiv\eta$.
 \end{enumerate}
\end{prop}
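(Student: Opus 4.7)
The plan is to carry out a Lyapunov--Schmidt reduction followed by the analytic and smooth implicit function theorems. Fix $s\in[0,1]$ and decompose a candidate stationary state as $\psi_{E,s}=\alpha v_{*,s}+w$ with $\alpha\in\R$ and $w=P_{H_s}^{c}\psi_{E,s}\in\operatorname{ran}(P_{H_s}^{c})$. Substituting into \eqref{eq_GPtindep} and projecting onto $v_{*,s}$ and onto $\operatorname{ran}(P_{H_s}^{c})$ yields the bifurcation system
\begin{align*}
(E-E_{*,s})\alpha + b\,\langle v_{*,s},(\alpha v_{*,s}+w)^{3}\rangle &= 0,\\
(-\Delta+V_{s}+E)\,w + b\,P_{H_s}^{c}(\alpha v_{*,s}+w)^{3} &= 0.
\end{align*}
By $(\mathrm{H}_{\mathrm{e}})$ the operator $-\Delta+V_{s}+E$ has spectrum contained in $[E,\infty)$ on $\operatorname{ran}(P_{H_s}^{c})$ whenever $E$ lies near $E_{*,s}\geq G_{0}>0$, hence is boundedly invertible there. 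Combined with hypothesis $(\mathrm{H}_{\mathrm{d}})$ and standard Agmon decay of $v_{*,s}$, one verifies that the restricted resolvent preserves $H^{2,l}$ for every $l$ of interest, and a Banach fixed-point argument in a small ball of $H^{2,l}$ solves the continuous-spectrum equation for $w=w(\alpha,E,s)$ analytic in $\alpha$ with $w=\mathcal{O}_{H^{2,l}}(\alpha^{3})$; the sign symmetry $(\alpha,w)\mapsto(-\alpha,-w)$ of the system forces $w$ to be odd in $\alpha$.

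For (i), inserting $w(\alpha)$ into the discrete equation and dividing out $\alpha\neq 0$ yields, after the substitution $E=E_{*,s}-b\mu^{2}$ with $\mu:=\sqrt{(E_{*,s}-E)/b}$ and the rescaling $\alpha=\mu\beta$, the algebraic problem
\begin{equation*}
-1 + \beta^{2}\langle v_{*,s}^{2},v_{*,s}^{2}\rangle + \mathcal{O}(\mu^{2}\beta^{4}) = 0.
\end{equation*}
The analytic implicit function theorem applied near $(\mu,\beta)=(0,\langle v_{*,s}^{2},v_{*,s}^{2}\rangle^{-1/2})$ produces a unique analytic branch $\beta(\mu)$, hence
$\alpha(\mu) = \mu\,\langle v_{*,s}^{2},v_{*,s}^{2}\rangle^{-1/2}+\mathcal{O}(\mu^{3})$, and the full ground state $\psi_{E,s}=\alpha v_{*,s}+w(\alpha)$ is analytic in $\mu$, has the claimed leading order, and satisfies $P_{H_s}^{c}\psi_{E,s}=\mathcal{O}_{H^{2,l}}(\mu^{3})$. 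Positivity of $\psi_{E,s}$ for small $\mu>0$ follows from $v_{*,s}>0$ (Perron--Frobenius for the simple ground state of $-\Delta+V_{s}$) and the smallness of $w$ in $L^{\infty}$.

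For (ii), $(\mathrm{H}_{\mathrm{d}})$ together with the uniform isolation of $-E_{*,s}$ from $(\mathrm{H}_{\mathrm{e}})$ implies, via the Riesz projection formula, that $s\mapsto (v_{*,s},E_{*,s},P_{H_s}^{c/d})$ is $C^{2}$ in the appropriate topologies, so every ingredient of the Lyapunov--Schmidt scheme inherits $C^{2}$ dependence on $s$ jointly with analytic dependence on $\mu$; the smooth implicit function theorem then upgrades the construction to a $C^{2}$ two-parameter family $(s,E)\mapsto\psi_{E,s}\in H^{2,l}$, which is the two-dimensional Banach manifold. For (iii), set $F(E,s):=\|\psi_{E,s}\|_{2}^{2}-\eta$; from (i),
\begin{equation*}
F(E,s) = \frac{E_{*,s}-E}{b}\cdot\frac{\|v_{*,s}\|_{2}^{2}}{\langle v_{*,s}^{2},v_{*,s}^{2}\rangle} + \mathcal{O}\!\bigl((E_{*,s}-E)^{2}\bigr) - \eta,
\end{equation*}
so $\partial_{E}F = -\|v_{*,s}\|_{2}^{2}/\bigl(b\,\langle v_{*,s}^{2},v_{*,s}^{2}\rangle\bigr)+\mathcal{O}(E_{*,s}-E)\neq 0$ and the implicit function theorem produces the unique $C^{2}$ curve $s\mapsto E_{s}$ solving $F(E_{s},s)=0$. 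I expect the main technical obstacle to lie in the uniform-in-$s$ weighted mapping estimates for $(-\Delta+V_{s}+E)^{-1}P_{H_s}^{c}$ needed to make the fixed-point step work in $H^{2,l}$ for all relevant $l$; once those are in hand, the remainder is routine implicit-function-theorem bookkeeping.
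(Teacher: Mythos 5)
The paper does not actually supply a proof of Proposition~\ref{groundstatefamily}: it cites \cite{yautsai} (and related work) and skips the details. Your Lyapunov--Schmidt reduction followed by the analytic/smooth implicit function theorem is exactly the standard machinery used in those references, and the skeleton you give — projection onto $v_{*,s}$ and its orthogonal complement, inversion of $(-\Delta+V_s+E)\restriction\operatorname{ran}P_{H_s}^{c}$ using the spectral gap from $(\mathrm{H}_{\mathrm{e}})$, the oddness of $w$ in $\alpha$ and hence $w=\mathcal{O}(\alpha^{3})$, the rescaling $E=E_{*,s}-b\mu^{2}$, $\alpha=\mu\beta$, and the $C^{2}$ dependence on $s$ via the Riesz projection — is sound and delivers all three assertions.

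Two places deserve a sentence more than you give them. First, the positivity argument: ``smallness of $w$ in $L^{\infty}$'' is not enough because $v_{*,s}$ decays to zero, so $\alpha v_{*,s}+w>0$ requires a pointwise comparison such as $|w(x)|\lesssim \alpha^{3}\,\e^{-\kappa|x|}$ with a decay rate at least matching that of $v_{*,s}$ (available from Agmon estimates applied to the fixed-point equation for $w$, since its source $P_{H_s}^{c}(\alpha v_{*,s}+w)^{3}$ decays faster); without this, $L^{\infty}$-smallness alone does not force $P_{H_s}^{c}\psi_{E,s}$ to be dominated by the spine $\alpha v_{*,s}$. Second, the word ``unique'' in parts (i) and (iii): the implicit function theorem gives uniqueness only among solutions in a neighbourhood of the branch, and upgrading this to uniqueness among all positive, small-mass solutions requires an a priori argument that any such solution must project close to $v_{*,s}$ and hence fall within the Lyapunov--Schmidt neighbourhood. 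Both points are standard and are handled in the literature the paper cites, but they are genuine steps and should be flagged rather than absorbed into ``routine bookkeeping''.
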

The results are well known, see e.g. \cite{yautsai}. Hence we choose to skip the details.

\section{Main theorem}\label{chap4:mainthm}
The notion of a family of ground states allows to formulate the following adiabatic theorem, which is the main result.

\begin{thm} \label{mainthm} 
Let $\Psi_{0}=\psi_{E_{0},0}$ with $\|\psi_{E_{0},0}\|_{2}^{2}=\eta\ll 1$ as above and $\eps \ll 1$. Then Equation (\ref{eq_GP}) possesses a unique solution $s\mapsto \Psi_{s}$ in $C^{1}([0,1];H^{2}(\R^{3}))$ with the property that
\begin{align*}
\sup_{0\leq s \leq 1}\| \Psi_{s} - \e^{-\i\zeta_{s} }\psi_{E_{s},s}\H \lesssim \eps\, .
\end{align*}
Here, $\zeta_{s}:=\xi^{\eps}_{s}+\frac{1}{\eps}\int_{0}^{s}E_{s'}ds'$ and
 $\xi_{s}^{\eps}$ is a real function, uniformly bounded in $s$ and $\eps$.
\end{thm}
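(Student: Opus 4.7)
My approach would be to parametrize the solution along the ground state manifold, derive an equation for the remainder, and close a bootstrap estimate using dispersive bounds for the linearized operator. I would make the ansatz
\begin{align*}
\Psi_s = \e^{-\i\zeta_s}\bigl(\psi_{E^\eps_s, s} + \phi_s\bigr),
\end{align*}
with $\zeta_s = \xi^\eps_s + \eps^{-1}\int_0^s E^\eps_{s'}\,\d s'$ and $\phi_0 = 0$, treating both $E^\eps_s$ (close to the mass-preserving energy $E_s$ of Proposition \ref{groundstatefamily}(iii)) and the phase correction $\xi^\eps_s$ as free modulation parameters. Substituting into \eqref{eq_GP} and using the stationary equation \eqref{eq_GPtindep}, one obtains a remainder equation of the schematic form
\begin{align*}
\i\eps\partial_s\phi_s = \mathcal{L}_s \phi_s \;-\; \i\eps\,\partial_s\psi_{E^\eps_s,s} \;-\; \eps\,\dot E^\eps_s\,\partial_E\psi_{E^\eps_s,s} \;+\; \eps\,\dot\xi^\eps_s\bigl(\psi_{E^\eps_s,s} + \phi_s\bigr) \;+\; N_s(\phi_s),
\end{align*}
where $\mathcal{L}_s$ is the real-linear operator obtained by linearizing at $\psi_{E^\eps_s,s}$ (acting on the pair $(\phi_s,\overline{\phi_s})$) and $N_s(\phi_s) = \mathcal{O}(|\phi_s|^2)$. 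The inhomogeneity $\i\eps\,\partial_s\psi_{E^\eps_s,s}$ of size $\eps$ is precisely the adiabatic driving term responsible for the final $\mathcal{O}(\eps)$ error.

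Next I would fix the scalar parameters $\dot E^\eps_s,\dot\xi^\eps_s$ by imposing two symplectic-orthogonality conditions on $\phi_s$ with respect to the generalized zero-mode pair $\{\partial_E\psi_{E^\eps_s,s},\i\psi_{E^\eps_s,s}\}$ of $\mathcal{L}_s$, whose existence reflects the gauge and energy symmetries of \eqref{eq_GPtindep}. Mass conservation of the Gross-Pitaevskii flow, combined with the constant-mass parametrization of $\mathcal{M}$ from Proposition \ref{groundstatefamily}(iii), guarantees that these modulation equations have a unique solution with $\dot E^\eps_s,\dot\xi^\eps_s = \mathcal{O}(1)$ as smooth functionals of $\phi_s$ and that $E^\eps_s$ stays close to the mass-preserving $E_s$ of the theorem statement.

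The heart of the argument is to bound the component of $\phi_s$ transverse to the zero modes. Dividing \eqref{eq_GP} by $\eps$ and applying Duhamel one has
\begin{align*}
\phi_s = -\frac{\i}{\eps}\int_0^s U_{\mathcal{L}}(s,s')\bigl[R_{s'} + N_{s'}(\phi_{s'})\bigr]\,\d s',
\end{align*}
where $R_{s'} = \mathcal{O}(\eps)$ collects the adiabatic source and modulation terms, and $U_{\mathcal{L}}$ is the propagator of $\mathcal{L}$ on the continuous spectral subspace. By $(\mathrm{H}_{\mathrm{e}})$ there is a uniform spectral gap $G_0>0$ around the bound state and by $(\mathrm{H}_{\mathrm{r}})$ no zero energy resonance; the smallness of $\eta$ makes $\mathcal{L}_s$ a small perturbation of $-\Delta+V_s+E_s$. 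I would therefore import local decay estimates of the form $\|\japx^{-\sigma}U_{\mathcal{L}}(s,s')\japx^{-\sigma}\|_{L^2\to L^2}\lesssim \langle(s-s')/\eps\rangle^{-3/2}$. Since $\partial_s\psi_{E^\eps_s,s}$ is spatially localized by $(\mathrm{H}_{\mathrm{d}})$, the Duhamel integral in a weighted norm is $\mathcal{O}(\eps)$ uniformly in $s$; upgrading to the unweighted $H^2$ norm is then achieved through a separate energy estimate together with $L^2$-conservation.

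The main obstacle is that $\mathcal{L}_s$ is not self-adjoint on $L^2(\R^3)$, so the naive energy estimate fails and the standard linear $L^2$-scheme of adiabatic theory does not apply. The bootstrap must therefore simultaneously propagate a weighted-in-space bound (which supplies the $t$-integrable decay) and an unweighted $H^2$ bound (which is needed to close on the quadratic nonlinearity $N_s(\phi_s)$); the smallness of $\eta$ balances these two so that the nonlinear feedback is absorbable. The most delicate technical point will be a freezing argument that replaces the time-dependent generator $\mathcal{L}_s$ by $\mathcal{L}_{s'}$ inside the Duhamel integral while retaining an error of order $\eps$, since instantaneous propagators do not commute across different times. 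I expect this is exactly the content of the reformulated Theorem \ref{refor} in Section \ref{sec:refor}, whose proof in Sections \ref{chap4:main} and \ref{sec:full} then turns into the careful execution of the two-norm bootstrap described above.
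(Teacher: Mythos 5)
Your skeleton is correct as far as it goes — modulation parameters fixed by symplectic orthogonality to the generalized kernel of the linearized operator, two-norm (weighted plus global $H^2$) bootstrap, frozen linearization, $(\mathrm{H}_{\mathrm{r}})$ and $(\mathrm{H}_{\mathrm{e}})$ to supply dispersion, $\eta$ small to tame the non-self-adjointness and the nonlinear feedback. But there is a genuine gap at the center: with the Duhamel formula you write,
\begin{align*}
\phi_s = -\frac{\i}{\eps}\int_0^s U_{\mathcal{L}}(s,s')\bigl[R_{s'} + N_{s'}(\phi_{s'})\bigr]\,\d s',\qquad R_{s'}=\mathcal{O}(\eps),
\end{align*}
the unweighted $H^2$ bound does not close. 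The adiabatic forcing $R_{s'}= -\i\eps\,\partial_{s'}\psi_{E^\eps_{s'},s'}$ is localized but has no time decay in $H^2$; the propagator is merely bounded on $H^2$, not contracting; so $\frac{1}{\eps}\int_0^s\|R_{s'}\|_{H^2}\d s' = \mathcal{O}(1)$, not $\mathcal{O}(\eps)$. The decay factor $\langle(s-s')/\eps\rangle^{-3/2}$ only helps in the weighted norm, and even there it produces $\|\phi\|_{H^{2,-\sigma}}\lesssim\eps$ without the extra time-decaying factor needed to feed the $\frac{1}{\eps}\|\cdot\|_{H^{2,-\sigma}}$ term in the $H^2$ Duhamel estimate. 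In other words, your scheme as stated proves $\sup_s\|\phi_s\|_{H^2}\lesssim 1$, which is not the theorem.

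What the paper does to close this is a further, explicit splitting \eqref{finerdecomp}
\begin{align*}
\vec\phi_s = \eps\, L_s^{-1}P_s^c\,\partial_s\vec\psi_{E^\eps_s,s}+\tilde{\vec\phi}_s,
\end{align*}
i.e. it solves off the leading adiabatic corrector algebraically (the invertibility of $L_s$ on $P_s^c L^2$ is Lemma \ref{chapGP:projexpr}) and proves the bootstrap for $\tilde{\vec\phi}_s$. Because the dominant source $P_s^c\partial_s\vec\psi$ has been cancelled, the remaining sources in the $\tilde{\vec\phi}$ equation are of size $\eps^2$ or come without the dangerous $\eps^{-1}$, the initial datum is $\eps$ times a fixed localized function so $U_0(s,0)P^c\tilde{\vec\phi}_0$ carries a genuine $\langle s/\eps\rangle^{-3/2}$ decay, and one closes the two-norm bootstrap with $\|\tilde{\vec\phi}\|_{H^{2,-\sigma}}\lesssim\eps\langle s/\eps\rangle^{-3/2}+\eps^2$ and $\|\tilde{\vec\phi}\|_{H^2}\lesssim\eps$. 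This ``first adiabatic corrector'' step is the idea missing from your plan, and without it the $H^2$ estimate cannot reach order $\eps$.

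A secondary but real issue is that you invoke dispersive local-decay bounds for the full instantaneous linearized propagator $U_{\mathcal L}(s,s')$. The paper never proves or cites such estimates; $\mathcal L_s$ is time-dependent, non-self-adjoint, and real-linear, so this would be a substantial new input. Instead the paper works with the propagator $U_0$ generated by the scalar frozen Schr\"odinger operator $H_{E^\eps_s,0}=-\Delta+V_0-E^\eps_s$, for which Goldberg's $L^1\to L^\infty$ bound applies (Theorem \ref{cor1}), and absorbs the difference $L_s+JH_{E^\eps_s,0}$ (which contains both $V_s-V_0$ and the $\psi^2$-potentials, and is localized and small by \eqref{est:locality}) into the Duhamel integral; the smallness $\delta(0,s_0)$ is then chosen to make the resulting fixed-point estimate contractive. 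Your ``freezing argument'' remark gestures at this but replaces $\mathcal L_s$ by $\mathcal L_{s'}$, which still leaves you with a matrix operator for which you have no dispersive estimate. The correct freezing is all the way down to the scalar $H_\tau$.
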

The theorem will be reformulated into Theorem \ref{refor} below.

The unesthetic factor $\e^{\i \zeta_{s}}$ is avoided by going over to projectors. Dirac notation allows us to formulate the following immediate corollary of  Theorem \ref{mainthm}.
\begin{cor} Under the assumptions in Theorem \ref{mainthm},
\begin{align*}
\sup_{0\leq s \leq 1}\| \ket{\Psi_{s}} \bra{\Psi_{s}}-  \ket{\psi_{E_{s},s}}\bra{\psi_{E_{s},s}} \|_{L^2\rightarrow L^2} \lesssim \eps\, .
\end{align*}
\end{cor}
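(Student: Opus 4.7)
The plan is to reduce the corollary to Theorem~\ref{mainthm} by exploiting the algebraic identity for differences of rank-one projectors and the fact that the $L^2$-norm of $\Psi_s$ is conserved under the flow of \eqref{eq_GP}.

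First I would observe that the phase $e^{-\i\zeta_s}$ is immaterial once one passes to projectors: setting $\phi_{s}:=\e^{-\i\zeta_{s}}\psi_{E_{s},s}$, one has $\ket{\phi_{s}}\bra{\phi_{s}}=\ket{\psi_{E_{s},s}}\bra{\psi_{E_{s},s}}$ because the two phase factors cancel against each other. This is precisely the advantage of formulating the statement in terms of orthogonal projectors.

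Next I would use the telescoping identity
\begin{align*}
\ket{\Psi_{s}}\bra{\Psi_{s}}-\ket{\phi_{s}}\bra{\phi_{s}}=\ket{\Psi_{s}-\phi_{s}}\bra{\Psi_{s}}+\ket{\phi_{s}}\bra{\Psi_{s}-\phi_{s}},
\end{align*}
together with the elementary operator-norm bound $\|\ket{f}\bra{g}\|_{L^{2}\to L^{2}}=\|f\|_{2}\|g\|_{2}$. Applying the triangle inequality yields
\begin{align*}
\bigl\|\ket{\Psi_{s}}\bra{\Psi_{s}}-\ket{\phi_{s}}\bra{\phi_{s}}\bigr\|_{L^{2}\to L^{2}}\leq \|\Psi_{s}-\phi_{s}\|_{2}\bigl(\|\Psi_{s}\|_{2}+\|\phi_{s}\|_{2}\bigr).
\end{align*}

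Finally, I would control the two factors on the right separately. The embedding $H^{2}(\R^{3})\hookrightarrow L^{2}(\R^{3})$ and Theorem~\ref{mainthm} give $\|\Psi_{s}-\phi_{s}\|_{2}\lesssim\|\Psi_{s}-\phi_{s}\|_{H^{2}}\lesssim\eps$, uniformly in $s$. For the second factor, the conservation of the $L^{2}$-mass under the Gross-Pitaevskii flow yields $\|\Psi_{s}\|_{2}=\|\Psi_{0}\|_{2}=\sqrt{\eta}$, while by construction $\|\phi_{s}\|_{2}=\|\psi_{E_{s},s}\|_{2}=\sqrt{\eta}$. Combining these bounds and taking the supremum in $s\in[0,1]$ produces the claimed $\mathcal{O}(\eps)$ estimate.

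There is essentially no obstacle: the only non-trivial input beyond the preceding theorem is the $L^{2}$-mass conservation for \eqref{eq_GP}, which is standard and follows at once from the self-adjointness of $-\Delta+V_{s}+b|\Psi_{s}|^{2}$ and the fact that the Cauchy problem is well-posed in $H^{2}$ (as already asserted by Theorem~\ref{mainthm}).
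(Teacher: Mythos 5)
Your proof is correct and is precisely the argument the paper leaves implicit when calling this an ``immediate corollary'': the phase dies on the projector level, the difference of rank-one projectors is controlled by $\|\Psi_s-\phi_s\|_2(\|\Psi_s\|_2+\|\phi_s\|_2)$, and both factors are bounded by Theorem~\ref{mainthm} (via $H^2\hookrightarrow L^2$) and $L^2$-mass conservation respectively. No discrepancy with the paper's intent.
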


For a complete proof we need to prove the local wellposedness of the solution in space $H^{2}(\mathbb{R}^3).$ But here we choose to skip it by the fact it is straightforward in the present setting, see also e.g. \cite{Ginibrevelo1, Ginibre:155598}.

The following well-known results will be used often. Their proofs are standard, hence omitted.
\begin{lem} \label{lem_norm} 
\begin{enumerate}
\item[]
\item $\|\phi\|_{H^2} \simeq \|\phi\|_2 + \|(-\Delta+V)\phi \|_2\, .$
\item  For any $l \in \R$
\begin{align*} 
\|\phi\Hl&\simeq \|\langle x \rangle ^{l}\phi\|_2 + \|\langle  x\rangle ^{l}\Delta \phi \|_2 \, .
\end{align*}
\item
Product estimates, recall that by definition $\| \phi\|_{W^{2,1}}:= \|\phi\|_{1}+\|\Delta \phi\|_{1}$,
\begin{align} \label{prodest1}
\|{\phi}\chi\|_{{W}^{2,1}} &\lesssim \|\phi\H \|\chi \H\, , \\
\label{prodest3} \|{\phi}\chi\H&  \lesssim \|{\phi}\H \|\chi\H \, ,\\
\label{prodest5}
\| \phi \chi \|_{H^{2,l}} &\lesssim \| \phi \|_{H^{2,l}} \| \chi \|_{H^{2,l}}\, , \hspace{0.5in} (\mathrm{if } \ l \geq0).
\end{align}
\end{enumerate}
\end{lem}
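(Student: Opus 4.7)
The plan is to handle each of the three parts by standard Sobolev-theoretic manipulations. The only genuinely global input I need is that $V_s\in L^\infty(\R^3)$ (guaranteed by hypothesis $(\mathrm{H}_{\mathrm{d}})$) and the 3D embeddings $H^2(\R^3)\hookrightarrow L^\infty(\R^3)$ and $H^1(\R^3)\hookrightarrow L^4(\R^3)$, the latter via Gagliardo-Nirenberg.

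For (i), the bound $\lesssim$ is immediate from the triangle inequality together with $\|V_s\phi\|_2 \leq \|V_s\|_\infty \|\phi\|_2$. For the converse, the same triangle inequality gives $\|\Delta\phi\|_2 \leq \|(-\Delta+V_s)\phi\|_2 + \|V_s\|_\infty\|\phi\|_2$, after which one combines with the flat elliptic identity $\|\phi\|_{H^2}\simeq \|\phi\|_2 + \|\Delta\phi\|_2$ (by Plancherel on the Fourier side).

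For (ii), put $f:=\langle x\rangle^l\phi$, so that $\|\phi\|_{H^{2,l}}=\|f\|_{H^2}\simeq\|f\|_2+\|\Delta f\|_2$. Expanding the Laplacian,
\begin{equation*}
\Delta f = \langle x\rangle^l \Delta\phi + 2\nabla\langle x\rangle^l\cdot\nabla\phi + (\Delta\langle x\rangle^l)\phi,
\end{equation*}
and using $|\nabla\langle x\rangle^l|\lesssim\langle x\rangle^{l-1}$, $|\Delta\langle x\rangle^l|\lesssim \langle x\rangle^{l-2}$, the last summand is trivially controlled by $\|\langle x\rangle^l\phi\|_2$. The cross term $\langle x\rangle^{l-1}|\nabla\phi|$ is the delicate one: control it by the weighted integration-by-parts identity
\begin{equation*}
\int \langle x\rangle^{2(l-1)}|\nabla\phi|^2\, dx = -\int \overline{\phi}\,\nabla\cdot\bigl(\langle x\rangle^{2(l-1)}\nabla\phi\bigr)\,dx,
\end{equation*}
followed by Cauchy-Schwarz, which yields $\|\langle x\rangle^{l-1}\nabla\phi\|_2^2\lesssim \|\langle x\rangle^l\phi\|_2\,\|\langle x\rangle^{l-2}\Delta\phi\|_2 + \|\langle x\rangle^l\phi\|_2\,\|\langle x\rangle^{l-1}\nabla\phi\|_2$; the latter term is then absorbed into the left-hand side, giving the required bound in terms of $\|\langle x\rangle^l\phi\|_2+\|\langle x\rangle^l\Delta\phi\|_2$.

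For (iii), \eqref{prodest3} follows from $\Delta(\phi\chi)=(\Delta\phi)\chi+2\nabla\phi\cdot\nabla\chi+\phi(\Delta\chi)$, bounding the first and third term via $H^2\hookrightarrow L^\infty$ and the cross term via $\|\nabla\phi\|_{L^4}\|\nabla\chi\|_{L^4}\lesssim\|\phi\|_{H^2}\|\chi\|_{H^2}$. Inequality \eqref{prodest1} is Cauchy-Schwarz applied term-by-term to the same expansion. Finally, \eqref{prodest5} for $l\geq 0$ is obtained by applying \eqref{prodest3} to the pair $(\langle x\rangle^l\phi,\chi)$ and using $\|\chi\|_{H^2}\leq \|\chi\|_{H^{2,l}}$, which is valid since $\langle x\rangle^l\geq 1$ when $l\geq 0$.

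The main obstacle is the commutator analysis in part (ii): the intermediate gradient term $\nabla\langle x\rangle^l\cdot\nabla\phi$ is neither directly bounded by $\|\langle x\rangle^l\Delta\phi\|_2$ nor by $\|\langle x\rangle^l\phi\|_2$, and the integration-by-parts absorption above is the only clean way to close the estimate for general real $l$ (including negative $l$, which is needed elsewhere in the paper). Everything else is a routine assembly of Sobolev embeddings.
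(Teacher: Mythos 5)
Your proof is correct, and since the paper omits the argument (citing it as standard), there is nothing to contrast it against: the route you take — triangle inequality plus flat elliptic regularity for (i); a Leibniz expansion with the integration-by-parts absorption of the $\langle x\rangle^{l-1}\nabla\phi$ cross term for (ii); and Leibniz combined with $H^2(\R^3)\hookrightarrow L^\infty$ and $H^1(\R^3)\hookrightarrow L^4$ for (iii) — is exactly the standard derivation these claims are meant to invoke. The only thing left implicit is the reverse inequality in (ii), but that follows by the same Leibniz rearrangement with $\|\langle x\rangle^{l-1}\nabla\phi\|_2 \lesssim \|\nabla f\|_2 + \|\langle x\rangle^{l-2}\phi\|_2$ and needs no IBP, so it genuinely falls under the ``routine assembly'' you note.
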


\section{Linearization around the ground state}   \label{subsec_linop}
We start with linearizing around the ground state and make an ansatz\footnote{The ansatz is arguably naive as it does not reflect possible geometric phase changes (in the sense of Berry's phase). This will be made good for in Lemma~\ref{lem_decomp}.} 
\begin{align} \label{naive}
 \Psi_s = e^{\frac{\i}{\epsilon}\int_0^sE_{s'}ds'}(\psi_{E_{s},s} + \varphi_s) \, 
\end{align}
to derive an equation for $\varphi_s$. Note that since the nonlinearity in (\ref{eq_GP}) is not complex analytic in the
wave function $\Psi_s$, the linearized operator will only be real linear. It is
thus favorable to adopt the notation 
\begin{align} \label{GP:convention}
\vec{\varphi}_s = \begin{pmatrix} \mathfrak{R}\varphi_s \\
\mathfrak{I}\varphi_s\end{pmatrix}=\begin{pmatrix}\varphi_{1,s} \\ \varphi_{2,s}
\end{pmatrix}\end{align} 
and likewise for any other complex quantities. Plugging (\ref{eq_decomp}) into (\ref{eq_GP}) to find
\begin{align} 
 \dot{\vec{\varphi}}_s&=-\frac{1}{\epsilon}J \begin{pmatrix} -\Delta+
\V+E_{s}+3b\psi_{E_{s},s}^2 & 0 \\ 0 & -\Delta+
\V+E_{s}+b\psi_{E_{s},s}^2 \end{pmatrix}\vec{\varphi}_s \label{eq_definingL} \\
&\phantom{aaa}-\frac{d}{ds}\vec{\psi}_{E_{s},s} -\frac { 1 } {
\epsilon}J\begin{pmatrix}b\psi_{E_{s},s}|\vec{\varphi}_s|^2 +
2b\psi_{E_{s},s}({\varphi_{1,s}})^2+b|\vec{\varphi}_s|^2\varphi_{1,s} \label{eq_disp0.1} \\ 2b\psi_{E_{s},s} \varphi_{1,s}
\varphi_{2,s} + b|\vec{\varphi}_s|^2 \varphi_{2,s}\end{pmatrix} \\ &=:
\frac{1}{\epsilon}L_{E_s,s} \vec{\varphi}_s-\frac{d}{ds}\vec{\psi}_{E_{s},s}-\frac { 1 } { \epsilon}N(\vec{\psi}_{E_{s},s},\vec{\varphi}_s) \, . \label{eq_disp1}
\end{align}
Here the linear operator $L_{E,s}$ is naturally defined by (\ref{eq_definingL}) and the nonlinearity $N=N(\vec{\psi}_{E_{s},s},\vec{\varphi}_s)$ by (\ref{eq_disp0.1}). We used the notation
\begin{align*}
J:=\begin{pmatrix} 0 & -1 \\ 1 & 0 \end{pmatrix}\, ,
\end{align*}
and moreover $L_{E,s}$ is considered as an unbounded operator on the Hilbert space $L^{2}(\R^{3})\oplus L^{2}(\R^{3})$. To facilitate later discussions we define operators $L_{E,s}^{+}, L_{E,s}^{-}$ so that $L_{E,s}$ takes the form
\begin{align} \label{eq_lin}
L_{E,s} =& \begin{pmatrix}
0 & L^-_{E,s} \\
-L^+_{E,s} & 0
\end{pmatrix}.
\end{align}

Now we study the eigenvalues of the operator $L_{E,s}$. Compute directly to find
\begin{align*}
L^-_{E,s}\psi_{E,s} =0\, .
\end{align*}
Hence $
\begin{pmatrix}0 , \psi_{E,s}
\end{pmatrix}^\intercal
$
is an eigenvector of $L_{E,s}$ with eigenvalue $0$.
Differentiation of the left hand side with respect to $E$ yields
\begin{align*}
L^+_{E,s} \partial_{E_{}}\psi_{E,s}= -\psi_{E,s} \, .
\end{align*}
It follows that  $\begin{pmatrix}- \partial_{E}\psi_{E,s} , 0 \end{pmatrix}^{\intercal}$ is an associated generalized eigenvector of $\begin{pmatrix} 0 , \psi_{E,s} \end{pmatrix}^{\intercal}$ for $L_{E,s}$. 

The Riesz projection for the linear operator $L_{E,s}$ take the following form.
\begin{lem}\label{chapGP:projexpr} $0$ is the only eigenvalue of $L_{E,s}$ in the ball of radius $G_{0}/2$ around zero (cf. $(\mathrm{H}_{\mathrm{e}})$). More precisely, if $\Gamma$ parametrizes its boundary in counterclockwise direction then
\begin{align} \label{eq:proj}
P_{E,s}^d:=-\frac{1}{2\pi \i} \oint_{\Gamma}({L_{E,s}-z})^{-1}dz = \frac{2}{\partial_{E_{}} \|\psi_{E,s}\|_2^2}\left( 
\Ket{\begin{matrix}\partial_{E_{}}\psi_{E,s} \\ 0 \end{matrix}} 
\Bra{\begin{matrix} \psi_{E,s} \\ 0 \end{matrix}} 
+
\Ket{\begin{matrix}0 \\ \psi_{E,s} \end{matrix}} 
\Bra{\begin{matrix} 0 \\ \partial_{E_{}}\psi_{E,s} \end{matrix}}
\right) \, .
\end{align}
\end{lem}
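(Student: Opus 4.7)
The plan is to combine a perturbation argument locating the spectrum of $L_{E,s}$ near zero with an explicit construction of the generalized eigenbasis of $L_{E,s}$ and its adjoint, and then to read off the Riesz projection in closed form.

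First I would isolate $0$ as the unique spectral point of $L_{E,s}$ inside the ball $|z|<G_{0}/2$. At the trivial endpoint $E=E_{*,s}$, where $\psi_{E,s}=0$, the operator reduces to
\[
L^{(0)}_{s}:=\begin{pmatrix} 0 & -\Delta+V_{s}+E_{*,s}\\ -(-\Delta+V_{s}+E_{*,s}) & 0 \end{pmatrix},
\]
whose square is block-diagonal and non-positive. Hypothesis $(\mathrm{H}_{\mathrm{e}})$ then yields $\mathrm{spec}(L^{(0)}_{s})\subset\{0\}\cup\pm\i[E_{*,s},\infty)$. Since $L_{E,s}-L^{(0)}_{s}$ is a bounded matrix-multiplication perturbation of size $\mathcal{O}(E_{*,s}-E)$ by Proposition~\ref{groundstatefamily}, a standard resolvent argument keeps any spectrum of $L_{E,s}$ inside $|z|<G_{0}/2$ confined to a small neighborhood of $0$. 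This neighborhood reduces to $\{0\}$ because $(0,\psi_{E,s})^{\intercal}\in\ker L_{E,s}$ unconditionally.

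Next I would determine the algebraic multiplicity of $0$. If $(\phi_{1},\phi_{2})^{\intercal}\in\ker L_{E,s}$ then $L^{-}_{E,s}\phi_{2}=0$ and $L^{+}_{E,s}\phi_{1}=0$. The first forces $\phi_{2}\in\mathrm{span}\{\psi_{E,s}\}$ (perturbing $\ker(-\Delta+V_{s}+E_{*,s})=\mathrm{span}\{v_{*,s}\}$). A first-order perturbation calculation gives $\langle v_{*,s}, L^{+}_{E,s} v_{*,s}\rangle=2(E_{*,s}-E)+o(E_{*,s}-E)\neq 0$, so $L^{+}_{E,s}$ has trivial kernel and $\phi_{1}=0$; the geometric multiplicity is $1$. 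Together with the generalized eigenvector $(-\partial_{E}\psi_{E,s},0)^{\intercal}$ exhibited in the text, this gives algebraic multiplicity at least $2$, and upper semicontinuity of the dimension of the total spectral subspace under small perturbation forces equality to $2$.

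For the projection formula itself, the adjoint reads $L^{*}_{E,s}=\begin{pmatrix}0 & -L^{+}_{E,s}\\ L^{-}_{E,s} & 0\end{pmatrix}$, and the identical analysis produces the eigenvector $(\psi_{E,s},0)^{\intercal}$ and generalized eigenvector $(0,\partial_{E}\psi_{E,s})^{\intercal}$ at $0$. Setting $e_{1}=(0,\psi_{E,s})^{\intercal}$, $e_{2}=(-\partial_{E}\psi_{E,s},0)^{\intercal}$, $f_{1}=(\psi_{E,s},0)^{\intercal}$, $f_{2}=(0,\partial_{E}\psi_{E,s})^{\intercal}$, the pairing matrix $\bigl(\langle f_{i},e_{j}\rangle\bigr)_{ij}$ is antidiagonal with entries $\pm\tfrac{1}{2}\partial_{E}\|\psi_{E,s}\|_{2}^{2}$. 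Inverting this matrix yields the biorthogonal dual $\{\tilde f_{1},\tilde f_{2}\}$, and assembling $P^{d}_{E,s}=|e_{1}\rangle\langle\tilde f_{1}|+|e_{2}\rangle\langle\tilde f_{2}|$ reproduces the right-hand side of \eqref{eq:proj} exactly, including the prefactor $2/\partial_{E}\|\psi_{E,s}\|_{2}^{2}$.

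The main obstacle I anticipate is the first step: propagating the gap uniformly in $s\in[0,1]$ so that the perturbation from $L^{(0)}_{s}$ is controlled in a norm compatible with the resolvent estimate along the whole curve of ground states; this is where the regularity of the ground state manifold from Proposition~\ref{groundstatefamily} and hypothesis $(\mathrm{H}_{\mathrm{d}})$ are essential. A minor caveat is that the formula presupposes $\partial_{E}\|\psi_{E,s}\|_{2}^{2}\neq 0$; this follows from the bifurcation expansion $\|\psi_{E,s}\|_{2}^{2}=c(E_{*,s}-E)+\mathcal{O}((E_{*,s}-E)^{2})$ with $c\neq 0$, guaranteed by Proposition~\ref{groundstatefamily}.
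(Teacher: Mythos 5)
The paper itself does not prove this lemma---it only cites \cite{GS07}. Your proposal is correct and follows essentially the same route as that reference: perturb off the ``linear'' limit $E=E_{*,s}$ (where $\psi_{E_{*,s},s}=0$) to locate the spectrum, identify the kernel and generalized eigenvector of $L_{E,s}$ and of its adjoint, and assemble the Riesz projection from the biorthogonal frame. The pairing computation and the inversion giving the prefactor $2/\partial_{E}\|\psi_{E,s}\|_{2}^{2}$ check out, and your appeal to Proposition~\ref{groundstatefamily} for $\partial_{E}\|\psi_{E,s}\|_{2}^{2}\neq 0$ is the right justification in the regime $\eta\ll1$.

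Two small points of polish, neither a gap. First, when you conclude $\ker L^{+}_{E,s}=\{0\}$ from the nonvanishing of $\langle v_{*,s},L^{+}_{E,s}v_{*,s}\rangle$, you are implicitly invoking the standard Feshbach/Kato statement that the perturbed simple eigenvalue near $0$ is governed to leading order by exactly this quadratic form; it is worth saying so explicitly. Second, the rank of the Riesz projection over the contour $\Gamma$ of radius $G_{0}/2$ is in fact locally \emph{constant} (not merely upper semicontinuous) as a function of the perturbation---this is what pins the total algebraic multiplicity inside the ball at exactly $2$, and combined with your exhibited Jordan chain of length $2$ at $z=0$ it rules out any other eigenvalue there. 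Your conclusion is the same.
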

The result is well known, can be found in, for example, \cite{GS07}.

\section{Reformulation of Theorem~\ref{mainthm}} \label{sec:refor}
We start with decomposing the solution $\Psi_s$ into different components, according to the spectrum of $L_s$. It relies on the following lemma to decompose the solution. 
In figurative language what the lemma says is the following: As long as $\varphi_s$ in Ansatz~(\ref{naive}) is sufficiently small, then, at the cost of introducing an additional phase $\gamma_s^\eps$, we can ``shadow'' $\psi_{E_s,s} \in \mathcal{M}$ by $\psi_{E_s^\eps,s}$ such that 
\begin{align} \label{eq_decomp}
 \Psi_s = e^{-\frac{\i}{\epsilon}(\int_0^sE^\eps_{s'}ds' - \gamma^\eps_s)}(\gs + \phi_s)
\end{align}
with $P_{E_s^\eps,s}^d \phi_s=0$.
\begin{lem}  \label{lem_decomp}

\noindent For any $\phi$ with $\|\phi\|_{2} \ll 1$ there exist parameters $\hat{E}=\hat{E}(E,s,\phi)$ and $\hat{\gamma}=\hat{\gamma}(E,s,\phi)\in \mathbb{R}$, with $\hat{E}(E,s,0)=E$ and $\hat{\gamma}(E,s,0)=0$, such that if $$\Psi_s=\phi_{E,s}+\phi$$ then
\begin{align*}
\Psi_s=
\e^{i\hat{\gamma}}\left(\psi_{\hat{E},s}+\phi_{\hat{E},s}\right)\, ,
\end{align*}
where $\phi_{\hat{E},s}$ lies in the continuous subspace of $L_{\hat{E},s}$. The dependence of $\hat{E}$, $\hat{\gamma}$ on $E$, $\phi$ is smooth; the dependence on $s$ is $C^{2}$.
\end{lem}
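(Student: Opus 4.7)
This is a standard modulation lemma and I would establish it by the implicit function theorem. For $(\hat E, \hat\gamma)$ in a neighbourhood of $(E,0)$, define
\begin{align*}
\phi_{\hat E, s} := \e^{-\i\hat\gamma}(\psi_{E,s}+\phi) - \psi_{\hat E, s}.
\end{align*}
The requirement that $\phi_{\hat E, s}$ lie in the continuous subspace of $L_{\hat E, s}$ is $P^d_{\hat E, s} \vec\phi_{\hat E, s} = 0$. Since $\ran P^d_{\hat E, s}$ is two-dimensional, this amounts to a system of two real scalar equations in the two unknowns $(\hat E, \hat\gamma)$. Using the explicit form of the Riesz projection in Lemma~\ref{chapGP:projexpr}, one such system is
\begin{align*}
F_1 := \langle \psi_{\hat E, s},\, \mathfrak{R}\phi_{\hat E, s}\rangle = 0,\qquad F_2 := \langle \partial_E \psi_{\hat E, s},\, \mathfrak{I}\phi_{\hat E, s}\rangle = 0.
\end{align*}
At the base point $(\hat E, \hat\gamma, \phi) = (E, 0, 0)$ the quantity $\phi_{\hat E, s}$ vanishes, so $F_1 = F_2 = 0$ and $(\hat E, \hat\gamma) = (E, 0)$ is automatically a solution, which is consistent with the required normalisations $\hat E(E, s, 0) = E$ and $\hat\gamma(E, s, 0) = 0$.

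The key step is to compute the Jacobian of $(F_1, F_2)$ with respect to $(\hat E, \hat\gamma)$ at the base point. Since $\psi_{E,s}$ is real and $\phi$ vanishes at the base point, one finds $\partial_{\hat\gamma}\phi_{\hat E, s}|_{\mathrm{base}} = -\i\psi_{E,s}$ and $\partial_{\hat E}\phi_{\hat E, s}|_{\mathrm{base}} = -\partial_E\psi_{E,s}$. Substituting, the off-diagonal entries vanish by the real/imaginary orthogonality of these two derivatives with respect to the test vectors $\psi_{\hat E, s}$ and $\partial_E \psi_{\hat E, s}$, and the diagonal entries both equal $-\tfrac{1}{2}\partial_E\|\psi_{E,s}\|_2^2$. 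From the bifurcation expansion of Proposition~\ref{groundstatefamily} one has, in the regime $0 \leq (E_{*,s}-E)/b \ll 1$,
\begin{align*}
\|\psi_{E,s}\|_2^2 = \frac{E_{*,s}-E}{b\,\langle v_{*,s}^2, v_{*,s}^2\rangle} + O\!\bigl((E_{*,s}-E)^2\bigr),
\end{align*}
so $\partial_E\|\psi_{E,s}\|_2^2$ is nonzero and in fact bounded away from zero uniformly in $s\in[0,1]$. The implicit function theorem therefore produces smooth maps $(\hat E, \hat\gamma) = (\hat E(E, s, \phi), \hat\gamma(E, s, \phi))$ on a neighbourhood of $\phi = 0$ in $L^2$ whose radius is uniform in $s$, and the $C^2$ regularity in $s$ is inherited from the $C^2$ dependence of $\psi_{E,s}$ and $v_{*,s}$ on $s$ established in Proposition~\ref{groundstatefamily}.

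The only delicate point is the uniform-in-$s$ invertibility of the Jacobian, namely the lower bound on $|\partial_E\|\psi_{E,s}\|_2^2|$. This is the Vakhitov-Kolokolov non-degeneracy condition in disguise and in the present weak-coupling regime it follows from the leading-order term in the bifurcation formula together with the uniform spectral gap assumption $(\mathrm H_{\mathrm e})$, which in particular keeps $\langle v_{*,s}^2, v_{*,s}^2\rangle$ bounded above and away from zero uniformly in $s$.
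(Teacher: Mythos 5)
Your proof is correct and follows the standard implicit-function-theorem modulation argument. The paper itself does not prove Lemma~\ref{lem_decomp}; it defers to \cite{yautsai}, and that reference establishes the analogous decomposition by exactly this route (two scalar orthogonality constraints extracted from the Riesz projection of Lemma~\ref{chapGP:projexpr}, a Jacobian computation at the base point that produces a diagonal matrix with entries $-\tfrac12\partial_E\|\psi_{E,s}\|_2^2$, and non-degeneracy supplied by the bifurcation expansion of Proposition~\ref{groundstatefamily}). Your identification of the non-degeneracy with the Vakhitov--Kolokolov slope condition, and of its uniformity in $s$ with $(\mathrm H_{\mathrm e})$ plus $(\mathrm H_{\mathrm d})$, is the right way to see it; you have also silently corrected the obvious typo $\phi_{E,s}$ for $\psi_{E,s}$ in the statement's hypothesis, which is appropriate.
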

The proof of the lemma can essentially be found in \cite{yautsai}. 

Applying Lemma \ref{lem_decomp}, we reformulate Theorem~\ref{mainthm} in terms of estimates on various components of (\ref{eq_decomp}). Plug \eqref{eq_decomp} into \eqref{eq_GP}  to find,
\begin{align} \label{eq_disp}
 \dot{\vec{\phi}}_s&= 
\frac{1}{\epsilon}L_s \vec{\phi}_s-\frac{1}{\eps} \dot{\gamma}^\eps_s J\dist-\frac{d}{ds}\gsv-\frac{1}{\varepsilon}\dot{\gamma}^\eps_t
J\gsv -\frac { 1 } { \epsilon}N(\gsv,\vec{\phi}_s)\, .
\end{align}
Recall that $\vec{\phi}:=\left(
\begin{array}{lll}
\mathcal{R}\phi\\
\mathcal{I}\phi
\end{array}
\right)$ by the convention in \eqref{GP:convention}, $L_{s}:=L_{E_s^\eps,s}$ and the nonlinearity is defined in \eqref{eq_disp1}, and for later use, $P_{s}^{d}:=P^{d}_{E_{s}^{\eps},s}$. It is not hard to see that our initial condition $\vec\phi_0$ satisfies
\begin{align}
\vec\phi_0=0.
\end{align}

As a first consequence we derive equations for $\dot{E}_{s}^{\eps}, \dot{\gamma}_{s}^{\eps}$, the \emph{modulation equations}. To that end recall the condition $P_{s}^{d}\vec{\phi}_{s}=0$, which, by (\ref{eq:proj}) amounts to 
\begin{align*}
\Braket{ \vec{\phi}_{s} | \begin{matrix} \psi_{E,s} \\ 0 \end{matrix} } = \Braket{ \vec{\phi}_{s} | \begin{matrix} 0\\ \partial_{E}\psi_{E,s}  \end{matrix} } =0 \, .  
\end{align*}
Apply these two orthogonality conditions to Equation \eqref{eq_disp} to derive
\begin{align}
&\dot{E_{s}^\eps}\big[ \langle \partial_{E_{}}\gs, \phi_{1,s}\rangle-\langle \gs,\partial_{E_{}}\gs \rangle\big]+ \frac{\dot{\gamma}_s^\eps}{\eps}\langle \gs , \phi_{2,s} \rangle \nonumber\\
 = & -\langle \partial_s \Psi_{E_t^{\epsilon},s}|_{t=s}, \phi_{1,s}\rangle+   \langle \gs,\partial_s \Psi_{E_t^{\epsilon},s}|_{t=s}\ \rangle +\frac 1 \eps \langle \gs,N(\gsv,\dist)\rangle,
\label{modeq1} \\ 
&\dot{E_{s}^\eps} \langle \partial^2_{E_{}}\gs, \phi_{2,s} \rangle-\frac{\dot{\gamma}^\eps_s}{\eps}\big[ \langle \partial_E \gs, \phi_{1,s}\rangle +\langle \partial_{E_{}}\gs,\gs\rangle\big]  \nonumber\\
=& \frac 1 \eps \langle \partial_{E_{}}\gs,N(\gsv,\dist)\rangle -\langle \partial_s \partial_{E_{}}\gs, \phi_{2,s}\rangle  \, .\label{modeq2}
\end{align}

We continue to transform the equation for $\dot{\vec{\phi}}_s$ in \eqref{eq_disp}. To remove the term $-P_s^c\frac{d}{ds}\gsv$, which is considered the main term, from the right hand, we make a refined decomposition for the function $\dist$ as follows
\begin{align} \label{finerdecomp}
\dist = \epsilon L_s^{-1}P_s^c\frac{d}{ds}\gsv+{\vec{\tilde{\phi}}}_s = \epsilon L_s^{-1}P_s^c\partial_s\gsv+{\vec{\tilde{\phi}}}_s\, .
\end{align} The function $L_s^{-1}P_s^c\partial_s\gsv$ is well defined since,
by Lemma \ref{chapGP:projexpr},  the operator $L^{-1}_{s} : P_{s}^{c}(L^{2}(\R^{3})) \to P_{s}^{c}(L^{2}(\R^{3}))$ is well-defined and bounded. The second identity in \eqref{finerdecomp} follows from the explicit form of the projection \eqref{eq:proj}. 

Plug the Decomposition \eqref{finerdecomp} into \eqref{eq_disp} to yield the equation for ${\vec{\tilde\phi}}_s$,
\begin{align*}
 \dot{{\vec{\tilde\phi}}}_s&=\frac{1}{\epsilon}L_s{\vec{\tilde\phi}}
_s-\frac{\dot{\gamma}^{\eps}_s}{\eps}J\distt-\dot{\gamma}^{\eps}_sJL_s^{-1}P_s^c\partial_{s}\gsv-P_s^d\frac { d } { ds } \gsv -\frac{\dot{\gamma}^{\eps}_s}{\epsilon}J\gsv - \epsilon
\frac{d}{ds}(L_s^{-1}P_s^c\partial_{s} \gsv) \\
&\phantom{aaa}-
\frac{1}{\epsilon}N(\gsv,\varepsilon
L_s^{-1}P_s^c\partial_{s}\gsv+{\vec{\tilde\phi}}_s) \, .
\end{align*}
Apply $P_{s}^{c}$ on both sides to see certain term vanish, and use $\dot{{\vec{\tilde{\phi}}}}_s =\dot{P}_s^c
{\vec{\tilde{\phi}}}_s+{P}_s^c
\dot{{\vec{\tilde\phi}}}_s=-\dot{P}_s^d
{\vec{\tilde\phi}}_s+{P}_s^c
\dot{{\vec{\tilde\phi}}}_s$ to obtain
\begin{align}
 \dot{{\vec{\tilde\phi}}}_s
&= \frac{1}{\epsilon}L_s{\vec{\tilde\phi}}
_s-\frac{\dot{\gamma}^{\eps}_s}{\eps}P_s^cJ\distt-\dot{P}^d_s{\vec{\tilde\phi}}_s-\dot{\gamma}^{\eps}_sP_s^cJL_s^{-1}P_s^c\partial_{s}\gsv-\frac{\dot{\gamma}_s^{\eps}}{\epsilon}\underbrace{P_s^cJ\gsv}_{=0} -
\epsilon
P_s^c\frac{d}{ds}(L_s^{-1}P_s^c\partial_{s} \gsv)\nonumber \\ &\phantom{aaa}-
\frac{1}{\epsilon}P_s^cN(\gsv,\varepsilon
L_s^{-1}P_s^c\partial_{s}\gsv+{\vec{\tilde\phi}}_s) \, .\label{dispeq11}
\end{align}
The initial data for the equation above is, by the fact $\vec\phi_0=0$ and \eqref{finerdecomp},
\begin{align} \label{gpinitialdata}
{\vec{\tilde{\phi}}}_{0}=-\eps L_{0}^{-1}P_{0}^{c}\partial_{s}\gsv|_{s=0}=-\eps L_{0}^{-1}P_{0}^{c}\partial_{s}\vec{\psi}_{E_{s},s}|_{s=0}\, .
\end{align}

We now reformulate Theorem \ref{mainthm}:
\begin{thm}[Reformulation of Theorem~\ref{mainthm}]\label{refor} The function $\distt$, the scalar functions $E_{s}^{\eps}, \gamma_{s}^{\eps}$ satisfy the following estimates:
\begin{align}
\sup_{0\leq s \leq 1}|E_{s}^{\eps}-E_{s}| &\lesssim \eps \, \label{distEest} , \\
\sup_{0\leq s \leq 1}|\dot{\gamma}_{s}^{\eps}|& \lesssim \eps^{2} \label{gammaquadest}\, ,\\
\sup_{0\leq s \leq 1} \| {\vec{\tilde{\phi}}}\|_{H^2} &\lesssim \eps \label{distestn1} \, ,\\
\sup_{0\leq s \leq 1} \|  {\vec{\tilde{\phi}}}\|_{H^{2,-\sigma}} &\lesssim \eps^2\, .
\end{align}

\end{thm}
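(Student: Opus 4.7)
The plan is to prove all four estimates of Theorem \ref{refor} simultaneously by a continuity (bootstrap) argument. Let $s_\ast \in (0,1]$ be the supremum of times for which the stated bounds hold with (say) doubled constants; the goal is to improve the constants strictly on $[0,s_\ast]$, which forces $s_\ast = 1$. The key ingredients are (a) treating the modulation equations as a linear algebraic system for $(\dot E^\eps_s,\dot\gamma^\eps_s/\eps)$, (b) a Duhamel representation for $\vec{\tilde\phi}_s$ built from a propagator $U(s,s')$ generated by $\eps^{-1}L_{s'}$ on $\mathrm{ran}\,P^c_{s'}$, and (c) a dispersive/decay estimate for $U(s,s')$ between weighted spaces.

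I would first extract from the modulation equations \eqref{modeq1}--\eqref{modeq2} bounds on $\dot E^\eps_s - \dot E_s$ and $\dot\gamma^\eps_s$. The $2\times 2$ coefficient matrix has determinant close to $-\|\psi_{E^\eps_s,s}\|_2^{2}\,\partial_E\|\psi_{E,s}\|_2^{2}$, which is nonzero by Proposition \ref{groundstatefamily} provided $\vec\phi_s$ is small (guaranteed by the bootstrap). Substituting $\vec\phi_s = \eps L_s^{-1} P_s^c \partial_s \vec\psi_{E^\eps_s,s} + \vec{\tilde\phi}_s$ and using that $N(\vec\psi,\vec\phi)$ is quadratic-or-higher in $\vec\phi$, the right-hand sides are controlled by localized pairings with $\psi_{E^\eps_s,s}$ and $\partial_E\psi_{E^\eps_s,s}$. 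Since these test functions decay, such pairings are governed by the weighted norm $\|\vec{\tilde\phi}_s\|_{H^{2,-\sigma}}$, which the bootstrap bounds by $\eps^2$. Inverting the system then yields \eqref{distEest} (after integration in $s$) and \eqref{gammaquadest}.

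For the estimates on $\vec{\tilde\phi}_s$ I would integrate \eqref{dispeq11} via Duhamel,
\begin{align*}
\vec{\tilde\phi}_s \;=\; U(s,0)\vec{\tilde\phi}_0 \;+\; \int_0^s U(s,s')\,\mathcal{F}_{s'}\,ds',
\end{align*}
where $\mathcal{F}_{s'}$ collects the inhomogeneous terms on the right of \eqref{dispeq11}. The initial data \eqref{gpinitialdata} is $O(\eps)$ in $H^2$, the ``principal'' source $-\eps P^c_{s'}\frac{d}{ds'}(L_{s'}^{-1}P^c_{s'}\partial_{s'}\vec\psi)$ is likewise $O(\eps)$ in $H^2$ by Proposition \ref{groundstatefamily} and Lemma \ref{chapGP:projexpr}, and the remaining terms are bilinear combinations of $\vec{\tilde\phi}$ with $\dot\gamma^\eps$, $\dot P^d$ and $N$, each quadratic in the bootstrap small quantities. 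Uniform boundedness of $U(s,s')$ on $H^2\cap \mathrm{ran}\,P^c$ (plus the boundedness/regularity of the adiabatic dressing in Proposition \ref{groundstatefamily} used to control the slow variation of $P^c_{s}$) then yields \eqref{distestn1}.

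The weighted bound $\|\vec{\tilde\phi}_s\|_{H^{2,-\sigma}} \lesssim \eps^2$ is the most delicate and is where I expect the main obstacle. It requires a \emph{local-decay/dispersive estimate} for the propagator of the form $\|\langle x\rangle^{-\sigma} U(s,s') P^c_{s'}\langle x\rangle^{-\sigma}\|_{H^2 \to H^2} \lesssim (\eps/(s-s'))^{3/2}$, modelled on Schr\"odinger decay after the rescaling $t = s/\eps$; establishing this uses hypotheses $(\mathrm{H}_\mathrm{e})$ and $(\mathrm{H}_\mathrm{r})$ (spectral gap and absence of zero-energy resonance) together with a comparison between the time-dependent evolution and the frozen-$s'$ evolutions. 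With such a decay in hand, the integration $\int_0^s (\eps/(s-s'))^{3/2} \cdot \eps\, ds'$ converges to $O(\eps^2)$, so pairing $\vec{\tilde\phi}_0$ and $\mathcal{F}_{s'}$ with weighted norms gains the advertised extra factor of $\eps$. Combining the resulting improved bounds $(|E^\eps_s - E_s|, |\dot\gamma^\eps_s|, \|\vec{\tilde\phi}\|_{H^2}, \|\vec{\tilde\phi}\|_{H^{2,-\sigma}}) \lesssim (\eps, \eps^2, \eps, \eps^2)$ with strictly better constants closes the bootstrap and completes the proof of Theorem \ref{refor}.
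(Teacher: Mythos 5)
Your overall architecture is consistent with the paper's: both prove the four bounds by a bootstrap argument, treat \eqref{modeq1}--\eqref{modeq2} as a linear system whose right-hand sides are controlled through local (weighted) pairings, and run Duhamel for $\vec{\tilde\phi}_s$ against a dispersive bound giving $\langle t/\eps\rangle^{-3/2}$ decay into $H^{2,-\sigma}$. However, there are two substantive gaps.

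First, you build Duhamel around a propagator $U(s,s')$ ``generated by $\eps^{-1}L_{s'}$ on $\ran P^c_{s'}$'' and simply posit the key dispersive estimate $\|\langle x\rangle^{-\sigma}U(s,s')P^c_{s'}\langle x\rangle^{-\sigma}\|_{H^2\to H^2}\lesssim (\eps/(s-s'))^{3/2}$ for it. That estimate is precisely what needs \emph{proving}, and it is considerably harder than you suggest: $L_{s'}$ is a non-self-adjoint $2\times 2$ matrix operator whose dispersive theory is a topic in itself. The paper deliberately sidesteps this---see the discussion preceding \eqref{chapgp:refham}: it replaces $L_s$ by the \emph{diagonal, scalar} frozen Hamiltonian $-JH_{E^\eps_s,0}$ with $H_{E^\eps_s,0}=-\Delta+V_0-E^\eps_s$, so that the propagator $U_0(s,s')$ factorizes (Equation \eqref{eq:relateHu}) into a trivial phase matrix times $\e^{-\frac{\i}{\eps}(s-s')(-\Delta+V_0)}$, to which Goldberg's $L^1\to L^\infty$ bound applies directly under $(\mathrm{H}_{\mathrm{r}})$. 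The mismatch $L_s-(-JH_{E^\eps_s,0})$ in \eqref{chapgp:diffexpre}---which contains $V_s-V_0$ and the $3b\psi^2$ terms---is small and spatially decaying, so it is pushed into the Duhamel inhomogeneity, and the smallness $\delta(0,s_0)\ll 1$ from \eqref{est:locality} is enforced by choosing the time interval short. You flag ``a comparison between the time-dependent evolution and the frozen evolutions'' as a step, but in the paper that comparison \emph{is} the entire mechanism, not a side lemma; stating the decay bound for the $L$-propagator as a hypothesis begs the question.

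Second, you project with $P^c_{s'}$, but the paper projects with $P^c_{H_0}$ (the continuous projection of the frozen scalar Hamiltonian) and then must invoke Lemma~\ref{normequiv} to recover $\|\vec{\tilde\phi}_s\|$ from $\|P^c_{H_0}\vec{\tilde\phi}_s\|$---a nontrivial compatibility step you omit. Related to this, the extension from the short interval $[0,s_0]$ to $[0,1]$ is not a plain continuity argument: the paper re-freezes at successive times $s^*$, using $H_{s^*}=-\Delta+V_{s^*}$ and the resulting propagator $U_{s^*}$, and this relies on the uniformity of the constant $A$ in \eqref{dispest} across frozen times, which is proved separately (the perturbation lemma in Section~\ref{sec:B}). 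Finally, note that for nonzero $L_0^{-1}P_0^c\partial_s\vec\psi|_{s=0}$ the initial-data term $U_0(s,0)\vec{\tilde\phi}_0$ contributes $\eps\langle s/\eps\rangle^{-3/2}$, which is $O(\eps)$---not $O(\eps^2)$---for $s$ of order $\eps$; the paper's local bootstrap condition \Bsl{} and Proposition~\ref{gpshortestlem} account for this, while your proposal asserts a uniform $O(\eps^2)$ bound on $[0,s_\ast]$ without addressing the $s\to 0$ regime.
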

The theorem will be proved for a short time interval $[0,s_0]$, with $s_0$ small but independent of $\epsilon,$ in Section \ref{chap4:main} below, and proved in the interval $[0,1]$ in Section \ref{sec:full}.

Clearly, Theorem~\ref{refor} implies Theorem~\ref{mainthm}: with $\xi^{\eps}_{s}:=\frac 1 \eps \int_{0}^{s}\left(E_{s'}^{\eps}-E_{s'}\right)ds'$ it follows that 
\begin{align*}
\sup_{0\leq s \leq 1}\| \Psi_{s}- \e^{-\i(\xi_{s}^{\eps}+\frac{1}{\eps}\int_{0}^{s}E_{s'}ds')}\psi_{E_{s},s}\H&=\sup_{0\leq s \leq 1}\|\e^{\i \frac{\gamma_{s}^{\eps}}{\eps}}(\psi_{E_{s}^{\eps},s}+\phi_{s})- \psi_{E_{s},s}\H \\ 
& \lesssim \sup_{0\leq s \leq 1} \| \psi_{E_{s}^{\eps},s} - \psi_{E_{s},s} \H + \sup_{0\leq s \leq 1}\|\phi_{s} \H +\eps \\
&\lesssim \eps \, .
\end{align*}
The first inequality makes use of (\ref{gammaquadest}), the second of Proposition~\ref{groundstatefamily} in combination with (\ref{distEest}) as well as (\ref{distestn1}).

\section{Proof of Theorem~\ref{refor} for small time $[0,s_0]$} \label{chap4:main} 

Mathematically, the main work for the proof of Theorem~\ref{refor} lies in the demonstration of its validity on a small interval $[0,s_{0}]$, with $s_{0}$ being independent of $\eps$. Here we need $s_0$ small enough so that some Fixed-Point-Theorem-type argument can be applied, see the choice of (or two conditions on) $s_0$ in \eqref{deltachoice} and \eqref{chooseDeltaA} below.

We begin with presenting the main ideas. The core of the proof is a bootstrap argument. Specifically, define a locally controlling function $M_{s}^{(l)}$ as
\begin{align*}
\Ml:&=\sup_{0\leq s' \leq s}\|\dists\Hls
\end{align*}
and a globally controlling function $M_{s}^{(\mathrm{g})}$ as
\begin{align*}
\M:&=\sup_{0\leq s' \leq s}\|\dists\H\, ,
\end{align*}
cf. Equation \eqref{finerdecomp}. Recall that $\sigma>2$.

To start the bootstrap arguments, we use that,
by \eqref{gpinitialdata}, the initial data satisfies the conditions
\begin{align*}
\|{\vec{\tilde{\phi}}}_{0}\Hls &\leq \eps \|L_{0}^{-1}P_{0}^{c}\partial_{s}\vec{\psi}_{E_{s},s}|_{s=0}\|_{H^{2}\cap W^{2,1}}\, , \\
\|{\vec{\tilde{\phi}}}_{0}\H &\leq \eps \|L_{0}^{-1}P_{0}^{c}\partial_{s}\vec{\psi}_{E_{s},s}|_{s=0}\H\, ,
\end{align*}
resp. 
\begin{align*}
{\vec{\tilde{\phi}}}_{0}=0\hspace{.5in} \text{ if }L_{0}^{-1}P_{0}^{c}\partial_{s}\vec{\psi_{E_{s},s}}|_{s=0}=0\, .
\end{align*}
Hence there is a maximal $0<\tau $ such that the $(\mathrm{B}_{\mathrm{l}}$, $\mathrm{B}_{\mathrm{g}})$ resp. $(\mathrm{B}_{\mathrm{l}}'$, $\mathrm{B}_{\mathrm{g}})$ conditions, to be defined below, are satisfied with $s=\tau$ as long as $\eps \ll 1$. Here the $(\mathrm{B}_{\mathrm{l}}$, $\mathrm{B}_{\mathrm{g}})$ resp. $(\mathrm{B}_{\mathrm{l}}'$, $\mathrm{B}_{\mathrm{g}})$ conditions are defined as
\begin{itemize}
\item[$(\mathrm{B}_{\mathrm{l}})$] \hspace{1in}
$M_{{s}}^{(\mathrm{l})} \leq 2A\eps \| L_0^{-1}P_0^c \partial_s \vec{\psi}_{E_{s},s} |_{s=0} \|_{{H^2} \cap W^{2,1}}+\epsilon $  ,
\item[$(\mathrm{B}_{\mathrm{g}})$] \hspace{1in}
$M_{{s}}^{(\mathrm{g})}\leq  \eps^{\frac 2 3}$\, ,
\end{itemize}
where $A>1$ is the constant in the dispersive Estimate \eqref{dispest} below. Note that if $L_0^{-1}P_0^c \partial_s \vec{\psi}_{E_0,s}|_{s=0}=0$ then \Bsl $\ $ is replaced by
\begin{itemize}
\item[$(\mathrm{B}_{\mathrm{l}}')$] \hspace{1in}
$M_{{s}}^{(\mathrm{l})} \leq \eps$\, .
\end{itemize}

With these at hand we may turn to the analysis of $\distt$ itself. The key fact is that $\distt$ lies in the continuous subspace of the linear operator $L_{s}$, which allows us to apply dispersion estimates. which in turn is generated by some linear operator approximately $L_s$. This in turn is generated by some linear operator which approximates $L_s.$
Together with bootstrap assumption \Bs, this will enable us to improve the estimates for $\distt$ on the small interval $[0,\tau]$, as long as $\tau\leq s_0$ with $s_0>0$ being small and independent of $\epsilon$. The result is the following proposition.
\begin{prop}\label{gpshortestlem}There exists $s_{0}>0$, independent of $\epsilon$ (provided that it is sufficiently small), such that if $(\mathrm{B}_{\mathrm{l}}$, $\mathrm{B}_{\mathrm{g}})$ resp. $(\mathrm{B}_{\mathrm{l}}'$, $\mathrm{B}_{\mathrm{g}})$ hold for $s\leq s_{0}$, then the better estimates
\begin{align}
M_{s}^{(\mathrm{l})} &\leq  A\eps  \| L_0^{-1}P_0^c \partial_s \vec{\psi}_{E_0,s} |_{s=0} \|_{{H^2} \cap W^{2,1}}+\frac{2}{3}\epsilon\, , \label{bootstraploc}\\
\M& \lesssim \eps \label{bootstrapglob}
\end{align}
are true for all $\eps \ll 1$. If $L_0^{-1}P_0^c \partial_s \vec{\psi}_{E_0,s}|_{s=0}=0$ then \eqref{bootstraploc} is replaced by
\begin{align} \label{locestrepl}
M_{s}^{(l)}\lesssim \eps^{2}\, .
\end{align}
\end{prop}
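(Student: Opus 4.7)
The plan is a standard bootstrap on \eqref{dispeq11} run in parallel with the modulation equations \eqref{modeq1}-\eqref{modeq2}. One assumes $(\mathrm{B}_\mathrm{l})$ and $(\mathrm{B}_\mathrm{g})$ (resp.\ $(\mathrm{B}_\mathrm{l}')$) hold on some $[0,\tau]$ with $\tau\le s_0$, and derives strictly better estimates. Smallness of $s_0$ serves two purposes: keeping $E_s^\eps$ close enough to $E_0$ that $L_s$ is a bounded perturbation of $L_0$ (so that the spectral picture of Lemma \ref{chapGP:projexpr} is preserved), and absorbing terms of the form $Cs_0 M_s^{(\cdot)}$ arising from time-convolutions into a fraction of themselves.

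First I would analyse the modulation system. The $2\times 2$ coefficient matrix on the left of \eqref{modeq1}-\eqref{modeq2} has diagonal dominated by $\langle\psi,\partial_E\psi\rangle$ and $\langle\partial_E\psi,\psi\rangle/\eps$, both nonzero by the bifurcation asymptotics in Proposition \ref{groundstatefamily}, while off-diagonal entries are $\mathcal O(M_s^{(\mathrm{l})})$ and the determinant is $\sim 1/\eps$. Using \eqref{prodest3}-\eqref{prodest5} together with the localisation of $\psi$, the nonlinear pairings $\langle\psi,N\rangle$ and $\langle\partial_E\psi,N\rangle$ are bounded by $\mathcal O\bigl((M_s^{(\mathrm{g})})^{2}\bigr)$, while the $\partial_s$-pieces are $\mathcal O(1)+\mathcal O(M_s^{(\mathrm{l})})$. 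Inversion then yields
\[
|\dot E_s^\eps|\lesssim 1+\tfrac{1}{\eps}(M_s^{(\mathrm{g})})^{2},\qquad |\dot\gamma_s^\eps|\lesssim \eps+(M_s^{(\mathrm{g})})^{2},
\]
which, under the bootstrap, is $|\dot E_s^\eps|=\mathcal O(1)$ and $|\dot\gamma_s^\eps|=\mathcal O(\eps^{2})$ on $[0,s_0]$; in particular $|E_s^\eps-E_0|\lesssim s_0$.

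The weighted bound is closed by Duhamel applied to \eqref{dispeq11}. I would construct the propagator of $\tfrac{1}{\eps}L_sP_s^c$ by freezing the generator at each time and absorbing the commutator corrections as additional order-$\eps$ forcing (they are controlled because $\dot L_s=\mathcal O(1)$). The dispersive estimate \eqref{dispest} then gives, term by term: the linear evolution of $\vec{\tilde\phi}_0=-\eps L_0^{-1}P_0^c\partial_s\vec\psi|_{s=0}$ contributes the leading $A\eps\|L_0^{-1}P_0^c\partial_s\vec\psi|_{s=0}\|_{H^2\cap W^{2,1}}$ (or $0$ in the degenerate case relevant to $(\mathrm{B}_\mathrm{l}')$); the ``main'' forcing $\eps P_s^c\tfrac{d}{ds}(L_s^{-1}P_s^c\partial_s\vec\psi)$ produces $\mathcal O(\eps^{2})$ after convolution against $\langle\cdot/\eps\rangle^{-3/2}$; the $\dot\gamma_s^\eps/\eps$ and $\dot P_s^d$ terms give $\mathcal O(\eps)\,M_s^{(\mathrm{l})}$; and the nonlinearity contributes $\mathcal O\bigl((M_s^{(\mathrm{g})})^2 M_s^{(\mathrm{l})}\bigr)$ via the product estimates together with the local decay of $\psi$. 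Choosing $s_0$ small enough to absorb the terms proportional to $M_s^{(\mathrm{l})}$ into $\tfrac{1}{3}M_s^{(\mathrm{l})}$ (this is where the $s_0$-conditions of \eqref{deltachoice}-\eqref{chooseDeltaA} enter) yields \eqref{bootstraploc}; in the degenerate case the leading $\mathcal O(\eps)$ piece vanishes and the remaining $\mathcal O(\eps^{2})$ contribution gives \eqref{locestrepl}.

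Finally, the global bound \eqref{bootstrapglob} comes from an $H^{2}$ energy estimate: pairing \eqref{dispeq11} with $(1+L_0^*L_0)\vec{\tilde\phi}_s$ (equivalent to the $H^2$ inner product by Lemma \ref{lem_norm}(i)), the singular term $\tfrac{1}{\eps}L_s\vec{\tilde\phi}_s$ contributes only a bounded commutator since $L_s$ generates a uniformly bounded group in $H^{2}$, the forcing is $\mathcal O(\eps)$ in $H^{2}$, and the nonlinearity is $\mathcal O\bigl(\eps^{-1}(M_s^{(\mathrm{g})})^{3}\bigr)=\mathcal O(\eps^{1/3})M_s^{(\mathrm{g})}$ under $(\mathrm{B}_\mathrm{g})$. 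Gronwall on $[0,s_0]$ closes the bootstrap. The principal obstacle is the rigorous handling of the non-autonomous, non-self-adjoint propagator of $\tfrac{1}{\eps}L_s$: the dispersive estimate \eqref{dispest} is stated for the frozen generator, and one must verify that the commutator correction accumulated over $(s-s')/\eps\le s_0/\eps$ does not spoil the $\langle(s-s')/\eps\rangle^{-3/2}$ decay; it is precisely this balance that dictates the smallness of $s_0$.
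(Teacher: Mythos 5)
Your proposal identifies the right architecture (bootstrap on \eqref{dispeq11} plus modulation equations, Duhamel plus dispersive decay) but leaves unresolved exactly the step the paper is designed to avoid, and you flag it yourself: how to handle the non-autonomous propagator of $\frac{1}{\eps}L_sP_s^c$ without spoiling the $\langle (s-s')/\eps\rangle^{-3/2}$ decay. Freezing the generator and ``absorbing commutator corrections as additional order-$\eps$ forcing'' does not work as stated: over a fast-time window of length $s_0/\eps$ a Dyson expansion around a frozen $L_{s^*}$ accumulates $\mathcal O(1)$ corrections, and it is not at all clear that the iterated integrals preserve the $-3/2$ polynomial decay. The paper circumvents this entirely through a different choice of reference generator: it rewrites \eqref{dispeq11} as \eqref{phitildeq11} with generator $-J(H_{E_s^\eps,0}+\dot\gamma_s^\eps)$, where $H_{E_s^\eps,0}=-\Delta+V_0-E_s^\eps$ depends on $s$ \emph{only through a scalar shift}. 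Consequently the propagator $U_0(s,s')$ factors exactly, see \eqref{eq:relateHu}, into a $2\times 2$ unitary scalar phase times the autonomous group $\e^{-\frac{1}{\eps}(s-s')(-\Delta+V_0)J}$, and Theorem~\ref{cor1} applies verbatim with no commutator corrections whatsoever. The entire mismatch $L_s-(-JH_{E_s^\eps,0})$, given by \eqref{chapgp:diffexpre}, is a spatially localized bounded potential of operator norm $\delta(0,s)$ that enters the Duhamel integral as an explicit forcing and is absorbed by choosing $s_0$ small via \eqref{deltachoice}--\eqref{chooseDeltaA}. This is the key idea missing from your argument; without it the bootstrap does not close.

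A secondary gap is your treatment of the global $H^2$ bound \eqref{bootstrapglob}. You propose an energy identity pairing with $(1+L_0^*L_0)\vec{\tilde\phi}_s$ and assert that ``$L_s$ generates a uniformly bounded group in $H^2$''. This is not established: $L_s$ is real-linear and not anti-self-adjoint (the operators $L^+$ and $L^-$ in \eqref{eq_lin} differ), so $H^2$-boundedness of the group is nontrivial, and the commutator $[L_0^*L_0,\tfrac{1}{\eps}L_s]$ carries the singular $\tfrac{1}{\eps}$ and is not obviously bounded. The paper instead estimates the \emph{same} Duhamel formula \eqref{initial}--\eqref{nonlinear} directly in $\|\cdot\|_{H^2}$, using \eqref{propest} for the free evolution and, crucially, feeding the already-established weighted bound $\|\vec{\tilde\phi}_{s'}\|_{H^{2,-\sigma}}\lesssim \eps\langle s'/\eps\rangle^{-3/2}+\eps^2$ into the $\tfrac{1}{\eps}$-singular terms; this makes the $\eps^{-1}$-prefactor harmless after the time integration of Lemma~\ref{integral}. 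Your energy argument would need a separate, unproved lemma.

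Your analysis of the modulation equations and the estimates of the nonlinear forcing are consistent with the paper's (quadratic-in-$\phi$ terms carry a rapidly decaying $\psi$ factor, cubic terms use $(\mathrm{B}_\mathrm{g})$), so those parts are fine. The two points above, however, are genuine gaps, and the first one is the core insight of the proof.
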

The proposition will be proved in the rest of the section.

Assuming Proposition \ref{gpshortestlem} holds, then we can prove \eqref{distestn1} of Theorem \ref{refor} by a continuity argument.

In order to prove \eqref{bootstraploc}, \eqref{bootstrapglob} we start with establishing controls for the modulation parameters. This will also yield \eqref{distEest} and \eqref{gammaquadest} of Theorem \ref{refor}.

\subsection{Control of modulation parameters $E_s^{\epsilon}$ and $\gamma_{s}^{\epsilon}$, proof of \eqref{distEest} and \eqref{gammaquadest}} \label{contrmodpar}
Recall the decomposition of the solution in \eqref{eq_decomp}. The parameters $E_{s}^{\epsilon}$ and $\gamma_{s}^{\epsilon}$ satisfy the equations \eqref{modeq1} and \eqref{modeq2}. The function $\dist$ is further decomposed in \eqref{finerdecomp}, and the function ${\vec{\tilde{\phi}}}_s$ satisfies Equation \eqref{dispeq11}.

We start with some preliminary estimates. It is easy to see that every scalar product in \eqref{modeq1} and \eqref{modeq2} which involves $\phi_s$ is of order $\eps$ whenever assumption $(\mathrm{B}_{\mathrm{l}})$ resp. $(\mathrm{B}_{\mathrm{l}}')$ holds. For $\eps \ll 1$, it follows that
\begin{align}
|\dot{E}^\eps_s|&\lesssim 1\, ,  \\
|\dot{\gamma}^\eps_s|&\lesssim \epsilon^2 \, . \label{gammaest} 
\end{align}
Note that \eqref{gammaest} is the desired bound in \eqref{gammaquadest}.

Now we prove \eqref{distEest}. Recall that the scalar function $E_s$, which is independent of $\epsilon$, is the function satisfying 
$\|\Psi_{E_s,s}\|^2_2=\|\Psi_{E_0,0}\|^2_2=\eta, $ see (iii) of Proposition \ref{groundstatefamily}.
In the next result we measure the difference between $E_s$ and $E_s^{\epsilon}$, or is to prove \eqref{distEest}.
\begin{lem}\label{energylincontrol} If \Bsl  \ resp. $(\mathrm{B}_{\mathrm{l}}')$ holds in $[0,s]$, then we have
\begin{align*} 
|E^{\eps}_{s}-E_{s}|\lesssim \eps\, ,
\end{align*}
uniformly in $s$.
\end{lem}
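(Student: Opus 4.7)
The plan is to extract $E^\eps_s - E_s$ from the conservation of mass for \eqref{eq_GP} and the orthogonality condition built into the decomposition \eqref{eq_decomp}. The key observation is that because the nonlinear Schr\"odinger flow preserves the $L^2$ norm, we have
\begin{align*}
\eta = \|\Psi_0\|_2^2 = \|\Psi_s\|_2^2 = \|\psi_{E^\eps_s,s}+\phi_s\|_2^2 = \|\psi_{E^\eps_s,s}\|_2^2 + 2\,\mathfrak{R}\langle \psi_{E^\eps_s,s},\phi_s\rangle + \|\phi_s\|_2^2 \, ,
\end{align*}
where the phase factor in \eqref{eq_decomp} drops out of the norm.

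The first step is to show that the cross term vanishes. Since $\psi_{E^\eps_s,s}$ is real, $\mathfrak{R}\langle \psi_{E^\eps_s,s},\phi_s\rangle = \langle \psi_{E^\eps_s,s},\phi_{1,s}\rangle$, and the orthogonality $P^d_s \vec{\phi}_s=0$ combined with the explicit form \eqref{eq:proj} of the Riesz projection forces exactly $\langle \psi_{E^\eps_s,s},\phi_{1,s}\rangle=0$. Hence
\begin{align*}
\|\psi_{E^\eps_s,s}\|_2^2 = \eta - \|\phi_s\|_2^2 = \|\psi_{E_s,s}\|_2^2 - \|\phi_s\|_2^2 \, .
\end{align*}

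The second step is to convert this into an estimate on $E^\eps_s-E_s$ via the monotonicity of the mass along the ground state branch. From Proposition~\ref{groundstatefamily}(i), the leading-order expansion $\psi_{E,s} \sim \sqrt{(E_{*,s}-E)/b}\cdot v_{*,s}/\sqrt{\langle v_{*,s}^2,v_{*,s}^2\rangle}$ gives $\partial_E \|\psi_{E,s}\|_2^2 \ne 0$ for $\eta\ll 1$, uniformly in $s$. Both $E_s$ and $E_s^\eps$ lie in the small neighborhood of $E_{*,s}$ on which $E\mapsto\|\psi_{E,s}\|_2^2$ is a bi-Lipschitz diffeomorphism, so the mean value theorem yields
\begin{align*}
|E^\eps_s - E_s| \lesssim \bigl|\|\psi_{E^\eps_s,s}\|_2^2 - \|\psi_{E_s,s}\|_2^2\bigr| = \|\phi_s\|_2^2 \, .
\end{align*}

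The final step is to bound $\|\phi_s\|_2$ using the decomposition \eqref{finerdecomp} and the bootstrap assumptions. From \eqref{finerdecomp},
\begin{align*}
\|\phi_s\|_2 \le \eps\,\|L_s^{-1}P_s^c\partial_s\vec{\psi}_{E_s,s}\|_2 + \|\tilde{\vec{\phi}}_s\|_2 \lesssim \eps + \M \lesssim \eps^{2/3}
\end{align*}
under \Bs, so $\|\phi_s\|_2^2 \lesssim \eps^{4/3}$, which is comfortably better than the required $\eps$. I do not expect any serious obstacle here; the main point is simply recognizing that mass conservation together with the choice of orthogonality in \eqref{eq_decomp} makes the cross term vanish identically, so no delicate cancellation analysis is needed.
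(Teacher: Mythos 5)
Your argument is correct, and it takes a genuinely different route from the paper's. The paper proceeds through the modulation equation \eqref{modeq1}: it shows that $\dot E^\eps_s = f(E^\eps_s,s) + \mathcal O(\eps)$ with $f(E,s) = -\langle\psi_{E,s},\partial_s\psi_{E,s}\rangle/\langle\psi_{E,s},\partial_E\psi_{E,s}\rangle$, derives $\dot E_s = f(E_s,s)$ by differentiating the constraint $\|\psi_{E_s,s}\|_2^2\equiv\eta$, and then closes a Gr\"onwall inequality using $E^\eps_0 = E_0$ and Lipschitz continuity of $f$ in $E$. You instead exploit the exact conservation of $\|\Psi_s\|_2$ under the GP flow together with the orthogonality $\langle\psi_{E^\eps_s,s},\phi_{1,s}\rangle=0$ built into $P^d_s\vec\phi_s=0$ to get the algebraic identity $\|\psi_{E^\eps_s,s}\|_2^2 = \eta - \|\phi_s\|_2^2$, and then invert the bi-Lipschitz mass map $E\mapsto\|\psi_{E,s}\|_2^2$ along the branch. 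Your approach is more structural and avoids the ODE/Gr\"onwall machinery; it even yields the slightly stronger bound $|E^\eps_s-E_s|\lesssim\|\phi_s\|_2^2\lesssim\eps^{4/3}$, whereas the paper's argument only delivers $\mathcal O(\eps)$.

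One caveat worth stating explicitly: to control $\|\phi_s\|_2^2$ you invoke the global bootstrap hypothesis $(\mathrm{B}_{\mathrm{g}})$, since $(\mathrm{B}_{\mathrm{l}})$ only bounds $\tilde{\vec\phi}_s$ in the local $H^{2,-\sigma}$ norm and gives no control on its unweighted $L^2$ size. The lemma as stated assumes only $(\mathrm{B}_{\mathrm{l}})$ (resp.\ $(\mathrm{B}_{\mathrm{l}}')$). In the paper the two bootstrap hypotheses are always imposed jointly, so this is in practice harmless; but your proof would be cleaner if you record that it uses $(\mathrm{B}_{\mathrm{g}})$ as well, or else replace the $\|\cdot\|_2$ bound on $\tilde{\vec\phi}_s$ by the a priori smallness of $\|\phi_s\|_2$ that is already needed for Lemma~\ref{lem_decomp} to furnish the decomposition in the first place.
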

\begin{proof}
We apply a Gr\"onwall-type argument. The function $$f(E,s):=-\frac{\langle \psi_{E,s},\partial_{s}\psi_{E,s}\rangle}{\langle \psi_{E,s},\partial_{E}\psi_{E,s}\rangle}$$ is $C^{1}$ in $s$ and smooth in $E$ by Proposition~\ref{groundstatefamily}.
By \Bsl \ and (\ref{modeq1}) 
\begin{align}
\dot{E}^{\eps}_{s} = f(E_{s}^{\eps},s) + {\mathcal{O}}(\eps)\, ,\label{eq:EEpsS}
\end{align}
where $\mathcal{O}(\eps)$ is uniformly bounded in $s$. 

To derive an equation for $E_s$, we take a $s-$derivative on the identity $\langle \psi_{E_{s},s}, \psi_{E_{s},s}\rangle= \langle \psi_{E_{0},0}, \psi_{E_{0},0}\rangle=\eta$ to find
\begin{align}
\dot{E}_{s}= f(E_{s},s)\, .
\end{align}
Subtract this from \eqref{eq:EEpsS} to obtain
\begin{align}
\dot{E}^{\eps}_{s}-\dot{E}_{s}&= f(E_{s},s)-f(E^{\eps}_{s},s)+{\mathcal{O}}(\eps) \nonumber \\
&\leq C\left( |E^{\eps}_{s}-E_{s}| + \eps\right). \label{energyestimate}
\end{align}
Here $C$ is a constant independent of $\epsilon$. Using $E^{\eps}_{0}=E_{0}$ (see Theorem \ref{mainthm})
\begin{align*}
|E_{s}^{\eps}-E_{s} |\leq C \eps s
+C\int_{0}^{s}|E_{s'}^{\eps}-E_{s'}|ds'\, .
\end{align*}
This last inequality yields for $x(s):= \e^{-Cs}\int_{0}^{s}ds' \ |E_{s'}^{\eps}-E_{s'}| $
\begin{align*}
\dot{x}(s) \leq  C \eps s \e^{-Cs}
\end{align*}
which, after integration, implies the desired claim.
\end{proof}

\subsection{Proof of Proposition \ref{gpshortestlem}}
We start from the equation for $\dot{{\vec{\tilde\phi}}}_s$ in \eqref{dispeq11}.

In the proof we rely on propagator estimates generated by certain linear operators. Here we do not choose $L_s$ on the right hand side of \eqref{dispeq11} since its time-dependence will complicate our analysis. Instead we approximate it by a linear operator $-JH_{E_{s}^{\eps},0}$ with $H_{E_{s}^{\eps},0}$ defined as
\begin{align} \label{chapgp:refham}
H_{E^{\eps}_{s},0}:=-\Delta+
V_{0}-E^{\eps}_{s}=H_{0}-E^{\eps}_{s},
\end{align} and the linear Schr\"odinger operator $H_{0}$ naturally defined.
We rewrite the equation for $\dot{{\vec{\tilde\phi}}}_s$ as
\begin{align} \label{phitildeq11}
&\dot{{\vec{\tilde\phi}}}_s=-\frac{1}{\varepsilon}J(H_{E_{s}^{\eps},0}+\dot{\gamma}^{\eps}_s){\vec{\tilde\phi}}
_s   \\
& \phantom{aaa} +   \frac{1}{\epsilon}(L_s+JH_{E_{s}^{\eps},0}){\vec{\tilde\phi}}
_s    + \frac{\dot{\gamma}^{\eps}_s}{\eps}P_s^dJ\distt   -P_{s}^{d}\dot{P}^d_s{\vec{\tilde\phi}}_s \nonumber  \\
& \phantom{aaa}
-\dot{\gamma}^{\eps}_sP_s^cJL_s^{-1}P_s^c\partial_{s}\gsv 
-
\epsilon
P_s^c\frac{d}{ds}(L_s^{-1}{P_s^c\partial_{s}\gsv}) -\frac{1}{\epsilon}P_s^cN(\gsv,\varepsilon
L_s^{-1}P_s^c\partial_{s}\gsv+{\vec{\tilde\phi}}_s) \, . \nonumber 
\end{align}
The difference between $L_s$ and $-JH_{E_{s}^{\eps},0}$,
\begin{align}\label{chapgp:diffexpre}
L_s-\left(-JH_{E_{s}^{\eps},0}\right)=- J\begin{pmatrix} 
\V-V_{0}+3b\gs^2  & 0 \\ 0 & 
\V-V_{0}+b\gs^2 \end{pmatrix}\, ,
\end{align} 
is small if $s$ is small, and decays at spatial infinity. 

Apply the Duhamel's principle and apply $P_{H_{0}}^{c}$ on \eqref{phitildeq11} to find,
\begin{align} \label{initial}
P^c_{H_0}{{\vec{\tilde\phi}}}_s=& U_0(s,0)P^c_{H_0}{{\vec{\tilde\phi}}}_{0} \\ \label{lin}
&+ \int_0^sds' \ U_0(s,s') \Big(  \frac{1}{\epsilon}P^c_{H_0}(L_{s'}+H_{E^{\eps}_{s},0}){\vec{\tilde{\phi}}}
_{s'}+\frac{\dot{\gamma}^{\eps}_{s'}}{\eps}P_{H_0}^cP_{s'}^dJ\dists-P^c_{H_0}P^{d}_{s'}\dot{P}^d_{s'}{\vec{\tilde{\phi}}}_{s'}  \\
&\phantom{aaaaaaaaaaaaaaa} -\dot{\gamma}^{\eps}_{s'}P_{H_0}^cP_{s'}^cJL_{s'}^{-1}P_{s'}^c\partial_{s'}\gsvs-\epsilon P^c_{H_0}P_{s'}^c\frac{d}{ds'}(L_{s'}^{-1}{P_{s'}^c\partial_{s'} \gsvs}) \label{inhomo}
\\&\phantom{aaaaaaaaaaaaaaa}-\frac{1}{\epsilon}P^c_{H_0}P_{s'}^cN(\gsvs,\varepsilon
L_{s'}^{-1}P_{s'}^c\partial_{s'}\gsvs+{\vec{\tilde{\phi}}}_{s'}) \Big)\, .\label{nonlinear}
\end{align}

Here, the propagator $U_{0}(s,s'),\ s\geq s'\geq 0$ is generated by the linear operator $H_{E_{s}^{\eps},0}+\dot{\gamma}^{\eps}_{s}$ defined in \eqref{chapgp:refham}. Its mathematical definition is
\begin{align*}
\eps\partial_{s}U_{0}(s,s')=&-J(H_{E_{s}^{\eps},0}+\dot{\gamma}_{s}^{\eps})U_{0}(s,s')\, ,\\
U_{0}(s',s')=&\mathrm{Id}.
\end{align*}
The time decay estimates of various terms \eqref{initial}-\eqref{nonlinear} depend critically on the estimation of $U_{0}(s,s')$. Using the definition of $H_{E_{s}^{\eps},0}$, we cast the expression into a convenient form
\begin{align}\label{eq:relateHu}
U_{0}(s,s')=e^{\frac{1}{\epsilon} [\int_{s'}^{s} E_{z}^{\epsilon} \ dz-\gamma_{s}^{\epsilon}+\gamma_{s'}^{\epsilon}]J} e^{-\frac{1}{\epsilon} (s-s')(-\Delta+V_{0})J}.
\end{align} The first factor $e^{\frac{1}{\epsilon} [\int_{s'}^{s} E_{z}^{\epsilon} \ dz-\gamma_{s}^{\epsilon}+\gamma_{s'}^{\epsilon}]J}$ is a $2\times 2$ scalar unitary matrix, since $J$ is anti-self-adjoint.
Hence to estimate $U_{0}(s,s'),$ it suffices to estimate $e^{i\frac{1}{\epsilon} (s-s')(-\Delta+V_{0})}$. Moreover note that $U_{0}(s,s')$ commutes with $P_{H_{0}}^{d}$ and $P_{H_{0}}^{c}$, the projections onto the discrete and continuous subspace of $H_{0}$, respectively.

To estimate \eqref{initial}-\eqref{nonlinear} we rely on appropriate propagator estimates. Here to facilitate later discussions we consider cases more general than $e^{i\frac{1}{\epsilon} (s-s')(-\Delta+V_{0})}$, namely $e^{i\frac{1}{\epsilon} (s-s')(-\Delta+V_{\tau})},\ \tau\in [0,1].$

\begin{thm}[Goldberg]  \label{cor1} 
Under conditions $(\mathrm{H}_{\mathrm{r}}, \mathrm{H}_{\mathrm{d}})$ it holds for arbitrary $\tau \in [0,1]$ and $t \in \R$ that
\begin{align} \label{goldbergest}
\|\e^{-\i t H_\tau} P_{H_\tau}^c \|_{{L^1} \rightarrow L^\infty} &\lesssim |t|^{-\frac{3}{2}}\, \\
 \label{propest}
\|\e^{-\i\frac{t}{\eps}H_{\tau}}P^c_{H_\tau}\chi\H &\simeq \|P^c_{H_\tau}\chi\|_{H^2} \lesssim  \|\chi\|_{H^2}\, ,  \\
\|\e^{-\i\frac{t}{\eps}H_{\tau}}P^c_{H_\tau}\chi\Hls & \leq A \langle \frac{t}{\eps} \rangle^{-\frac 3 2}\|\chi\|_{{H^2}\cap W^{2,1}}\, , \label{dispest}
\end{align}
where $\|\chi\|_{H^2 \cap W^{2,1}}:=\| \chi \|_{H^2} + \|\chi\|_{W^{2,1}}$. The constant $A$ and the multiplicative constant in (\ref{propest}) can be chosen independent of $\tau$ and $t$.
\end{thm}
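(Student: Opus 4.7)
The plan is to prove the three estimates in order, citing (\ref{goldbergest}) from Goldberg's paper \cite{Goldberg:2006wz} and deriving (\ref{propest}), (\ref{dispest}) from it. For (\ref{goldbergest}) itself the hypotheses $(\mathrm{H}_{\mathrm{d}})$ and $(\mathrm{H}_{\mathrm{r}})$ are precisely what Goldberg requires: sufficient polynomial decay of $V_\tau$ (via $V_\cdot\in H^{2,\sigma}$ with $\sigma>2$) together with the absence of a zero-energy resonance. The only nontrivial point is to obtain a constant that is uniform in $\tau\in[0,1]$, on which more below.

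Bound (\ref{propest}) is a direct consequence of the spectral theorem. Since $H_\tau$ is self-adjoint on $L^2(\R^3)$, the propagator $e^{-\i tH_\tau/\eps}$ is unitary and commutes with both $H_\tau$ and $P^c_{H_\tau}$. Combined with the equivalence $\|\cdot\H\simeq\|\cdot\|_2+\|H_\tau\cdot\|_2$ from Lemma~\ref{lem_norm}(i), this gives $\|e^{-\i tH_\tau/\eps}P^c_{H_\tau}\chi\H\simeq\|P^c_{H_\tau}\chi\H\lesssim\|\chi\H$, where the last bound uses that $P^c_{H_\tau}=\id-|v_{*,\tau}\rangle\langle v_{*,\tau}|$ is bounded on $H^2$; the bound state $v_{*,\tau}$ is $H^{2,l}$-regular for every $l$ by standard elliptic arguments applied under $(\mathrm{H}_{\mathrm{d}})$.

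For (\ref{dispest}) the plan is to split into two regimes. When $|t|/\eps\leq 1$ the weight $\langle t/\eps\rangle^{-3/2}$ is of order one, and the claim reduces to (\ref{propest}) combined with the trivial embedding $H^2\hookrightarrow H^{2,-\sigma}$. When $|t|/\eps\geq 1$, I would invoke Lemma~\ref{lem_norm}(ii) with $l=-\sigma$ to decompose, for $u:=e^{-\i tH_\tau/\eps}P^c_{H_\tau}\chi$,
\[ \|u\Hls\simeq\|\japx^{-\sigma}u\|_2+\|\japx^{-\sigma}\Delta u\|_2. \]
Since $\sigma>3/2$, $\japx^{-\sigma}\in L^2(\R^3)$, so Cauchy--Schwarz and (\ref{goldbergest}) give $\|\japx^{-\sigma}u\|_2\leq\|\japx^{-\sigma}\|_2\|u\|_\infty\lesssim(t/\eps)^{-3/2}\|\chi\|_1$. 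For the second summand I would substitute $\Delta=V_\tau-H_\tau$: the potential piece is controlled by $\|V_\tau\|_\infty\|\japx^{-\sigma}u\|_2$ and has already been handled, while the Hamiltonian piece becomes $H_\tau u=e^{-\i tH_\tau/\eps}P^c_{H_\tau}(H_\tau\chi)$ by commutativity, to which the same Cauchy--Schwarz plus Goldberg argument yields $\|\japx^{-\sigma}H_\tau u\|_2\lesssim(t/\eps)^{-3/2}\|H_\tau\chi\|_1\lesssim(t/\eps)^{-3/2}\|\chi\|_{W^{2,1}}$, using $\|H_\tau\chi\|_1\leq\|\Delta\chi\|_1+\|V_\tau\|_\infty\|\chi\|_1$ and $(\mathrm{H}_{\mathrm{d}})$.

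The main obstacle I expect is the uniformity in $\tau$ of the constants in (\ref{goldbergest}), and consequently in (\ref{dispest}), since Goldberg's theorem is usually stated for a fixed potential. This should follow from compactness of $[0,1]$ together with $V_\cdot\in C([0,1];H^{2,\sigma})$ from $(\mathrm{H}_{\mathrm{d}})$, the uniform spectral gap $G_0$ from $(\mathrm{H}_{\mathrm{e}})$, and a uniform quantitative version of the no-resonance assumption $(\mathrm{H}_{\mathrm{r}})$; the latter follows by continuity of the boundary-value resolvent of $H_\tau$ at zero energy on appropriate weighted spaces, upon tracking the dependence of Goldberg's constant on this data.
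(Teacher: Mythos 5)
Your derivations of \eqref{propest} and \eqref{dispest} run along the same lines as the paper's: unitarity of the propagator plus the norm equivalence $\|\chi\|_{H^2}\simeq\|\chi\|_2+\|H_\tau\chi\|_2$ for \eqref{propest}, and Cauchy--Schwarz against $\japx^{-\sigma}\in L^2$ together with the $L^1\to L^\infty$ bound \eqref{goldbergest} for \eqref{dispest}, with the $H^2\cap W^{2,1}$ norm arising from commuting $H_\tau$ through the propagator (the paper works directly with $H_\tau$ in place of your $\Delta=V_\tau-H_\tau$ substitution, but this is cosmetic, and your explicit small-time/large-time split is what the paper achieves implicitly by combining \eqref{estunitary} with \eqref{estdispersive}).

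Where you genuinely diverge from the paper is in the uniformity of the constant $A$ in $\tau$. You propose to track the dependence of Goldberg's constant on the boundary-value resolvent of $H_\tau$ at zero energy and argue continuity of this data in $\tau$, followed by compactness of $[0,1]$. The paper instead avoids opening up Goldberg's proof: it establishes a self-contained \emph{stability} lemma, using Duhamel's formula twice to compare $\e^{-\i Ht}P_H^c$ with $\e^{-\i H_0 t}P_{H_0}^c$ for nearby potentials $V,V_0$, and shows the dispersive constant $C$ can be chosen to converge to $C_0$ as $\|V-V_0\|_{H^{2,\sigma}}\to 0$. That perturbation argument is more robust, since it does not require any quantitative resonance information beyond the qualitative assumption $(\mathrm{H}_{\mathrm{r}})$ at each fixed $\tau$, whereas your route would require a uniform lower bound on ``distance to resonance'' and careful bookkeeping of how Goldberg's implicit constants depend on it. Your sketch as written identifies the right obstacle but is not yet a proof; if you want to execute it you should instead adopt the Duhamel comparison, which is exactly what the appendix carries out.
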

The proof is based on results in \cite{Goldberg:2006wz}, and will be given in Section \ref{sec:B}. 
\color{black}

We are now ready to estimate the various terms \eqref{initial}-\eqref{nonlinear}, the local estimates for (\ref{initial}-\ref{nonlinear}) are collected in the following lemma. Its proof is provided in the next subsection.
Recall that $\|\psi_{E_0,0}\|_2^2=\eta.$
\begin{lem}\label{lemtriplestar} Assume $(\mathrm{B}_{\mathrm{l}}$, $\mathrm{B}_{\mathrm{g}})$ resp. $(\mathrm{B}_{\mathrm{l}}'$, $\mathrm{B}_{\mathrm{g}})$ in the time interval $[0,s]$. We have that if $\eta,\ \epsilon \ll 1 $ then in $[0,\ s],$
\begin{align*}
\| (\ref{initial})\Hls & \leq A  \jte^{-\frac{3}{2}} \eps  \|   L_{0}^{-1}P_{0}^{c}\partial_{t}\gsv|_{t=0}\|_{H^{2}\cap W^{2,1}}   \simeq \jte^{-\frac{3}{2}}    \eps  \, , \\
\| (\ref{lin})\Hls & \lesssim {\delta}(0,s)\ (\eps \jte^{-\frac{3}{2}}+    \eps^{2}  ) M^{(\frac{3}{2},\mathrm{l})}_{s} + \eps^{2}\, , \\
\| (\ref{inhomo})\Hls & \lesssim \eps^{2} \, ,  \\
\| (\ref{nonlinear})\Hls & \lesssim \eps^{2} \, .
\end{align*}
 If $L_0^{-1}P_0^c \partial_s \vec{\psi}_{E_0,s}|_{s=0}=0$ then the first two estimates are replaced by
\begin{align*}
\| (\ref{initial})\Hls & = 0\, , \\
\| (\ref{lin})\Hls & \lesssim \delta(0,s)\ M_{s}^{(\mathrm{l})} +\eps^{2}   \, .
\end{align*}

\end{lem}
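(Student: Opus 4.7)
Each of the four bounds will be obtained by applying the dispersive estimate \eqref{dispest} from Theorem~\ref{cor1} to the corresponding right-hand side of \eqref{initial}--\eqref{nonlinear}. By the factorisation \eqref{eq:relateHu} the propagator $U_0(s,s')$ differs from $e^{-\mathrm{i}(s-s')H_0/\eps}$ only by a unitary $2\times 2$ scalar matrix, so Theorem~\ref{cor1} applies verbatim and yields a kernel bound $A\langle (s-s')/\eps\rangle^{-3/2}$ from $H^2\cap W^{2,1}$ to $H^{2,-\sigma}$. Repeated use of the elementary fact $\int_0^s \langle (s-s')/\eps\rangle^{-3/2}ds' \lesssim \eps$ will convert each integrated power $\eps^k$ in the source into $\eps^{k+1}$ in the output.

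The two ``easy'' bounds are those for \eqref{initial} and \eqref{inhomo}. For \eqref{initial} a direct application of \eqref{dispest} with $\chi=\tilde{\vec\phi}_0$ and the explicit form of the initial data \eqref{gpinitialdata} (which carries an overall $\eps$) immediately yields the claimed $A\langle s/\eps\rangle^{-3/2}\eps$, or exactly zero when $L_0^{-1}P_0^c\partial_s\vec\psi_{E_s,s}|_{s=0}=0$. For \eqref{inhomo}, the first summand is proportional to $\dot\gamma^\eps_{s'}=\mathcal{O}(\eps^2)$ by the estimate \eqref{gammaest} already established in Section~\ref{contrmodpar}, while the second carries an explicit $\eps$; the remaining factors are uniformly controlled in $H^2\cap W^{2,1}$ using Proposition~\ref{groundstatefamily} and Lemma~\ref{chapGP:projexpr}. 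Together with the extra $\eps$ from the time integration this produces $\mathcal{O}(\eps^2)$.

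For the linear correction term \eqref{lin} I would split into its three summands. The main piece $\frac{1}{\eps}P^c_{H_0}(L_{s'}+JH_{E_s^\eps,0})\tilde{\vec\phi}_{s'}$ is controlled using the explicit formula \eqref{chapgp:diffexpre}: the operator acts as multiplication by $(V_{s'}-V_0)$ plus $\mathcal{O}(\psi_{E_{s'},s'}^2)$, both of which are spatially localised and small for small~$s$. Denoting the combined size by $\delta(0,s)$ and using the product estimates of Lemma~\ref{lem_norm} to shift the spatial weight from $\tilde{\vec\phi}_{s'}$ onto these multipliers, one obtains the claimed $\delta(0,s)(\eps\langle s/\eps\rangle^{-3/2}+\eps^2)M^{(\frac{3}{2},\mathrm{l})}_s$ after substituting the definition of $M^{(\frac{3}{2},\mathrm{l})}_s$ into the integral. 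The remaining two summands inherit smallness from $\dot\gamma^\eps_{s'}=\mathcal{O}(\eps^2)$ and from the rapidly decaying range of $P^d_{s'}$ (which allows one to bound $\|P^d_{s'}\tilde{\vec\phi}_{s'}\|_{W^{2,1}}\lesssim \|\tilde{\vec\phi}_{s'}\|_{H^{2,-\sigma}}\lesssim \eps$ by the bootstrap $(\mathrm{B}_\mathrm{l})$); each contributes $\mathcal{O}(\eps^2)$.

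The principal difficulty lies in \eqref{nonlinear}. The nonlinearity is quadratic and cubic in its second argument $\eps L_{s'}^{-1}P_{s'}^c\partial_{s'}\vec\psi_{E^\eps_{s'},s'}+\tilde{\vec\phi}_{s'}$, with coefficient $\psi_{E^\eps_{s'},s'}=\mathcal{O}(\sqrt\eta)$. A naive estimate using only the global bootstrap $(\mathrm{B}_\mathrm{g})$, $\|\tilde{\vec\phi}\|_{H^2}\lesssim\eps^{2/3}$, yields only $\eps^{4/3}$ for the dominant $\psi\tilde{\vec\phi}^2$ contribution, which is insufficient. The fix is to exploit the rapid spatial decay of $\psi_{E^\eps_{s'},s'}$ (Proposition~\ref{groundstatefamily}) and redistribute weights via \eqref{prodest1}--\eqref{prodest5} so that \emph{two} factors of $\tilde{\vec\phi}$ are measured in $H^{2,-\sigma}$; the local bootstrap $(\mathrm{B}_\mathrm{l})$ then gives $\|N\|_{H^2\cap W^{2,1}}\lesssim\eps^2$. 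Dividing by $\eps$ and integrating against $\langle (s-s')/\eps\rangle^{-3/2}$ finally produces $\eps^2$, as required.
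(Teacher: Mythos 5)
Your plan matches the paper's argument almost line by line: dispersive estimate \eqref{dispest} applied to each of \eqref{initial}--\eqref{nonlinear}, the locality/smallness of $L_{s}+JH_{E_s^\eps,0}$ from \eqref{chapgp:diffexpre} combined with Lemma~\ref{est:locnorm} and the convolution integral of Lemma~\ref{integral} for \eqref{lin}, the $\dot\gamma^\eps=\mathcal{O}(\eps^2)$ bound and the explicit $\eps$ for \eqref{inhomo}, and for the quadratic part $\psi_{E^\eps,s}\phi^2$ of \eqref{nonlinear} precisely the weight-redistribution onto two $H^{2,-\sigma}$ factors you describe. One small omission: the cubic term $|\phi|^2\phi$ in $N$ carries no factor of $\psi_{E^\eps,s}$, so it cannot be treated by putting two factors in $H^{2,-\sigma}$; the paper instead bounds it directly by $\|\vec\phi_{s'}\|_{H^2}^3\lesssim(\eps^{2/3})^3=\eps^2$ using the \emph{global} bootstrap $(\mathrm{B}_{\mathrm{g}})$, which is why the exponent $\tfrac23$ in $(\mathrm{B}_{\mathrm{g}})$ is exactly what is needed.
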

Here the constants $M^{(\frac{3}{2},\mathrm{l})}_{s}$ and $\delta(u,\tau)$ are defined as
\begin{align} 
M^{(\frac{3}{2},\mathrm{l})}_{s}:= &\sup_{0\leq s' \leq s} (\eps \langle
\frac{s'}{ \eps }\rangle^{-\frac{3}{2}}+\eps^{2} )^{-1} \| \dists \Hls,\label{eq:M32}\\
\delta(u,\tau):=&\sup_{u\leq s\leq \tau}[\|\V-V_{u}+3b\gs^2 \|_{H^{2,\sigma}}].\label{est:locality}
\end{align}

One last minor difficulty remains before proving Proposition \ref{gpshortestlem}. Recall that we need to prove that $\|\distt\|_{H^{2,-\sigma}}\leq \cdots$ using \eqref{initial}, while what appears on the left hand side of \eqref{initial} is $P^c_{H_0}\distt$, instead of the desired $\distt$. In the next lemma we show their $H^2$ and $H^{2,-\sigma}$-norms are equivalent. 
\begin{lem} \label{normequiv} Suppose $s_{0}$, $\eta$ and $\epsilon$ are sufficiently small. The for $0\leq s\leq s_{0},$ we have
\begin{align*}
\|P^c_{H_0}\distt \H&\simeq \| \distt \|_{H^2}\, , \\
\|P^c_{H_0}\distt \Hls &\simeq \|\distt \Hls \, .
\end{align*}
\end{lem}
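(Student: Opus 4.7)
\medskip

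\noindent\textbf{Proof plan for Lemma \ref{normequiv}.} The plan is to reduce both equivalences to a single small--operator--norm estimate for the difference of two finite--rank projections. The easy direction, $\|P^c_{H_0}\tilde{\vec\phi}_s\|_X \lesssim \|\tilde{\vec\phi}_s\|_X$ for $X \in \{H^2,\,H^{2,-\sigma}\}$, is immediate: $P^d_{H_0}$ is a rank--two projection whose range is spanned by $(v_{*,0},0)^{\!\top}$ and $(0,v_{*,0})^{\!\top}$, and $v_{*,0}$ decays exponentially, so $P^d_{H_0}$ is bounded on both $H^2$ and $H^{2,-\sigma}$. The nontrivial assertion is the reverse inequality, which amounts to controlling $P^d_{H_0}\tilde{\vec\phi}_s$ by a small multiple of $\tilde{\vec\phi}_s$.

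The key observation is the defining condition $P^d_{E_s^\eps,s}\tilde{\vec\phi}_s=0$, which gives
\[
   P^d_{H_0}\tilde{\vec\phi}_s \;=\; \bigl(P^d_{H_0}-P^d_{E_s^\eps,s}\bigr)\tilde{\vec\phi}_s.
\]
So it suffices to prove $\|P^d_{H_0}-P^d_{E_s^\eps,s}\|_{X\to X}\lesssim s_0+\sqrt{\eta}+\eps$; then the triangle inequality $\|\tilde{\vec\phi}_s\|_X\le \|P^c_{H_0}\tilde{\vec\phi}_s\|_X + C(s_0+\sqrt\eta+\eps)\|\tilde{\vec\phi}_s\|_X$ closes by absorption of the last term under our smallness hypotheses.

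To estimate the difference of projections I would proceed in two steps. Using the explicit formula \eqref{eq:proj} together with the bifurcation expansion of Proposition \ref{groundstatefamily}(i), $\psi_{E,s}=a\,v_{*,s}+\mathcal{O}_{H^{2,\sigma}}(E_{*,s}-E)$ with $a=\sqrt{(E_{*,s}-E)/b}/\sqrt{\langle v_{*,s}^2,v_{*,s}^2\rangle}$, one differentiates in $E$ to see that the $(E_{*,s}-E)^{-1/2}$ singularity in $\partial_E a$ is cancelled exactly by the same factor in $\partial_E\|\psi_{E,s}\|_2^2=2a\partial_E a+\mathcal{O}(E_{*,s}-E)$; the resulting leading order of $P^d_{E_s^\eps,s}$ is precisely the block--diagonal projection $P^d_{H_s}:=\mathrm{diag}(|v_{*,s}\rangle\langle v_{*,s}|,\,|v_{*,s}\rangle\langle v_{*,s}|)$, with remainder that is rank--two with kernel of size $\mathcal{O}_{H^{2,\sigma}\otimes H^{2,\sigma}}(\sqrt{\eta})$. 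In a second step, the $C^2$--dependence of $V_s$ from $(\mathrm{H}_{\mathrm{d}})$ combined with standard analytic perturbation theory for the isolated eigenvalue $-E_{*,s}$ (gap $G_0$ by $(\mathrm{H}_{\mathrm{e}})$) gives $\|v_{*,s}-v_{*,0}\|_{H^{2,\sigma}}\lesssim s \le s_0$, so $\|P^d_{H_s}-P^d_{H_0}\|_{X\to X}\lesssim s_0$. Also, by Lemma \ref{energylincontrol}, $|E_s^\eps - E_s|\lesssim\eps$ contributes an additional $\mathcal{O}(\eps)$. Combining the three pieces produces the claimed bound.

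The bounded--operator norms in both $H^2$ and $H^{2,-\sigma}$ follow for free from weight--shifting in the inner products: for any of the Schwartz--type kernel vectors $g$ appearing in $P^d_{E_s^\eps,s}$ or $P^d_{H_0}$, $|\langle g,\chi\rangle|\le \|\langle x\rangle^\sigma g\|_{L^2}\,\|\chi\|_{H^{0,-\sigma}}$, and the corresponding range vector $f$ lies in $H^{2,\sigma}\subset H^{2,-\sigma}$, giving uniform control of $|f\rangle\langle g|$ on $H^{2,-\sigma}$; the $H^2$ case is even simpler. The main technical obstacle is bookkeeping the $(E_{*,s}-E_s^\eps)^{-1/2}$ behaviour of $\partial_E\psi_{E,s}$, but because Proposition \ref{groundstatefamily}(i) asserts analyticity of $\psi_{E,s}$ in $\sqrt{(E_{*,s}-E)/b}$, the projection $P^d_{E_s^\eps,s}$ is smooth in this variable and therefore depends continuously on $\eta$ down to $\eta=0$, where it reduces to $P^d_{H_s}$; this is what makes the cancellation rigorous.
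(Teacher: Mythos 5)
Your proof is correct and arrives at the same essential reduction as the paper, but via a genuinely different route for the key smallness estimate. Both you and the authors exploit $P^d_{E_s^\eps,s}\distt=0$ to write $\distt-P^c_{H_0}\distt=(P^d_{H_0}-P^d_{E_s^\eps,s})\distt$ (equivalently $(P^c_{E_s^\eps,s}-P^c_{H_0})\distt$, since the two operator differences coincide), and then absorb. Where you diverge is in how you bound this operator difference. The paper uses the second resolvent identity applied to the Riesz contour integrals over the circle $\Gamma$ of radius $G_0/2$, reducing the bound to $\|JL_s-H_0\|\lesssim\delta$ in the weighted operator norm, so that no explicit formula for the projector is invoked and the singular $E$-dependence of $\partial_E\psi_{E,s}$ never surfaces. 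You instead expand the explicit kernel formula \eqref{eq:proj} using the bifurcation asymptotics of Proposition~\ref{groundstatefamily}(i), carefully exhibiting the cancellation of the $(E_{*,s}-E)^{-1/2}$ singularity between $\partial_E\psi_{E,s}$ and $\partial_E\|\psi_{E,s}\|_2^2$, so that the leading term is the block-diagonal linear projector $\mathrm{diag}(|v_{*,s}\rangle\langle v_{*,s}|,|v_{*,s}\rangle\langle v_{*,s}|)$ with an $\mathcal{O}(\sqrt\eta)$ remainder, and then perturbing in $s$ and $E$. The paper's resolvent argument is shorter and sidesteps the delicate singular bookkeeping (it only needs smallness of the difference of the two matrix potentials, cf.\ \eqref{chapgp:diffexpre} and \eqref{est:locality}); your argument is more explicit about where the smallness comes from and confirms the structure of $P^d_{E,s}$ in the small-mass limit. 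One small caveat: when tracking the $E_s^\eps$ versus $E_s$ contribution, the derivative $\partial_E P^d_{E,s}$ scales like $(E_{*,s}-E)^{-1/2}\sim\eta^{-1/2}$ rather than $\mathcal{O}(1)$, so the contribution is $\mathcal{O}(\eps/\sqrt\eta)$ rather than $\mathcal{O}(\eps)$ as you wrote; this is still negligible in the relevant regime $\eps\ll\eta$, but worth stating precisely.
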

The lemma will be proved in Section \ref{sec:C}.

Given Lemmata \ref{lemtriplestar} and \ref{normequiv}, we are ready to prove Proposition~\ref{gpshortestlem}.

\begin{proof}[Proof of Proposition~\ref{gpshortestlem}] We discuss first the case where $L_0^{-1}P_0^c \partial_s \vec{\psi}_{E_0,s}|_{s=0}\neq0$. 

Results in Lemmata \ref{lemtriplestar} and \ref{normequiv} imply that, for all sufficiently small $\eps$
\begin{align} \label{locest}
\| {\vec{\tilde{\phi}}}_{s} \Hls \leq C_{s_{0},A} \left(\eps \jte^{-\frac{3}{2}} +  \eps^2+ \delta(0,s)\ (\eps \jte^{-\frac{3}{2}}+\eps^{2})\Mlt \right)\, .
\end{align}
Here we have made the multiplicative constant $C_{s_{0},A}$ explicit in order to define our prescription for $\delta(0,s)$: We choose $s_0$ small enough so that Lemma \ref{normequiv} holds for $s\leq s_0$ and 
\begin{align}\label{deltachoice}
C_{s_{0},A}\delta(0,s_0) \leq 1/2\, .
\end{align}
\comment{

. It follows that
\begin{align*}
\left( \eps \jte^{-\frac{3}{2}} +\eps^{2}    \right)^{-1}\|\dist\Hls \leq C\left(1+\delta \Mltn \right)
\end{align*}
which, after taking the supremum and choosing $\delta\leq\frac{1}{2C}$ (and $s_{0}$ accordingly) yields }
Consequently, by the definition of $M^{(\frac{3}{2},\mathrm{l})}_{s}$ in \eqref{eq:M32}
\begin{align} \label{chapgp:boundM}
\Mltn \leq 2C_{s_{0},A}\, 
\end{align}
and therefore
\begin{align} \label{ineq_locest}
\| {\vec{\tilde{\phi}}}_s \Hls \leq 2C_{s_{0},A}\left( \eps \jte^{-\frac{3}{2}} + \eps^2\right)\, .
\end{align}
This, together with applying the results in Lemma \ref{lemtriplestar} to yield
\begin{align*}
\|{\vec{\tilde{\phi}}}_s\Hls \leq A\eps \| L_0^{-1}P_0^c \partial_s \vec{\psi}_{E_0,s} |_{s=0} \|_{{H^2} \cap W^{2,1}}+\tilde{C}_{s_{0},A}\left(\delta(0,s)\ \eps + \eps^2 \right)\, .
\end{align*}
In addition to the condition on $\delta(0,s)$ in \eqref{deltachoice} we require that $\delta$ also satisfies 
\begin{align}
\delta(0,s_0) \leq \frac{1}{2\tilde{C}_{s_{0},A}} .\label{chooseDeltaA}
\end{align} Hence the bootstrap assumption~\Bsl \ , for $s\in[0,s_{0}]$, is improved to the desired estimate\eqref{bootstraploc},
\begin{align} 
M_{s}^{(\mathrm{l})} \leq  A\eps  \| L_0^{-1}P_0^c \partial_s \vec{\psi}_{E_0,s} |_{s=0} \|_{{H^2} \cap W^{2,1}}+\frac{2}{3}\epsilon\, \nonumber
\end{align}
for all sufficiently small $\eps$.

Now we turn to estimating $\|{{\vec{\tilde{\phi}}}}_s\H$. By $(\mathrm{B}_{\mathrm{l}},$ $\mathrm{B}_{\mathrm{g}})$ and Lemma~\ref{energylincontrol}, for $s'\leq s_{0}$ it holds
\begin{align*}
\| N(\gsvs,{\vec{{\phi}}}_{s'})\H &\lesssim \eps^{2} \, .
\end{align*}
This together with \eqref{initial}-\eqref{nonlinear} and \eqref{ineq_locest} yields the desired estimate \eqref{bootstrapglob}
\begin{align}
\|{{\vec{\tilde{\phi}}}}_s\H \lesssim & \|{{\vec{\tilde{\phi}}}}_{0}\H + \int_0^sds' \ \Big(   \frac{1}{\epsilon}\|{\vec{\tilde{\phi}}}
_{s'}\Hls+\eps^{2}\|L_{s'}^{-1}P_{s'}^c\partial_{s'}\gsvs\H+\epsilon \|\frac{d}{ds'}(L_{s'}^{-1}{P_{s'}^c\partial_{s'} \gsvs}) \H \nonumber
\\&\phantom{aaaaaaaaaaaaaaaaaaaaaaaaaaaaaaaa}+\frac{1}{\epsilon}\|N(\gsvs,\varepsilon
L_{s'}^{-1}P_{s'}^c\partial_{s'}\gsvs+{\vec{\tilde{\phi}}}_{s'})\H \Big)\nonumber \\
&\lesssim \eps + \int_{0}^{s}ds' \ \left( \frac{1}{\eps} (\eps \jse^{-\frac{3}{2}} + \eps^{2}) + \eps  \right) \nonumber \\
& \lesssim \eps\, . \nonumber
\end{align}
 Note that the implicit multiplicative constant can be chosen to be uniform in $s\in [0,s_{0}]$.

 The case $L_0^{-1}P_0^c \partial_s \vec{\psi}_{E_0,s}|_{s=0}=0$ is easier: Estimate~(\ref{locest}) is then modified to
\begin{align} \label{locestprime}
\| {\vec{\tilde{\phi}}}_{s} \Hls \leq C_{s_{0},A} \left( \delta(0,s)\ \Ml +  \eps^2\right)\, .
\end{align}
 With (\ref{deltachoice}) one obtains
 \begin{align}
 M_{s_{0}}^{(\mathrm{l})}\lesssim \eps^{2},
 \end{align}
which is (\ref{locestrepl}). Estimating (\ref{initial}-\ref{nonlinear}) in $\|\cdot\H$ similarly as above yields
 \begin{align*}
\M \lesssim \eps \, ,
 \end{align*}
 with an implicit multiplicative constant being uniform in $s\in [0,s_{0}]$. This proves Proposition~\ref{gpshortestlem}.
\end{proof}

\subsection{Proof of Lemma~\ref{lemtriplestar}}
Next, we estimate \eqref{initial}-\eqref{nonlinear} in the space $H^{2,-\sigma}(\R^{3})$ term by term.

\paragraph{Local estimate for (\ref{initial}):}
If $L_0^{-1}P_0^c \partial_s \vec{\psi}_{E_0,s}|_{s=0}=0$ there is nothing to do. Otherwise, use the estimate 
\begin{align*}
\| \japx^{-\sigma}\japx^{\sigma}L^{-1}_{s}P_{s}^{c}\partial_{s}\gsv\|_{W^{2,1}}\lesssim \|\japx^{-\sigma}\H \|L_{s}^{-1}P_{s}^{c}\partial_{s}\gsv\|_{H^{2,\sigma}}\, ,
\end{align*}
and Theorem \eqref{cor1} to obtain
\begin{align*}
\|U_0(s,0)P^c_{H_0}{{\vec{\tilde{\phi}}}}_{0}\Hls \leq A  \jte^{-\frac{3}{2}} \eps  \|   L_{0}^{-1}P_{0}^{c}\partial_{s}\gsv|_{s=0}\|_{H^{2}\cap W^{2,1}}   \simeq \jte^{-\frac{3}{2}}    \eps  \, .
\end{align*}

\paragraph{Local estimate for \eqref{lin}:} To apply Theorem \ref{cor1} we need bounds for the $\| \cdot \|_{H^{2}\cap W^{2,1}}$-norms of each term:
\begin{lem} \label{est:locnorm} 
\begin{align} \label{firstestlin}
\|P_{H_{0}}^{c}(L_{s}+JH_{E_{s'}^{\epsilon},0})\dist\|_{H^{2}\cap W^{2,1}}& \lesssim \delta(0,s)\ \|\dist \Hls\, , \\ \label{secondlin}
\|P_{H_{0}}^{c}P_{s}^{d}J\dist \|_{H^{2}\cap W^{2,1}} &\lesssim \|\dist \Hls \, ,\\ \label{thirdlin}
\|P_{H_{0}}^{c}P_{s}^{d}\dot{P}_{s}^{d}\dist\|_{H^{2}\cap W^{2,1}}  & \lesssim \|\dist \Hls \, .
\end{align}
\end{lem}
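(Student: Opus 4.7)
The plan hinges on two structural observations that reduce all three estimates to routine weighted-Sobolev book-keeping. First, by \eqref{chapgp:diffexpre} the operator $L_s + JH_{E_{s'}^\eps,0}$ is, up to a scalar multiple of $J$ coming from $E_s^\eps - E_{s'}^\eps$, the operator of multiplication by a matrix-valued potential $W_s$ with entries $V_s - V_0 + cb\psi_{E_s,s}^2$, $c\in\{1,3\}$; hypothesis $(\mathrm{H}_{\mathrm{d}})$, Proposition~\ref{groundstatefamily} and the definition \eqref{est:locality} give $\|W_s\|_{H^{2,\sigma}}\lesssim \delta(0,s)$. Second, by \eqref{eq:proj} the projection $P_s^d$ is a rank-two operator
\[
P_s^d=\sum_{j=1,2} c_j(s)\,\ket{\xi_j(s)}\bra{\chi_j(s)},
\]
whose kernels $\xi_j(s),\chi_j(s)$ are built from $\psi_{E_s^\eps,s}$ and $\partial_{E}\psi_{E_s^\eps,s}$ and therefore lie in $H^{2,l}$ for every $l$, with norms uniform in $s\in[0,1]$.

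With this in hand, \eqref{firstestlin} follows from the weighted product estimate
\[
\|W\phi\|_{H^{2}\cap W^{2,1}} \lesssim \|W\|_{H^{2,\sigma}}\|\phi\|_{H^{2,-\sigma}},
\]
which I would establish by Leibniz on $\partial^{\alpha}(W\phi)$ for $|\alpha|\le 2$, combined with the $\mathbb{R}^{3}$ Sobolev embeddings $H^{2}\hookrightarrow L^{\infty}$ and $H^{1}\hookrightarrow L^{3}\cap L^{6}$ (after extracting the weights $\langle x\rangle^{\pm\sigma}$) for the $H^{2}$ side, and by Cauchy--Schwarz with the weights split as $\langle x\rangle^{\sigma}$ on $W$ and $\langle x\rangle^{-\sigma}$ on $\phi$ for the $W^{2,1}$ side. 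The latter closes precisely because $\sigma>2$ forces $\langle x\rangle^{-\sigma}\in L^{2}(\mathbb{R}^{3})$, the one honest analytic input. The outer projection $P_{H_{0}}^{c}=I-\ket{v_{*,0}}\bra{v_{*,0}}$ is bounded on both $H^{2}$ and $W^{2,1}$ since $v_{*,0}\in H^{2,l}$ for every $l$, so composing with it costs only a multiplicative constant.

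For \eqref{secondlin} and \eqref{thirdlin} I would expand the finite-rank operator as
\[
P_s^d J \vec{\phi}_s=\sum_{j=1,2} c_j(s)\,\langle J^{*}\xi_j(s),\vec{\phi}_s\rangle\,\chi_j(s),
\]
estimate the inner product by $|\langle J^{*}\xi_j(s),\vec{\phi}_s\rangle|\le \|\xi_j(s)\|_{H^{0,\sigma}}\|\vec{\phi}_s\|_{H^{0,-\sigma}}\lesssim \|\vec{\phi}_s\|_{H^{2,-\sigma}}$ via Cauchy--Schwarz with weight $\langle x\rangle^{\sigma}$, and note that $\|\chi_j(s)\|_{H^{2}\cap W^{2,1}}\lesssim 1$ uniformly in $s$; again $P_{H_{0}}^{c}$ is harmless. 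For \eqref{thirdlin} one additionally differentiates \eqref{eq:proj} in $s$: this produces terms proportional to $\dot{E}_{s}^{\eps}$ times $\partial_E^{2}\psi_{E_s^\eps,s}$, $\partial_s\partial_E \psi_{E_s^\eps,s}$, and derivatives of the normalisation scalar $2/\partial_E\|\psi_{E,s}\|_{2}^{2}$, all of which remain in $H^{2,l}$ with bounded coefficients since $\dot{E}_{s}^{\eps}=\mathcal{O}(1)$ by \eqref{eq:EEpsS} and Lemma~\ref{energylincontrol}.

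The only point demanding mild care is the scalar piece $J(E_s^\eps-E_{s'}^\eps)$ hidden inside $L_s+JH_{E_{s'}^\eps,0}$: in the intended application this either vanishes (when $s=s'$) or is absorbed, via Lemma~\ref{energylincontrol} and a factor of $|s-s'|$, into the $\delta(0,s)$ on the right-hand side. Apart from this book-keeping subtlety the lemma presents no essential obstacle; its geometric content is simply the slogan that multiplication by a spatially decaying potential, or a finite-rank operator with localised kernel, maps $H^{2,-\sigma}$ boundedly into $H^{2}\cap W^{2,1}$.
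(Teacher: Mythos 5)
Your proposal is correct and follows essentially the same route as the paper: both exploit that $L_s+JH_{E_{s'}^\eps,0}$ acts by multiplication by a decaying matrix potential controlled by $\delta(0,s)$ in $H^{2,\sigma}$, that $P_s^d$ (and $\dot P_s^d$) are finite-rank with $H^{2,l}$-localized kernels built from $\psi_{E_s^\eps,s}$ and $\partial_E\psi_{E_s^\eps,s}$, and that $P_{H_0}^c$ costs only a bounded constant, so everything reduces to the split $\langle x\rangle^{\sigma}\langle x\rangle^{-\sigma}$ and the product estimates of Lemma~\ref{lem_norm}. The ``scalar piece $J(E_s^\eps-E_{s'}^\eps)$'' you flag is in fact a typographical artifact of the lemma statement; in every place the estimate is invoked the two time arguments coincide, so that term is absent.
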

A proof of this lemma is given in Section \ref{sec:D}. 

In what follows we estimate \eqref{lin}, and start with the case $L_0^{-1}P_0^c \partial_s \vec{\psi}_{E_0,s}|_{s=0}\neq 0$. 

Lemma \ref{est:locnorm}, Theorem \ref{cor1}, and Estimate \eqref{gammaest} yield, for $s \leq s_{0}$ and $s_{0},\eta,\eps \ll 1$,
\begin{align*}
\Big\|\int_0^sds'& \ U_0(s,s') \left(   \frac{1}{\epsilon}P^c_{H_0}(L_{s'}+JH_{E_{s'}^{\epsilon},0}){\vec{\tilde{\phi}}}
_{s'}+\frac{\dot{\gamma}^{\eps}_{s'}}{\eps}P_{H_{0}}^{c}P_{s'}^{d}J\dists-P^c_{H_0}P^{d}_{s}\dot{P}^d_s{\vec{\tilde{\phi}}}_{s'} \right)  \Big\|_{H^{2,-\sigma}}   \\
&\lesssim \int_{0}^{s}ds' \langle \frac{s-s'}{\eps} \rangle^{-\frac{3}{2}}\Big(\frac{1}{\eps} \|P_{H_{0}}^{c}(L_{s'}+JH_{E_{s'}^{\epsilon},0})\dists\|_{H^{2}\cap W^{2,1}}   + \eps \|P_{H_{0}}^{c}P_{s'}^{d}J\dists \|_{H^{2}\cap W^{2,1}}  \\
&\hspace{2in}+\|P_{H_{0}}^{c}P_{s'}^{d}\dot{P}_{s'}^{d}\dists\|_{H^{2}\cap W^{2,1}}   \Big) \\
&\lesssim \int_{0}^{s}ds' \langle \frac{s-s'}{\eps} \rangle^{-\frac{3}{2}}  \left(    ( \eps \langle \frac{s'}{\eps}\rangle^{-\frac{3}{2}} +\eps^{2})(\eps  \langle \frac{s'}{\eps}\rangle^{-\frac{3}{2}} +\eps^{2})^{-1}   \frac{\delta(0,s)}{\eps}\  \|\dists\Hls  + \|\dists\Hls \right) \\
&\lesssim {\delta}(0,s)\ (\eps \jte^{-\frac{3}{2}}+    \eps^{2}  ) M^{(\frac{3}{2},\mathrm{l})}_{s} + \eps^{2}\, ,
\end{align*}
with $$M^{(\frac{3}{2},\mathrm{l})}_{s}= \sup_{0\leq s' \leq s} (\eps \langle
\frac{s'}{ \eps }\rangle^{-\frac{3}{2}}+\eps^{2} )^{-1} \| \dists \Hls,$$ recall the constant $\delta(0,s)$ from \eqref{est:locality}. In the last inequality we applied \Bsl \ as well as the following key observations:
\begin{lem} \label{integral}
\begin{align*}
\int_0^sds' \jtse^{-\frac{3}{2}} &\lesssim \eps\, , \\
\int_0^sds' \jtse^{-\frac{3}{2}} \jse^{-\frac{3}{2}}  &\lesssim \eps \jte^{-\frac{3}{2}}\, .
\end{align*}
\end{lem}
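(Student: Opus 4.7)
\textbf{Proof proposal for Lemma~\ref{integral}.} The two inequalities are time-convolution estimates of a standard dispersive flavour, and both should reduce to elementary integrals after rescaling. My plan is to handle them in the order stated, reusing the first in the proof of the second.

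For the first estimate, I would perform the change of variables $u=(s-s')/\eps$, so that $ds'=-\eps\,du$ and the region of integration becomes $u\in[0,s/\eps]\subset[0,\infty)$. This turns the integral into $\eps\int_{0}^{s/\eps}\langle u\rangle^{-3/2}\,du$, bounded above by $\eps\int_{0}^{\infty}\langle u\rangle^{-3/2}\,du$, which is finite because the exponent $3/2>1$. This yields the claimed bound $\lesssim \eps$ uniformly in $s\in[0,1]$.

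For the second estimate I would split the integration interval as $[0,s/2]\cup[s/2,s]$ and exploit that on each half one of the two Japanese brackets is comparable to $\langle s/\eps\rangle$. Concretely, for $s'\in[0,s/2]$ we have $s-s'\geq s/2$, hence $\langle(s-s')/\eps\rangle\gtrsim\langle s/\eps\rangle$ (up to an absolute multiplicative constant coming from $\sqrt{1+x^2}$ versus $\sqrt{1+x^2/4}$), so that factor may be pulled out and the remaining integral $\int_{0}^{s/2}\langle s'/\eps\rangle^{-3/2}\,ds'$ is controlled by the first estimate. The symmetric argument on $[s/2,s]$ uses $s'\geq s/2$ to pull out $\langle s'/\eps\rangle^{-3/2}\lesssim\langle s/\eps\rangle^{-3/2}$ and then bounds $\int_{s/2}^{s}\langle(s-s')/\eps\rangle^{-3/2}\,ds'$ by the first estimate again. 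Summing the two halves produces the desired bound $\eps\langle s/\eps\rangle^{-3/2}$.

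The only mildly delicate point is the comparison $\langle x/2\rangle \gtrsim \langle x\rangle$ used to transfer the bracket at $s/(2\eps)$ into one at $s/\eps$; I would verify it separately for $|x|\leq 1$ and $|x|\geq 1$. Otherwise the argument is purely computational and I do not expect any genuine obstacle.
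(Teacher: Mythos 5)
Your proposal is correct and follows essentially the same route as the paper: the first bound by the substitution $u=(s-s')/\eps$ and integrability of $\langle u\rangle^{-3/2}$, and the second by splitting at $s/2$, pulling out the bounded bracket on each half, and invoking the first estimate. The paper even writes the intermediate factor as $\langle s/(2\eps)\rangle^{-3/2}$ before absorbing the constant, exactly the comparison you flag as the one mildly delicate step.
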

\begin{proof}
The first estimate follows immediately after a change of variables $s'\rightarrow \frac{s'}{\epsilon}$. For the second we divide the integral region into two parts $[0,s/2]$ and $[s/2,s]$ to obtain
\begin{align*}
\int_0^sds' \jtse^{-\frac{3}{2}} \jse^{-\frac{3}{2}}  &= \int_0^{s/2}ds' \jtse^{-\frac{3}{2}} \jse^{-\frac{3}{2}} + \int_{s/2}^sds' \jtse^{-\frac{3}{2}} \jse^{-\frac{3}{2}} \\
&\lesssim  \langle \frac{s}{2\eps} \rangle^{-\frac {3}{ 2} } \left( \int_0^{s/2}ds'  \jse^{-\frac{3}{2}} +  \int_{s/2}^{s}ds'  \jtse^{-\frac{3}{2}} \right) \\
&\lesssim \eps \jte^{-\frac{3}{2}}\, .
\end{align*}
\end{proof}

Next we estimate \eqref{lin} for the simpler case $L_0^{-1}P_0^c \partial_s \vec{\psi}_{E_0,s}|_{s=0}=0$:
\begin{align*}
\Big\|\int_0^sds'& \ U_0(s,s') \left(   \frac{1}{\epsilon}P^c_{H_0}(L_{s'}+JH_{E_{s'}^{\epsilon},0}){\vec{\tilde{\phi}}}
_{s'}+\frac{\dot{\gamma}^{\eps}_{s'}}{\eps}P_{H_{0}}^{c}P_{s'}^{d}J\dists-P^c_{H_0}P^{d}_{s}\dot{P}^d_s{\vec{\tilde{\phi}}}_{s'} \right)  \Big\|_{H^{2,-\sigma}}   \\
&\lesssim \int_{0}^{s}ds' \langle \frac{s-s'}{\eps} \rangle^{-\frac{3}{2}}\Big(\frac{1}{\eps} \|P_{H_{0}}^{c}(L_{s'}+JH_{E_{s'}^{\epsilon},0})\dists\|_{H^{2}\cap W^{2,1}}   + \eps \|P_{H_{0}}^{c}P_{s'}^{d}J\dists \|_{H^{2}\cap W^{2,1}}  \\
&\hspace{2in}+\|P_{H_{0}}^{c}P_{s'}^{d}\dot{P}_{s'}^{d}\dists\|_{H^{2}\cap W^{2,1}}   \Big) \\
&\lesssim \int_{0}^{s}ds' \langle \frac{s-s'}{\eps} \rangle^{-\frac{3}{2}}  \left(      \frac{\delta(0,s)}{\eps}  \|\dists\Hls  + \|\dists\Hls \right) \\
&\lesssim {\delta(0,s)}\Ml + \eps^{2}\,.
\end{align*}

\paragraph{Local estimate for (\ref{inhomo}):}
\begin{align*}
\Big\|\int_0^sds'& \ U_0(s,s') \left(-\dot{\gamma}^{\eps}_{s'}P_{H_0}^cP_{s'}^cJL_{s'}^{-1}P_{s'}^c\partial_{s'}\gsvs-\epsilon P^c_{H_0}P_{s'}^c\frac{d}{ds'}(L_{s'}^{-1}{P_{s'}^c\partial_{s'} \gsvs}) \right) \Big\|  \\
&\lesssim \int_{0}^{s}ds' \ \jtse^{-\frac{3}{2}} \eps \left(\Big\|  L_{s'}^{-1}P_{s'}^c\partial_{s'}\gsvs\Big\|_{H^{2}\cap W^{2,1}}+\Big\| \frac{d}{ds'}(L_{s'}^{-1}{P_{s'}^c\partial_{s'} \gsvs} ) \Big\|_{H^{2}\cap W^{2,1}}\right) \\
&\lesssim \eps^2 .
\end{align*}
The first inequality results from Estimate~(\ref{gammaest}) and the fact that $\|P_{H_{0}}^{c}P_{s}^{c}\|_{H^{2}\cap W^{2,1}\rightarrow H^{2}\cap W^{2,1}}$ is uniformly bounded. The second inequality follows from Lemma \ref{integral} and \Bsl \ as well as $\|\cdot\|_{H^{2}\cap W^{2,1}} \lesssim \|\cdot \|_{H^{2,\sigma}}$. 

\paragraph{Local estimate for (\ref{nonlinear}):}
Instead of expanding $N(\gsvs,\varepsilon
L_{s'}^{-1}P_{s'}^c\partial_{s'}\gsvs+{\vec{\tilde{\phi}}}_{s'})$ it is more convenient to consider $N(\gsvs, \vec{\phi}_{s'})$, recall that in \eqref{finerdecomp}, $$\vec{\phi}_{s}=\epsilon L_{s}^{-1}P_{s}^c\partial_{s}\gsv+{\vec{\tilde{\phi}}}_{s}.$$ By Equation \eqref{eq_disp1} we may conclude that 
\begin{itemize}
\item[-] terms which are quadratic in $\phi_{s'}$ come with a factor of $\psi_{E^{\eps}_{s'},s'}$ which decays rapidly at spatial infinity. By $(\mathrm{B}_{\mathrm{l}})$
\begin{align*}
      \|\psi_{E_{s'}^{\eps},s'} \phi_{s'}^{2}\|_{H^{2}\cap W^{2,1}}   =   \|\japx^{2\sigma}\psi_{E_{s'}^{\eps},s'} \japx^{-2\sigma}\phi_{s'}^{2}\|_{H^{2}\cap W^{2,1}} \lesssim \| \dists \Hls^{2} \lesssim \eps^{2} \, ,
\end{align*}
\item[-] terms which are cubic in $\phi_{s'}$ are estimated by
\begin{align*}
\||\phi_{s'}|^{2}\phi_{s'} \|_{H^{2}\cap W^{2,1}} \lesssim \|\dists\H^{3}\lesssim  \eps^{2}\, .
\end{align*}
Here the bootstrap assumption~$(\mathrm{B}_{\mathrm{g}})$ for the global norm $\|\distt\H$ has been used.
\end{itemize}
Hence collect the estimates above to obtain
\begin{align*}
{\big{\|}}\int_0^sds' \ U_0(s,s') \frac{1}{\epsilon}P^c_{H_0}P_{s'}^cN(\gsvs,{\vec{{\phi}}}_{s'})    \Hls 
& \lesssim \int_{0}^{s}ds' \ \jtse^{-\frac{3}{2}} \frac{1}{\eps}\cdot \eps^{2} \lesssim \eps^{2}\, .
\end{align*}
This finishes the proof of Lemma~\ref{lemtriplestar}.\hfill$\square$

\section{Proof of Theorem~\ref{refor} for all $s\in [0,1]$}\label{sec:full}
So far we have established Theorem~\ref{refor} on the small interval $[0,s_{0}]$ only. Recall that $s_{0}$ does not depend on $\eps$ if $\eps \ll 1$. Next we extend the results to the interval $[0,1].$

\begin{prop} \label{gplemall} For $\eta \ll 1$ there exists a small time $\tau^{*}>0$ and constant $C_{\tau^{*}}$ with the following property: Whenever
{\begin{align*}
M_{{s}^{*}}^{\mathrm{(l)}} &\leq C_{{s}^{*}}\eps\left(\jte^{-\frac{3}{2}}+\eps\right)\\
M_{{s}^{*}}^{(\mathrm{g})}&\leq C_{{s}^{*}}\eps
\end{align*}}
hold for some ${s}^{*}\in [s_{0},1]$, then
{\begin{align}
M_{{s}^{*}+\tau^{*}}^{\mathrm{(l)}} &\leq C_{\tau^{*}}C_{{s}^{*}}\eps\left(\jte^{-\frac{3}{2}}+\eps\right)\label{finalimprovement1} \\
M_{{s}^{*}+\tau^{*}}^{(\mathrm{g})}&\leq C_{\tau^{*}}C_{{s}^{*}}\eps \label{finalimprovement2}
\end{align}}
for all $\eps \ll 1$.
\end{prop}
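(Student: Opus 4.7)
The plan is to iterate the short-time argument of Proposition~\ref{gpshortestlem} by re-centering the reference Hamiltonian at $s^*$ rather than at $0$. Three structural facts make this natural: (i) the dispersive estimates of Theorem~\ref{cor1} have constants uniform in the base point $\tau \in [0,1]$; (ii) the spectral gap $G_{0}$ in $(\mathrm{H}_{\mathrm{e}})$ is uniform in $s$, so the analog of Lemma~\ref{normequiv} with $P_{H_{s^*}}^c$ in place of $P_{H_{0}}^c$ holds uniformly in $s^*$; and (iii) by Proposition~\ref{groundstatefamily} the quantity $\delta(s^*, s)$ tends to zero as $s \searrow s^*$ at a rate independent of $s^*$.

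First, I would replace $H_{E_{s}^{\eps},0}$ in (\ref{chapgp:refham}) by $H_{E_{s}^{\eps}, s^*} := -\Delta + V_{s^*} - E_{s}^{\eps}$ and rewrite (\ref{phitildeq11}) with this new reference operator. The additional perturbation $L_{s} + JH_{E_{s}^{\eps}, s^*}$ is controlled as in (\ref{chapgp:diffexpre}) by $\delta(s^*, s)$ rather than $\delta(0, s)$. Applying Duhamel's principle on $[s^*, s^* + \tau^*]$ and projecting with $P_{H_{s^*}}^{c}$ produces an integral equation structurally identical to (\ref{initial})--(\ref{nonlinear}), in which the propagator $U_{s^*}(s, s')$ is generated by $-J(H_{E_{s}^{\eps},s^*} + \dot\gamma_{s}^{\eps})$ and the hypothesized bounds on $\vec{\tilde{\phi}}_{s^*}$ play the role of the new ``initial data''.

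Second, I would estimate each resulting term exactly as in Lemma~\ref{lemtriplestar}, with $\delta(s^*, s^* + \tau^*)$ in place of $\delta(0, s)$ and time integration starting at $s^*$. The shifted analog of Lemma~\ref{integral},
\begin{align*}
\int_{s^*}^{s} \langle (s-s')/\eps\rangle^{-3/2} \, ds' \lesssim \eps, \qquad \int_{s^*}^{s} \langle (s-s')/\eps\rangle^{-3/2} \langle (s'-s^*)/\eps\rangle^{-3/2} \, ds' \lesssim \eps \langle (s-s^*)/\eps\rangle^{-3/2},
\end{align*}
follows by the same change of variables. The propagated initial-data term $U_{s^*}(s, s^*) P_{H_{s^*}}^{c} \vec{\tilde{\phi}}_{s^*}$ is treated with (\ref{dispest}); the required $H^{2}\cap W^{2,1}$ bound on the right-hand side is obtained by combining the hypothesized $H^{2,-\sigma}$ bound of size $C_{s^*}\eps(\jte^{-3/2}+\eps)$ (which controls the localized, weighted-$L^{1}$ portion) with the unitary $H^{2}$ bound of size $C_{s^*}\eps$. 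The linear, modulation, inhomogeneous, and nonlinear source terms are controlled in $H^{2,-\sigma}$ and $H^{2}$ exactly as in Section~\ref{chap4:main} via the bootstrap ansätze $M^{(\mathrm{l})}_{s} \leq 2C_{s^*}\eps(\jte^{-3/2}+\eps)$ and $M^{(\mathrm{g})}_{s} \leq 2 C_{s^*}\eps$ for $s \in [s^*, s^* + \tau^*]$.

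Finally, I would choose $\tau^{*}>0$ so small, \emph{independently} of $s^{*}\in [s_{0},1]$, that (a) the analog of Lemma~\ref{normequiv} holds on $[s^{*},s^{*}+\tau^{*}]$ and (b) the multiplicative constant of the linear perturbation estimate times $\delta(s^{*},s^{*}+\tau^{*})$ is at most $1/2$; both can be achieved uniformly in $s^{*}$ by the regularity statements of Proposition~\ref{groundstatefamily} and the uniformity of Theorem~\ref{cor1} in $\tau$. The bootstrap then closes as in Proposition~\ref{gpshortestlem}, producing (\ref{finalimprovement1}) and (\ref{finalimprovement2}) with a constant $C_{\tau^{*}}$ that depends only on $\tau^{*}$. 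Iterating at most $\lceil (1-s_{0})/\tau^{*}\rceil$ times then extends Theorem~\ref{refor} to all of $[0,1]$. The main obstacle I anticipate is showing that $U_{s^*}(s, s^*) \vec{\tilde{\phi}}_{s^*}$ inherits the decay structure $\eps(\langle(s-s^*)/\eps\rangle^{-3/2}+\eps)$ needed to close the induction: unlike the original initial datum $\vec{\tilde{\phi}}_{0}$, the function $\vec{\tilde{\phi}}_{s^{*}}$ is not a priori $W^{2,1}$, and one must combine the weighted $H^{2,-\sigma}$ hypothesis (small) with the uniform $H^{2}$ bound (merely $O(\eps)$) to reproduce the required dispersive decay.
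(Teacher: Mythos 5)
Your framework matches the paper's up to the decisive technical point, which you correctly flag at the end as ``the main obstacle I anticipate'' but do not resolve. The gap is real: the dispersive estimate~(\ref{dispest}) requires the right-hand side to be bounded in $H^{2}\cap W^{2,1}$, and neither the hypothesized $H^{2,-\sigma}$ bound (negative weight, hence no control near infinity) nor the $H^{2}$ bound (wrong H\"older direction for $L^{1}$) implies a $W^{2,1}$ bound on $\vec{\tilde{\phi}}_{s^{*}}$. There is no interpolation or combination of these two norms that produces $W^{2,1}$; positive weights $H^{2,l}$ with $l>3/2$ control $W^{2,1}$ by Cauchy--Schwarz, but $H^{2,-\sigma}$ points the other way. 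So the term $U_{s^{*}}(s,s^{*})P_{H_{s^{*}}}^{c}\vec{\tilde{\phi}}_{s^{*}}$ cannot be ``treated with (\ref{dispest})'' as you propose.

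The paper avoids this entirely by never treating $\vec{\tilde{\phi}}_{s^{*}}$ as an initial datum. Instead it substitutes the Duhamel representation of $\vec{\tilde{\phi}}_{s^{*}}$ on $[0,s^{*}]$ (Equation~(\ref{eq:exprTildPhi}), with the \emph{new} reference operator $H_{E_{s}^{\eps},s^{*}}$) into the Duhamel formula on $[s^{*},s]$ and uses the semigroup identity $U_{s^{*}}(s,s^{*})U_{s^{*}}(s^{*},0)=U_{s^{*}}(s,0)$ to merge the two into a single representation~(\ref{1initial})--(\ref{1nonlin}) whose ``initial'' term is $U_{s^{*}}(s,0)P_{H_{s^{*}}}^{c}\vec{\tilde{\phi}}_{0}$ and whose source integral runs over all of $[0,s]$. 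The genuine datum $\vec{\tilde{\phi}}_{0}=-\eps L_{0}^{-1}P_{0}^{c}\partial_{s}\vec{\psi}_{E_{s},s}|_{s=0}$ \emph{is} in $H^{2}\cap W^{2,1}$, and every source term is controlled in $H^{2}\cap W^{2,1}$ via Lemma~\ref{est:locnorm} (using the $H^{2,-\sigma}$ bounds on $\vec{\tilde{\phi}}_{s'}$ together with the weight $\langle x\rangle^{\sigma}$ supplied by the potential-difference or by $\psi_{E^{\eps}_{s'},s'}$), so (\ref{dispest}) applies term by term. Your proposal is correct in spirit about re-centering the reference Hamiltonian at $s^{*}$, exploiting the uniformity in $\tau$ of Theorem~\ref{cor1}, and choosing $\tau^{*}$ small so that $C_{1}\delta(s^{*},s^{*}+\tau^{*})\leq 1/2$, but it is missing this unrolling step, without which the bootstrap cannot close.
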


\begin{proof}[Proof of Theorem~\ref{refor}] Clearly the hypothesis of the proposition is satisfied at ${s}^{*}=s_{0}$. Estimate~(\ref{distestn1}) follows by iteration. Estimates~(\ref{distEest}) and (\ref{gammaquadest}) are proven by the same techniques as before. 
\comment{By Proposition~\ref{lem_wellpos} and Estimate~(\ref{ineq_locest}) there is a maximal $s_{0}<\tau \leq 1$ such that $(\mathrm{B}_{\mathrm{l}}$, $\mathrm{B}_{\mathrm{g}})$ resp. $(\mathrm{B}_{\mathrm{l}}'$, $\mathrm{B}_{\mathrm{g}})$ \ are satisfied with $s=\tau$ as long as $\eps \ll 1$. Actually, $\tau=1$ because if this were false we could -- by Proposition~\ref{gplemall} -- extend the validity of $(\mathrm{B}_{\mathrm{l}}$,~$\mathrm{B}_{\mathrm{g}})$ resp. $(\mathrm{B}_{\mathrm{l}}'$, $\mathrm{B}_{\mathrm{g}})$ \ to a slightly bigger interval in contradiction to the maximality. This proves Estimate~(\ref{distestn1}). Estimates (\ref{distEest}, \ref{gammaquadest}) are now seen to hold by the analysis in Subsection~\ref{contrmodpar}.}
\end{proof}

\begin{proof}[Proof of Proposition~\ref{gplemall}] 
The choice of $\tau^*$ will be in \eqref{choiceTau}, after considering all the factors determining it.

We reformulate the equation for $\dot{{\vec{\tilde\phi}}}_s$ similar to that in \eqref{phitildeq11}. The only difference is that we approximate $L_s$ by $-JH_{E_{s}^{\eps},s^*}$ with
\begin{align} \label{chapgp:refham1}
H_{E^{\eps}_{s},s^*}:=-\Delta+
V_{s^*}-E^{\eps}_{s}=H_{s^*}-E^{\eps}_{s}.
\end{align}
Hence the equation for $\dot{{\vec{\tilde\phi}}}_s$ becomes
\begin{align} \label{phitildeq111}
&\dot{{\vec{\tilde\phi}}}_s=-\frac{1}{\varepsilon}J(H_{E_{s}^{\eps},s^*}+\dot{\gamma}^{\eps}_s){\vec{\tilde\phi}}
_s   \\
& \phantom{aaa} +   \frac{1}{\epsilon}(L_s+JH_{E_{s}^{\eps},s^*}){\vec{\tilde\phi}}
_s    + \frac{\dot{\gamma}^{\eps}_s}{\eps}P_s^dJ\distt   -P_{s}^{d}\dot{P}^d_s{\vec{\tilde\phi}}_s \nonumber  \\
& \phantom{aaa}
-\dot{\gamma}^{\eps}_sP_s^cJL_s^{-1}P_s^c\partial_{s}\gsv 
-
\epsilon
P_s^c\frac{d}{ds}(L_s^{-1}{P_s^c\partial_{s}\gsv}) -\frac{1}{\epsilon}P_s^cN(\gsv,\varepsilon
L_s^{-1}P_s^c\partial_{s}\gsv+{\vec{\tilde\phi}}_s) \, . \nonumber 
\end{align}
Now we consider the initial condition, since the equation for $\dot{{\vec{\tilde\phi}}}_s$ in \eqref{phitildeq11} is started from time $s^*$, its initial condition ${\vec{\tilde\phi}}_{s^*}$ takes the form
\begin{align}
{\vec{\tilde\phi}}_{s^*}=&U_{{s^*}}(s^*,0){{\vec{\tilde{\phi}}}}_{0}+
\int_0^{s^*} \ ds' \ U_{{s^*}}  (s^*,s') \times \label{eq:exprTildPhi} \\ 
&\phantom{aaaaa} \times \Big(  
\frac{1}{\epsilon}(L_{s'}+JH_{E_{s'}^{\epsilon},s^*}){\vec{\tilde{\phi}}}
_{s'}+\frac{\dot{\gamma}^{\eps}_{s'}}{\eps}P_{s'}^dJ\dists-P^{d}_{s'}\dot{P}^d_{s'}{\vec{\tilde{\phi}}}_{s'}  \nonumber\\
&\phantom{aaaaa} -\dot{\gamma}^{\eps}_{s'}P_{s'}^cJL_{s'}^{-1}P_{s'}^c\partial_{s'}\gsvs  -\epsilon
P_{s'}^c\frac{d}{ds'}(L_{s'}^{-1}P_{s'}^c\partial_{s'}
\gsvs)\nonumber
\\
& \phantom{aaaaa}-\frac{1}{\epsilon}P_{s'}^cN(\gsvs,\varepsilon
L_{s'}^{-1}P_{s'}^c\partial_{s'}\gsvs+{\vec{\tilde{\phi}}}_{s'})\Big)\, .\nonumber
\end{align}
Then for any time $s\geq s^*$, apply Duhamel's principle on \eqref{phitildeq111} to obtain
\begin{align} 
P_{H_{s^*}}^c{{\vec{\tilde{\phi}}}}_s=&P_{H_{s^*}}^cU_{{s^*}}(s,s^*){{\vec{\tilde{\phi}}}}_{s^*}+\int_{s^*}^s \ ds' \ U_{{s^*}}  (s,s') \times \nonumber \\ 
&\phantom{aaaaa} \times \Big(  
\frac{1}{\epsilon}P_{H_{s^*}}^c(L_{s'}+JH_{E_{s'}^{\epsilon},s^*}){\vec{\tilde{\phi}}}
_{s'}+\frac{\dot{\gamma}^{\eps}_{s'}}{\eps}P_{H_{s^*}}^cP_{s'}^dJ\dists-P_{H_{s^*}}^cP^{d}_{s'}\dot{P}^d_{s'}{\vec{\tilde{\phi}}}_{s'}   \nonumber\\
&\phantom{aaaaa} -\dot{\gamma}^{\eps}_{s'}P_{H_{s^*}}^cP_{s'}^cJL_{s'}^{-1}P_{s'}^c\partial_{s'}\gsvs  -\epsilon
P_{H_{s^*}}^cP_{s'}^c\frac{d}{ds'}(L_{s'}^{-1}P_{s'}^c\partial_{s'}
\gsvs)
\nonumber\\
& \phantom{aaaaa}-\frac{1}{\epsilon}P_{H_{s^*}}^cP_{s'}^cN(\gsvs,\varepsilon
L_{s'}^{-1}P_{s'}^c\partial_{s'}\gsvs+{\vec{\tilde{\phi}}}_{s'})\Big)\, .
\nonumber
\end{align}

Plug the expression for ${{\vec{\tilde{\phi}}}}_{s^*}$ in \eqref{eq:exprTildPhi} and use the semigroup property $$U_{{s^*}}(s,s^*)U_{{s^*}}(s^*,0)=U_{{s^*}}(s,0)$$ to obtain
\begin{align}
P_{H_{s^*}}^c{{\vec{\tilde{\phi}}}}_s=&
U_{{s^*}}(s,0)P_{H_{s^*}}^c{{\vec{\tilde{\phi}}}}_{0}\label{1initial} \\ \label{1homo} \phantom{aa}&+
\left(\int_0^{s^{*}} +\int_{s^*}^s \right)ds' \ U_{{s^*}}  (s,s') \times \nonumber \\ 
&\phantom{aaaaa} \times \Big(  
\frac{1}{\epsilon}P_{H_{s^*}}^c(L_{s'}+JH_{E_{s'}^{\epsilon},s^*}){\vec{\tilde{\phi}}}
_{s'}+\frac{\dot{\gamma}^{\eps}_{s'}}{\eps}P_{H_{s^*}}^cP_{s'}^dJ\dists-P_{H_{s^*}}^cP^{d}_{s'}\dot{P}^d_{s'}{\vec{\tilde{\phi}}}_{s'}  \\ \label{1inhomo}
&\phantom{aaaaa} -\dot{\gamma}^{\eps}_{s'}P_{H_{s^*}}^cP_{s'}^cJL_{s'}^{-1}P_{s'}^c\partial_{s'}\gsvs  -\epsilon
P_{H_{s^*}}^cP_{s'}^c\frac{d}{ds'}(L_{s'}^{-1}P_{s'}^c\partial_{s'}
\gsvs)
\\&\label{1nonlin} \phantom{aaaaa}-\frac{1}{\epsilon}P_{H_{s^*}}^cP_{s'}^cN(\gsvs,\varepsilon
L_{s'}^{-1}P_{s'}^c\partial_{s'}\gsvs+{\vec{\tilde{\phi}}}_{s'})\Big)\, .\end{align}

In what follows we estimate \eqref{1initial}-\eqref{1nonlin}. As usual we assume $(\mathrm{B}_{\mathrm{l}}$, $\mathrm{B}_{\mathrm{g}})$ resp. $(\mathrm{B}_{\mathrm{l}}'$, $\mathrm{B}_{\mathrm{g}})$ hold in a time interval $[s^*,s^*+\tau]$ for some $\tau>0$. The existence of $\tau>0$ is guaranteed by the local wellposedness of the solution. Recall that the $(\mathrm{B}_{\mathrm{l}}$, $\mathrm{B}_{\mathrm{g}})$ resp. $(\mathrm{B}_{\mathrm{l}}'$, $\mathrm{B}_{\mathrm{g}})$ conditions are defined as
\begin{itemize}
\item[$(\mathrm{B}_{\mathrm{l}})$] \hspace{1in}
$M_{{s}}^{(\mathrm{l})} \leq 2A\eps \| L_0^{-1}P_0^c \partial_s \vec{\psi}_{E_{s},s} |_{s=0} \|_{{H^2} \cap W^{2,1}}+\epsilon $  ,
\item[$(\mathrm{B}_{\mathrm{g}})$] \hspace{1in}
$M_{{s}}^{(\mathrm{g})}\leq  \eps^{\frac 2 3}$\, ,
\end{itemize}
where $A>1$ is the constant in the dispersive Estimate \eqref{dispest}. Note that if $L_0^{-1}P_0^c \partial_s \vec{\psi}_{E_0,s}|_{s=0}=0$ then \Bsl $\ $ is replaced by
\begin{itemize}
\item[$(\mathrm{B}_{\mathrm{l}}')$] \hspace{1in}
$M_{{s}}^{(\mathrm{l})} \leq \eps$\, .
\end{itemize}

In estimating \eqref{1initial}-\eqref{1nonlin}, the decay estimates generated by $U_{{s^*}}(s,t)$ play a prominent role, as in the proof of Proposition \ref{gpshortestlem}. Here after analyzing as in \eqref{eq:relateHu}, we find that it suffices to study the operator $e^{-it H_{s^*}}$, which makes the results in Theorem \ref{cor1} applicable.

\paragraph{Local estimate for (\ref{1initial}):} It is easy to obtain
\begin{align*}
\|U_{s^*}(s,0) P^c_{H_{s^*}} {\vec{\tilde{\phi}}}_0 \Hls \leq  A  \jte^{-\frac{3}{2}} \eps  \|   L_{0}^{-1}P_{0}^{c}\partial_{s}\gsv|_{s=0}\|_{H^{2}\cap W^{2,1}}   \lesssim    \eps^{2}  \, .
\end{align*}
Here we used $s\geq s_{0}>0$.
\paragraph{Local estimate for (\ref{1homo}):}
The integrals of the first summand {$\frac{1}{\epsilon}P_{H_{s^{*}}}^{c}(L_{s'}+JH_{E_{s'}^{\epsilon},s^*}){\vec{\tilde{\phi}}}
_{s'}$} dominate the others for $\eps \ll 1$. By \eqref{ineq_locest} and Lemmata \ref{est:locnorm}, \ref{integral} 
\begin{align*}
\Big\| \frac{1}{\epsilon} \int_0^{s^{*}} ds' \ U_{{s^*}}(s,s')P_{H_{s^*}}^c  
(L_{s'}+JH_{E_{s'}^{\epsilon},s^*}){\vec{\tilde{\phi}}}
_{s'} \Big\|_{H^{2,-\sigma}} \leq &\frac{C_{s^{*}}A}{\eps} \int_0^{s^{*}} ds' \ \langle \frac{s-s'}{\eps} \rangle^{-\frac{3}{2}} (\eps \jse^{-\frac{3}{2}}+\eps^2) \\
\leq&C_{s^{*},A}\left( \eps \jte^{-\frac{3}{2}}+\eps^{2}\right)\\
\leq&  C_{s^{*},A}\eps^{2} \, ,
\end{align*}
if $L_0^{-1}P_0^c \partial_s \vec{\psi}_{E_0,s}|_{s=0}\neq0$. The same estimate holds if $L_0^{-1}P_0^c \partial_s \vec{\psi}_{E_0,s}|_{s=0}=0$.

Next, we use (\ref{firstestlin}) in Lemma \ref{est:locnorm} with $0$ replaced by $s_{*}$. There exists a constant $C_{1}=C_{1}(A)>0$, so that for all $\eps \ll 1$
\begin{align*}
\Big\| \frac{1}{\epsilon} \int_{s^*}^s ds' \ U_{{s^*}}(s,s')P_{H_{s^*}}^c  
(L_{s'}+JH_{E_{s'}^{\epsilon},s^*}){\vec{\tilde{\phi}}}
_{s'} \Big\|_{H^{2,-\sigma}} &\leq A \frac{\delta(s^*,s)}{\eps} \int_{s^*}^s ds' \ \langle \frac{s-s'}{\eps} \rangle^{-\frac{3}{2}} \|{\vec{\tilde{\phi}}}_{s'}\Hls \\& \leq C_{1} \delta(s^*,s) \sup_{s^*\leq s' \leq s} \|{\vec{\tilde{\phi}}}_{s'}\Hls \, .
\end{align*}
Recall the definition of $\delta(u,s)\in \mathbb{R}^{+}$ from \eqref{est:locality}.

\paragraph{Local estimate for (\ref{1inhomo}):}The integral of the second summand $\epsilon
P_{H_{s^{*}}}^{c}P_{s'}^c\frac{d}{ds'}(L_{s'}^{-1}P_{s'}^c\gsvs)$ dominates the other and
\begin{align*}
\eps \Big\|\int_0^s ds' \ U_{{s^*}}(s,s')P_{H_{s^*}}^c
P_{s'}^c\frac{d}{ds'}(L_{s'}^{-1}P_{s'}^c\partial_{s'}
\gsvs) \Big\|_{H^{2,-\sigma}} \lesssim C_{s^{*}} \eps \int_0^s ds' \ \langle \frac{s-s'}{\eps} \rangle^{-\frac{3}{2}} \lesssim \eps^2\, .
\end{align*}

\paragraph{Local estimate for (\ref{1nonlin}):} By the same reasoning as in the local estimate for (\ref{nonlinear}) we have
\begin{align*}
\Big\| \int_0^s ds' \ U_{s^*}(s,s')&\left(-\frac{1}{\epsilon}P^{c}_{H_{s^{*}}}P_{s'}^cN(\gsvs,\varepsilon
L_s^{-1}P_s^c\partial_{s}\gsvs+{\vec{\tilde{\phi}}}_s)\right)\Big\|_{H^{2,-\sigma}}  \lesssim C_{s^{*}}\eps^2\, .
\end{align*}

\comment{
=&U_{{\tau^*}}(t,{\tau^*})\Bigg(U_{\tau^*}(\tau^*,0) {\vec{\tilde{\phi}}}_0 
+
\int_{{0}}^{\tau^*}ds \ U_{{\tau^*}}(t,s)\{  
\frac{1}{\epsilon}(L_s+JH_{{\tau^*}}^s){\vec{\tilde{\phi}}}
_s+\frac{\dot{\gamma}_s}{\eps}P_s^dJ\dists-\dot{P}^d_s{\vec{\tilde{\phi}}}_s  \\
&\phantom{aaaaa} -\epsilon
P_s^c\frac{d}{ds}(L_s^{-1}{P_s^c\partial_s
\vec{\psi}_{E_{},s}}) -\dot{\gamma}_sP_s^cJL_s^{-1}P_s^c\frac{d}{ds}\vec{\psi}_{E,s}
\\&\phantom{aaaaa}-\frac{1}{\epsilon}P_s^cN(\gsvs,\varepsilon
L_s^{-1}P_s^c\frac{d}{ds}\vec{\psi}_{E_{},s}+{\vec{\tilde{\phi}}}_s)
\}
\Bigg) \\\phantom{aa}&+
\int_{{\tau^*}}^tds \ U_{{\tau^*}}(t,s)\{  
\frac{1}{\epsilon}(L_s+JH_{{\tau^*}}^s){\vec{\tilde{\phi}}}
_s+\frac{\dot{\gamma}_s}{\eps}P_s^dJ\dists-\dot{P}^d_s{\vec{\tilde{\phi}}}_s  \\
&\phantom{aaaaa} -\epsilon
P_s^c\frac{d}{ds}(L_s^{-1}{P_s^c\partial_s
\vec{\psi}_{E_{},s}}) -\dot{\gamma}_sP_s^cJL_s^{-1}P_s^c\frac{d}{ds}\vec{\psi}_{E,s}
\\&\phantom{aaaaa}-\frac{1}{\epsilon}P_s^cN(\gsvs,\varepsilon
L_s^{-1}P_s^c\frac{d}{ds}\vec{\psi}_{E_{},s}+{\vec{\tilde{\phi}}}_s)
\}  does nothing new \\}

\comment{\begin{align*} 
P^c_{H_{{\tau^*}}}{{\vec{\tilde{\phi}}}}_t=&
U_{{\tau^*}}(t,{\tau^*})P^c_{H_{{\tau^*}}}{{\vec{\tilde{\phi}}}}_{{\tau^*}} \\&+
\int_{{\tau^*}}^tds \ U_{{\tau^*}}(t,s)\{  
\frac{1}{\epsilon}P^c_{H_{{\tau^*}}}(L_s+JH_{{\tau^*}}^s){\vec{\tilde{\phi}}}
_s+\frac{\dot{\gamma}_s}{\eps}P_{H_{\tau^*}}^cP_s^dJ\dists-P^c_{H_{{\tau^*}}}\dot{P}^d_s{\vec{\tilde{\phi}}}_s  \\
&\phantom{aaaaa} -\epsilon
P^c_{H_{{\tau^*}}}P_s^c\frac{d}{ds}(L_s^{-1}{P_s^c\partial_s
\vec{\psi}_{E_{},s}}) -\dot{\gamma}_sP_{H_{\tau^*}}^cP_s^cJL_s^{-1}P_s^c\frac{d}{ds}\vec{\psi}_{E,s}
\\&\phantom{aaaaa}-\frac{1}{\epsilon}P^c_{H_{{\tau^*}}}P_s^cN(\gsvs,\varepsilon
L_s^{-1}P_s^c\frac{d}{ds}\vec{\psi}_{E_{},s}+{\vec{\tilde{\phi}}}_s)
\}.
\end{align*}
}

\comment{
For
$t\geq \delta$ we obtain
\begin{align*}
P^c_{H_\delta}{{\vec{\tilde{\phi}}}}_t=
U_\delta(t,0)P^c_{H_\delta}{{\vec{\tilde{\phi}}}}_{\delta} + &\int_\delta^tds \
U_\delta(t,s)\{
\frac{1}{\epsilon}P^c_{H_\delta}(L_s+JH_\delta^s){\vec{\tilde{\phi}}}
_s+P^c_{H_\delta}\dot{P}^d_s{\vec{\tilde{\phi}}}_s  \\
&\phantom{a} -\epsilon P^c_{H_\delta}P_s^c\frac{d}{ds}(L_s^{-1}{P_s^c\partial_s
\vec{\psi}_{E_{},s}})
\\&\phantom{a}-\frac{1}{\epsilon}P^c_{H_\delta}P_s^cN(\gsvs,\varepsilon
L_s^{-1}P_s^c\frac{d}{ds}\vec{\psi}_{E_{},s}+{\vec{\tilde{\phi}}}_s)\}.
\end{align*}}
Collect the estimates above to conclude that, for $s \in [s^*,\ \tau]$ and for all $\eps \ll 1$,
\begin{align*}
\| \distt \Hls \leq C_{2}  \eps^{2}+C_{1}\delta(s^*,s) \sup_{s^{*}\leq s' \leq s^{*}+\tau^{*}} \| \dists \Hls\, .
\end{align*}
Next we choose a (possibly small) $\tau^*$ such that 
\begin{align}
C_{1}\delta(s^*,s^*+\tau^*)\leq \frac{1}{2}.\label{choiceTau}
\end{align}
Then it follows that for $\tau^{**}=min\{s^*+\tau^*,\ \tau\}$
 \begin{align} \label{finallocest}
\sup_{s^{*}\leq s' \leq \tau^{**}} \|\dists \Hls \leq 2C_{2} \eps^2\, .
\end{align}

Next we estimate $\| \dists \H$.
We estimate \eqref{1initial}-\eqref{1nonlin} in the norm $\| \cdot \H$ as in the proof of Proposition \ref{gpshortestlem}, with the help of \eqref{finallocest}, to find
\begin{align}\label{finalglobest}
 \sup_{s^{*}\leq s' \leq s^{*}+\tau^{*}} \|\dists \H \lesssim C_{s^{*}} \eps\, .
\end{align}

It is not hard to see that \eqref{finallocest}, \eqref{finalglobest} imply the desired \eqref{finalimprovement1}, \eqref{finalimprovement2}, if we prove $\tau^{**}=s^*+\tau^*$. 

This is indeed true. By the local wellposedness of the solution, the bootstrap assumptions $(\mathrm{B}_{\mathrm{l}}$, $\mathrm{B}_{\mathrm{g}})$ resp. $(\mathrm{B}_{\mathrm{l}}'$, $\mathrm{B}_{\mathrm{g}})$ hold for a maximal subinterval $[s^{*}, s^{*}+\tau]\subset[s^{*}, s^{*}+\tau^{*}]$, where $\tau$ a priori depends on $\eps$. We claim that $\tau=\tau^{*}$: If we assume $\tau<\tau^{*}$, then the estimates \eqref{finallocest}, \eqref{finalglobest}, which are better than $(\mathrm{B}_{\mathrm{l}}$, $\mathrm{B}_{\mathrm{g}})$ resp. $(\mathrm{B}_{\mathrm{l}}'$, $\mathrm{B}_{\mathrm{g}})$, still hold in $[0,\tau]$. But the local wellposedness implies that the bootstrap assumptions also hold on a bigger interval (for $\eps \ll 1$). This contradicts the maximality of $\tau$.
\end{proof}

\appendix


\section{Proof of Theorem \ref{cor1} }\label{sec:B}

The estimate \eqref{goldbergest} can be found in \cite{Goldberg:2006wz}. 

The estimate \eqref{propest} follows from
\begin{align}
\|\e^{-\i\frac{t}{\eps}H_{\tau}}P_{H_{\tau}}^{c}\chi \H &\simeq \|\e^{-\i\frac{t}{\eps}H_{\tau}}P_{H_{\tau}}^{c}\chi \|_2 + \|H_\tau \e^{-\i\frac{t}{\eps}H_{\tau}}P_{H_{\tau}}^{c}\chi \|_2 \nonumber  \\
& \simeq \|\e^{-\i\frac{t}{\eps}H_{\tau}}P_{H_{\tau}}^{c}\chi \|_2 + \|\e^{-\i\frac{t}{\eps}H_{\tau}} H_\tau P_{H_{\tau}}^{c}\chi \|_2 \nonumber \\ 
&\simeq \|P_{H_{\tau}}^{c}\chi \|_2+\| H_\tau P_{H_{\tau}}^{c}\chi \|_2 \nonumber \\
&\simeq \|P_{H_{\tau}}^{c}\chi \H \nonumber \\
& \simeq \|P_{H_{\tau}}^{c}\chi\|_2+\|P_{H_{\tau}}^{c}H_{\tau}\chi\|_{2} \lesssim \|\chi\H \label{estunitary} \, .
\end{align}
By inspection we see that all multiplicative constants can be chosen to be independent of $\tau$ due to $(\mathrm{H}_{\mathrm{d}})$.

Next we prove \eqref{dispest}.
By $ \|\japx^{-\sigma} \chi \|_2 \leq \| \japx^{-\sigma}\|_2 \| \chi \|_\infty \lesssim \|\chi\|_{\infty}$ for any $\sigma >2$ (recall $(\mathrm{H}_{\mathrm{d}})$) we obtain
\begin{align}
\| \e^{-\i\frac{t}{\eps}H_{\tau}}P_{H_\tau}^c \chi \Hls &\simeq \| \japx^{-\sigma}\e^{-\i\frac{t}{\eps}H_{\tau}}P_{H_\tau}^c \chi \|_2 + \| \japx^{-\sigma} H_\tau \e^{-\i\frac{t}{\eps}H_{\tau}}P_{H_\tau}^c \chi \|_2 \nonumber \\
&\simeq \| \japx^{-\sigma} \e^{-\i\frac{t}{\eps}H_{\tau}} P_{H_\tau}^c \chi \|_2 + \| \japx^{-\sigma}  \e^{-\i\frac{t}{\eps}H_{\tau}}P_{H_\tau}^c H_\tau \chi \|_2 \nonumber \\ 
& \lesssim \| \e^{-\i\frac{t}{\eps}H_{\tau}} P_{H_\tau}^c \chi \|_\infty + \|   \e^{-\i\frac{t}{\eps}H_{\tau}}P_{H_\tau}^c H_\tau \chi \|_\infty \nonumber \\
&\lesssim \left| \frac{t}{\eps} \right|^{-\frac{3}{2}} \left( \| \chi \|_1 + \| H_\tau \chi \|_1 \right) \nonumber \\
&\lesssim  \left| \frac{t}{\eps}\right|^{-\frac{3}{2}} \| \chi \|_{W^{2,1}}\, . \label{estdispersive}
\end{align}
This together with \eqref{estunitary} yields the desired estimate \eqref{dispest}.

The uniformity of the constant $A$ follows from compactness of $[0,1]$ and the following lemma.

\begin{lem}
Consider $H_{0}=-\Delta+V_{0}$ with $V_{0}\in H^{2,\sigma}(\R^{3})$ admitting no zero energy resonance, thus
\begin{align*}
\|\e^{-\i H_{0}t}P_{H_{0}}^{c}\phi\Hls \leq C_{0} \langle t \rangle ^{-\frac{3}{2}} \|\phi\|_{H^{2}\cap W^{2,1}}.
\end{align*}
Then for $H=-\Delta+V$, $V\in H^{2,\sigma}(\R^{3})$, $\|V-V_{0}\|_{H^{2,\sigma}}$ sufficiently small, it holds that 
\begin{align*}
\|\e^{-\i Ht}P_{H}^{c}\phi\Hls \leq C \langle t \rangle ^{-\frac{3}{2}} \|\phi\|_{H^{2}\cap W^{2,1}},
\end{align*}
where $C$ can be chosen such that $C\to C_{0}$ as $\|V-V_{0}\|_{H^{2,\sigma}}\to 0$.
\end{lem}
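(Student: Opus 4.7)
The plan is to treat $H = H_0 + W$, with $W := V - V_0$, as a perturbation of $H_0$ and iterate Duhamel's formula using the dispersive estimate already supplied for $H_0$. A preliminary step verifies that the spectral structure persists: absence of zero energy resonance is an open condition in the $H^{2,\sigma}$-topology, and since the single eigenvalue of $H_0$ is isolated by a fixed gap, it persists for $H$ with a slightly perturbed eigenstate $v_{*,H}$. The Riesz formula $P^{d}_H = -\frac{1}{2\pi \i}\oint_{\Gamma}(H-z)^{-1}dz$ combined with the second resolvent identity $(H-z)^{-1} - (H_0-z)^{-1} = -(H-z)^{-1}W(H_0-z)^{-1}$ then yields the quantitative continuity
\begin{equation*}
\|P_H^d - P_{H_0}^d\|_{L^2\to H^{2,-\sigma}} \lesssim \|W\|_{H^{2,\sigma}},
\end{equation*}
and in particular $v_{*,H} \to v_{*,H_0}$ in the relevant weighted norms as $\|W\|_{H^{2,\sigma}}\to 0$.

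Next I would write the Duhamel expansion
\begin{equation*}
\e^{-\i Ht}P_H^c\phi = \e^{-\i H_0 t}P_H^c\phi - \i\int_0^t \e^{-\i H_0(t-s)}\, W\, \e^{-\i Hs}P_H^c\phi\,ds,
\end{equation*}
and insert $I = P_{H_0}^c + P_{H_0}^d$ on the right of each $\e^{-\i H_0\cdot}$ factor. The $P_{H_0}^c$-pieces inherit the decay $C_0\langle t-s\rangle^{-3/2}$ from the hypothesis. The $P_{H_0}^d$-pieces factor through the smooth, rapidly decaying $v_{*,H_0}$, whose $H^{2,-\sigma}$-norm is bounded, and come with a prefactor $\|P_H^c - P_{H_0}^c\|$ via the identity $P_{H_0}^d P_H^c = P_{H_0}^d(P_{H_0}^d - P_H^d)$, hence are $O(\|W\|_{H^{2,\sigma}})$ by the preliminary step. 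For the perturbation factor $Wu$ in the integrand I would use the weighted product estimate
\begin{equation*}
\|W u\|_{H^2\cap W^{2,1}} \lesssim \|W\|_{H^{2,\sigma}}\|u\|_{H^{2,-\sigma}},
\end{equation*}
a straightforward Leibniz-rule consequence of redistributing the weight $\langle x\rangle^{\sigma}$ across the two factors (cf. the product estimates of Lemma~\ref{lem_norm}).

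To close the argument I would define $M(T) := \sup_{0\leq t\leq T}\langle t\rangle^{3/2}\|\e^{-\i Ht}P_H^c\phi\|_{H^{2,-\sigma}}$ and combine the bounds above with the integral inequality $\int_0^t \langle t-s\rangle^{-3/2}\langle s\rangle^{-3/2}ds \lesssim \langle t\rangle^{-3/2}$ (as in Lemma~\ref{integral}) to obtain
\begin{equation*}
M(T) \leq \bigl(C_0 + c_1\|W\|_{H^{2,\sigma}}\bigr)\|\phi\|_{H^2\cap W^{2,1}} + c_2\|W\|_{H^{2,\sigma}}\, M(T),
\end{equation*}
with $c_1,c_2$ independent of $\phi$, $T$, and $W$. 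For $\|W\|_{H^{2,\sigma}}$ sufficiently small the last term can be absorbed, yielding the claim with constant $C = C_0 + O(\|W\|_{H^{2,\sigma}})$, which tends to $C_0$ as $\|W\|_{H^{2,\sigma}}\to 0$. The main technical obstacle is the preliminary step: obtaining the quantitative estimate $\|P_H^d - P_{H_0}^d\|_{L^2\to H^{2,-\sigma}} = O(\|W\|_{H^{2,\sigma}})$ uniformly in $z$ on the Riesz contour $\Gamma$. This requires controlling both $(H-z)^{-1}$ and $(H_0-z)^{-1}$ as operators between the relevant weighted spaces uniformly in $z\in\Gamma$, for which one uses the preservation of the spectral gap of $H_0$ under the perturbation $W$ together with the relative compactness of $W$ stemming from the decay encoded in $V,V_0 \in H^{2,\sigma}(\mathbb{R}^3)$ with $\sigma>2$.
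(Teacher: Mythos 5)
Your overall strategy (Duhamel against $H_0$, bootstrap the weighted decay norm, show the perturbative pieces proportional to $\|V-V_0\|_{H^{2,\sigma}}$) mirrors the paper's, but the way you split the free evolution does not close. After inserting $I=P_{H_0}^c+P_{H_0}^d$ to the right of $\e^{-\i H_0 t}$ (or $\e^{-\i H_0(t-s)}$), the $P_{H_0}^d$-pieces carry no time decay at all: $\e^{-\i H_0 t}P_{H_0}^d$ is a pure phase $\e^{\i E_{*,0}t}P_{H_0}^d$. Your identity $P_{H_0}^dP_H^c=P_{H_0}^d(P_{H_0}^d-P_H^d)$ and the factor $W$ indeed make these pieces $O(\|V-V_0\|_{H^{2,\sigma}})$, but only in a time-\emph{uniform} sense; they never acquire a $\langle t\rangle^{-3/2}$ or $\langle t-s\rangle^{-3/2}$ factor. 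So when you try to close the inequality for $M(T)=\sup_{t\le T}\langle t\rangle^{3/2}\|\e^{-\i Ht}P_H^c\phi\|_{H^{2,-\sigma}}$ you are actually facing a term like $O(\|W\|)\,\langle T\rangle^{3/2}\|\phi\|$, not $O(\|W\|)\,\|\phi\|$, and the bootstrap does not close uniformly in $T$. (In the integral piece the same problem appears: $\int_0^t \langle s\rangle^{-3/2}\,ds$ is merely bounded, not $\lesssim\langle t\rangle^{-3/2}$.) Salvaging this would require a genuinely new idea, e.g.\ integrating by parts in the oscillatory phase $\e^{\i E_{*,0}(t-s)}$, which you do not carry out and which is not straightforward here.

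The paper avoids this by never re-introducing the full unitary group $\e^{-\i H_0 t}$ on the range of $P_H^c$. It first replaces $\e^{-\i Ht}P_H^c$ by $\e^{-\i P_H^c HP_H^c t}P_H^c$, compares it via Duhamel to $\e^{-\i P_H^c H_0 P_H^c t}P_H^c$, and then, in a second Duhamel comparison, relates $\e^{-\i P_H^c H_0 P_H^c t}$ back to $\e^{-\i H_0 t}$. In that second step the would-be non-dispersive contribution comes from $P_H^d H_0 P_H^c$, and the algebraic identity $P_H^d H P_H^c=0$ turns it into $P_H^d(V_0-V)P_H^c=O(\|V-V_0\|_{H^{2,\sigma}})$; moreover one measures $P_{H_0}^c\phi_t$ and controls $(P_{H_0}^c-P_H^c)\phi_t$ as an $O(\delta)$ error, so the dispersive estimate is only ever invoked on the $P_{H_0}^c$-part, and every surviving term carries both the smallness \emph{and} the time decay. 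That is the ingredient missing from your argument. Your preliminary step on the persistence and $H^{2,-\sigma}$-continuity of the projections is fine and is used implicitly in the paper's proof as well.
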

\begin{proof} To simplify the notation, $\delta>0$ will denote a generic quantity which tends to zero as \mbox{$\|V-V_{0}\|_{H^{2,\sigma}}\to 0$}. By Duhamel's formula
\begin{align}\label{dispersiveduhamel}
e^{-\i Ht}P_{H}^{c}\phi=e^{-\i P_{H}^{c}HP_{H}^{c}t}P_{H}^{c}\phi=&e^{-\i P_{H}^{c}H_{0}P_{H}^{c}t}P_{H}^{c}\phi \nonumber \\&\hspace{.1in}-\i\int_{0}^{t}ds \ e^{-\i P_{H}^{c}H_{0}P_{H}^{c}(t-s)}P_{H}^{c}(V-V_{0})P_{H}^{c}e^{-\i P_{H}^{c}HP_{H}^{c}s}P_{H}^{c}\phi \, .
\end{align}
{\itshape{Claim:}} $\|\e^{-\i P^{c}_{H}H_{0}P^{c}_{H}t}P^{c}_{H}\phi\Hls \leq \langle t \rangle^{-\frac{3}{2}}(C_{0}+\delta)\|\phi\|_{H^{2}\cap W^{2,1}}$.
\vspace{.1in}

\noindent We first show that the claim implies the lemma. With Lemma~\ref{lem_norm} we have
\begin{align*}
\|P_{H}^{c}(V-V_{0})\langle x \rangle^{\sigma}\langle x \rangle^{-\sigma}P_{H}^{c}e^{-\i P_{H}^{c}HP_{H}^{c}s}P_{H}^{c}\phi \|_{H^{2}\cap W^{2,1}} \leq \delta \|e^{-\i P_{H}^{c}HP_{H}^{c}s}P_{H}^{c}\phi \Hls
\end{align*}
 and thus estimating (\ref{dispersiveduhamel}) for all $t \leq t^{*}$ we obtain
\begin{align*}
\langle t \rangle ^{\frac{3}{2}} \|\e^{-\i Ht}P_{H}^{c}\phi\Hls \leq (C_{0}+\delta) \|\phi\|_{H^{2}\cap W^{2,1}}+ \delta\langle t \rangle^{\frac{3}{2}}\int_{0}^{t^{*}} &ds (C_{0}+\delta)\langle t - s \rangle^{-\frac{3}{2}}\langle s \rangle^{-\frac{3}{2}}  \times \\
&\times \sup_{s\leq t^{*}}\langle s \rangle^{\frac{3}{2}}\|e^{-\i P_{H}^{c}HP_{H}^{c}s}P_{H}^{c}\phi \Hls \, .
\end{align*}
The lemma now follows from 
\begin{align*}
\int_{0}^{\infty} ds \langle t-s \rangle^{-\frac{3}{2}}\langle s \rangle^{-\frac{3}{2}} \leq D \langle t \rangle^{-\frac{3}{2}}
\end{align*}
for a numerical constant $D$. This is proved as in Lemma~\ref{integral}.

 To prove the claim we define $\phi_{t}:=\e^{-\i P^{c}_{H}H_{0}P^{c}_{H}t}P^{c}_{H}\phi$ and apply Duhamel's formula again,
\begin{align*}
\phi_{t} &= \e^{-\i H_{0}t}P_{H}^{c}\phi+\i \int_{0}^{t}ds \ \e^{-\i H_{0}(t-s)}P_{H}^{d}H_{0}P_{H}^{c}\phi_{s} 
\end{align*}
It holds that
\begin{align*}
\|\phi_{t}\Hls=\|P_{H}^{c}\phi_{t}\Hls\leq& \|(P_{H_{0}}^{c}-P_{H}^{c})\phi_{t}\Hls + \|P_{H_{0}}^{c}\phi_{t}\Hls \\
 \leq& \delta \|\phi_{t}\Hls + \|P_{H_{0}}^{c}\phi_{t}\Hls\, ,
\end{align*}
and the same is true if $H^{2,-\sigma}(\R^{3})$ is replaced by $H^{2}(\R^{3})\cap W^{2,1}(\R^{3})$. The second inequality can be proved with the Riesz formula and certain resolvent estimates. It follows that it is sufficient to estimate
\begin{align*}
P_{H_{0}}^{c}\phi_{t}&= \e^{-\i H_{0}t}P_{H_{0}}^{c}P_{H}^{c}\phi+\i \int_{0}^{t}ds \ \e^{-\i H_{0}(t-s)}P_{H_{0}}^{c}P_{H}^{d}H_{0}P_{H}^{c}\phi_{s} \\
&=\e^{-\i H_{0}t}P_{H_{0}}^{c}\phi+\e^{-\i H_{0}t}P_{H_{0}}^{c}(P_{H}^{c}-P_{H_{0}}^{c}) \phi+\i \int_{0}^{t}ds \ \e^{-\i H_{0}(t-s)}P_{H_{0}}^{c}P_{H}^{d}(V_{0}-V)P_{H}^{c}\phi_{s} \, .
\end{align*}
Hence for all $t\leq t^{*}$
\begin{align*}
\langle t \rangle^{\frac{3}{2}}\|\phi_{t}\Hls &\leq (1+\delta)\langle t \rangle^{\frac{3}{2}}\|P_{H_{0}}^{c}\phi_{t}\Hls \\
& \leq (C_{0}+\delta) \|\phi \|_{H^{2}\cap W^{2,1}} \\
& \hspace{.5in} +(1+\delta) \langle t \rangle^{\frac{3}{2}}\int_{0}^{t}ds \langle t-s \rangle^{-\frac{3}{2}}\langle s \rangle^{-\frac{3}{2}}\langle s \rangle^{\frac{3}{2}}\|(V_{0}-V)\langle x \rangle^{\sigma}\langle x \rangle^{-\sigma}P_{H}^{c}\phi_{s}\|_{H^{2}\cap W^{2,1}} \\
&\leq (C_{0}+\delta) \|\phi \|_{H^{2}\cap W^{2,1}} + \delta D \sup_{s\leq t^{*}} \langle s \rangle^{\frac{3}{2}}\|\phi_{s}\Hls
\end{align*}
This proves the claim.
\end{proof}


\comment{
Ultimately, we show that for every $\eps>0$ there is a $\delta>0$ such that if
\begin{align*}
\|\e^{-\i\frac{t}{\eps}H_{\tau}}P^c_{H_\tau}\chi\Hls & \leq A_{\tau} \langle \frac{t}{\eps} \rangle^{-\frac 3 2}\|\chi\|_{{H^2}\cap W^{2,1}}\,
\end{align*}
then $A_{\tau+\lambda}$ can be chosen such that $|A_{\tau+\lambda}-A_{\tau}|<\eps$ if $|\lambda|<\delta$. The claim for $A$ then follows by compactness of $[0,1]$.
Now
\begin{align*}
 \e^{-\i\frac{t}{\eps}H_{\tau+\lambda}}P_{H_{\tau+\lambda}}^c \chi
\end{align*}
\tt{to be done!!} Duhamel argument!}

\section{Proof of Lemma \ref{normequiv}}\label{sec:C}

\begin{proof}
Obviously, $\|P^c_{H_0}\distt \|_{2} \leq \|\distt \|_{2}$ and $\|H_0 P^c_{H_0}\distt \|_{2}=\|P^c_{H_0}H_0\distt \|_{2} \leq \|H_0 \distt \|_2$ imply
\begin{align*}
\|  P^c_{H_0}\distt\|_{2}+\| P^c_{H_0}H_0\distt \|_{2}    \simeq  \|P^c_{H_0}\distt \|_{H^2} \lesssim \| \distt \|_{H^2}\, .
\end{align*}
By the definition of $P_{H_0}^{d}$ and that the eigenvectors of $-\Delta+V_0$ decay rapidly at $|x|=\infty,$ we have
\begin{align*}
\|\japx^{-\sigma} P^d_{H_0}\distt\|_2 \lesssim \| \japx^{-\sigma} \distt\|_2 \, ,
\end{align*}
and similarly, 
\begin{align*}
\| \japx^{-\sigma} H_{0} P_{H_0}^d \distt \|_2 =  \| \japx^{-\sigma}  P_{H_0}^d H_{0} \distt \|_2 \lesssim \|\dist \Hls \, ,
\end{align*}
whence $\|P_{H_{0}}^{c}\distt \Hls \lesssim \|\distt \Hls$. To show the converse inequalities note that for both norms
\begin{align*}
\|\distt \| \leq \| P_{H_{0}}^{c}\distt\| + \|\left( P_{s}-P_{H_{0}}^{c} \right)\distt\|\, .
\end{align*}
Therefore it suffices to show that $\|P_{s}-P_{H_{0}}^{c}\|_{H^{2}\to H^{2}}$ resp. $\|P_{s}-P_{H_{0}}^{c}\|_{H^{2,-\sigma}\to H^{2,-\sigma}}$ can be made arbitrarily small by choosing $s_{0}, \eta, \eps$ suitably. By the second resolvent formula we obtain
\begin{align*}
\|P_{s}-P_{H_{0}}^{c}\|\lesssim \oint_{\Gamma}dz \ \| (JL_{s}-z)^{-1}\| \|JL_{s}-H_{0}\| \| (H_{0}-z)^{-1} \|     \, ,
\end{align*}
$\Gamma$ as above. Observe that for arbitrary $\delta>0$ we can achieve
\begin{align*}
\|JL_{s}-H_{0}\| \lesssim \delta
\end{align*}
in both operator norms as a consequence of (\ref{chapgp:diffexpre}) and (\ref{est:locality}). Furthermore the norms of $(L_{s}-z)^{-1}$ and $(H_{0}-z)^{-1}$ are both uniformly bounded on $H^{2,-\sigma}(\R^{3})$ (the case $H^{2}(\R^{3})$ is easier). This concludes the proof.
\end{proof}

\section{Proof of Lemma \ref{est:locnorm}}\label{sec:D}

\begin{proof}
By Lemma \ref{normequiv} we have $$\|P_{H_{0}}^{c}(L_{s}+JH_{0})\dist\H \lesssim \|(L_{s}+JH_{0})\dist \H,$$ and $$\|P_{H_{0}}^{c}(L_{s}+JH_{0})\dist\|_{W^{2,1}} \lesssim \|(L_{s}+JH_{0})\dist \|_{W^{2,1}}.$$ Then, by means of Estimate~(\ref{est:locality})
\begin{align*}
\|(L_{s}+JH_{0})\japx^{\sigma}\japx^{-\sigma}\dist\H  \lesssim \delta \|\dist \Hls \, , \\
\|(L_{s}+JH_{0})\japx^{\sigma}\japx^{-\sigma}\dist\|_{W^{2,1}}  \lesssim \delta \|\dist \Hls\, .
\end{align*}
The proof of (\ref{secondlin}) is follows from similar arguments using the explicit expression for $P_{s}^{d}$, Equation~(\ref{eq:proj}). Ultimately, to prove (\ref{thirdlin}) we note that since \newline\mbox{$\frac{d}{dt}\gs , \frac{d}{dt}\partial_{E}\gs  \in H^{2,\sigma}(\R^{3})$} it holds that
\begin{align*}
\|P_{s}^{d}\dot{P}_{s}^{d}\dist\|_{H^{2}\cap W^{2,1}} \lesssim \|\dot{P}_{s}^{d}\dist\Hls \lesssim  \|\dist \Hls \, .
\end{align*}
\end{proof}

\def\cprime{$'$} \def\cprime{$'$} \def\cprime{$'$} \def\cprime{$'$}
  \def\cprime{$'$} \def\cprime{$'$}


\end{document}